\documentclass{article}
\usepackage{amssymb,amsfonts,amsmath,amsthm,amsopn,amstext,amscd,latexsym,xy,color,stmaryrd}
\usepackage{hyperref}
\theoremstyle{plain}
\input xy
\xyoption{all}
\setlength{\textheight}{8.75in}
\setlength{\textwidth}{6.5in}
\setlength{\topmargin}{0.0in}
\setlength{\headheight}{0.0in}
\setlength{\headsep}{0.0in}
\setlength{\leftmargin}{0.0in}
\setlength{\oddsidemargin}{0.0in}
\setlength{\parindent}{3pc}
\newtheorem{introtheorem}{Theorem}
\newtheorem{theorem}{Theorem}[section]
\newtheorem{lemma}[theorem]{Lemma}
\newtheorem{proposition}[theorem]{Proposition}

\newtheorem{definition}[theorem]{Definition}

 \numberwithin{equation}{section}

\def\mf#1{\mathfrak{#1}}

\def\mb#1{\mathbb{#1}}
\def\tx#1{\textrm{#1}}

\def\Q{\mathbb{Q}}

\def\Z{\mathbb{Z}}

\def\rw{\rightarrow}

\def\lrw{\longrightarrow}

\def\sm{\smallsetminus}

\def\<{\langle}
\def\>{\rangle}

\renewcommand{\-}{\hyp{}}

\renewcommand{\)}{\textup{)}}

\DeclareMathOperator{\Hom}{Hom}

\DeclareMathOperator{\Ad}{Ad}
\DeclareMathOperator{\rank}{rank}

\DeclareMathOperator{\Gal}{Gal}

\DeclareMathOperator{\Aut}{Aut}
\DeclareMathOperator{\Stab}{Stab}

\DeclareMathOperator{\Lie}{Lie}

\DeclareMathOperator{\Pic}{Pic}

\DeclareMathOperator{\tr}{tr}

\DeclareMathOperator{\Spec}{Spec}

\newcommand{\cL}{{\mathcal L}}

\newcommand{\cO}{{\mathcal O}}

\newcommand{\cS}{{\mathcal S}}

\newcommand{\fre}{{\mathfrak e}}

\newcommand{\frg}{{\mathfrak g}}
\newcommand{\frh}{{\mathfrak h}}

\newcommand{\frl}{{\mathfrak l}}

\newcommand{\frp}{{\mathfrak p}}

\newcommand{\frs}{{\mathfrak s}}
\newcommand{\frt}{{\mathfrak t}}

\newcommand{\bbC}{{\mathbb C}}

\newcommand{\bbF}{{\mathbb F}}
\newcommand{\bbG}{{\mathbb G}}

\newcommand{\bbP}{{\mathbb P}}
\newcommand{\bbQ}{{\mathbb Q}}
\newcommand{\bbR}{{\mathbb R}}

\newcommand{\bbZ}{{\mathbb Z}}

\newcommand{\GL}{{\mathrm{GL}}}

\newcommand{\Sp}{{\mathrm{Sp}}}
\newcommand{\PSp}{{\mathrm{PSp}}}

\title{Arithmetic invariant theory and 2-descent for plane quartic curves}
\author{Jack A. Thorne\footnote{This research was partially conducted during the period the author served as a Clay Research Fellow.}}

\begin{document}
\maketitle
\begin{abstract}
Given a smooth plane quartic curve $C$ over a field $k$ of characteristic 0, with Jacobian variety $J$, and a marked rational point $P \in C(k)$, we construct a reductive group $G$ and a $G$-variety $X$, together with an injection $J(k)/2J(k) \hookrightarrow G(k) \backslash X(k)$. We do this using the Mumford theta group of the divisor $2 \Theta$ of $J$, and a construction of Lurie which passes from Heisenberg groups to Lie algebras.
\end{abstract}
\setcounter{tocdepth}{2}
\tableofcontents

\section*{Introduction}

\paragraph*{Motivation.} Let $C$ be a smooth, projective, geometrically connected algebraic curve over a field $k$ of characteristic 0, and let $J$ denote its Jacobian variety. It is of interest to calculate the group $J(k) / 2 J(k)$. For example, when $k = \bbQ$, this is often the first step in understanding the structure of the finitely generated abelian group $J(\bbQ)$. Calculating the group $J(k) / 2 J(k)$ is known as performing a 2-descent.

In order to calculate $J(k) / 2 J(k)$, it is often very useful to be able to understand this group in terms of explicit objects in representation theory. This is particularly the case if one wishes to understand the behaviour of the groups $J(k) / 2 J(k)$ as the curve $C$ is allowed to vary. A famous example is the description of this group in terms of binary quartic forms, in the case where $C = J$ is an elliptic curve \cite{Bir63}. More recently, Bhargava, Gross and Wang have given a similar description in the case where $C$ is an odd hyperelliptic curve, i.e.\ a hyperelliptic curve with a marked rational Weierstrass point $P \in C(k)$ \cite{Bha12, Wan13}. In this case, the group $J(k) / 2 J(k)$ is understood in terms of equivalence classes of self-adjoint linear operators with fixed characteristic polynomial.

The aim of this paper is to give an invariant-theoretic description of the group $J(k) / 2 J(k)$ when $C$ is a non-hyperelliptic genus 3 curve with a marked rational point $P \in C(k)$. Such a curve is canonically embedded as a quartic curve in $\bbP^2_k$, which explains the title of this paper. The set of such pairs $(C, P)$ breaks up into 4 natural families, according to the behaviour of the projective tangent line to $C$ at $P$ (these are described below).

Our results can be summarized in broad terms as follows: for each family of curves, we obtain a reductive group $G$ over $k$, an algebraic variety $X$ on which $G$ acts, and for each pair $x = (C, P)$ defined over $k$, a closed $G$-orbit $X_x \subset X$ and a canonical injection
\[ J(k) / 2 J(k) \hookrightarrow G(k) \backslash X_x(k). \]
If $k$ is separably closed, then the set $G(k) \backslash X_x(k)$ has a single element. In general, the set $X_x(k)$ of $k$-rational points breaks up into many $G(k)$-orbits, which become conjugate over the separable closure. The set of $G(k)$-orbits can be described in terms of Galois cohomology, and this allows us to make a link with the theory of 2-descent.

Two of the spaces $X$ that we construct are in fact linear representations, and our results in these cases (although not our proofs) parallel those in \cite[\S 4]{Bha12}. Bhargava and Gross apply the results of \emph{loc. cit.} to understand the average size of the 2-Selmer group of the Jacobian of an odd hyperelliptic curve over $\bbQ$. We hope that our results will have similar applications in the future,  but we do not pursue the study of Selmer groups in this paper.

The other two spaces we construct are global analogues of Vinberg's $\theta$-groups, which have been previously studied from the point of view of geometric invariant theory by Richardson \cite{Ric82}. We wonder if they can have similar applications in arithmetic invariant theory, and if there are similar and simpler spaces which are related, for example, to elliptic curves.

\paragraph*{Description of main results.} We now describe more precisely what we prove in this paper. Let $k$ be a field of characteristic 0. We are interested in the arithmetic of all pairs $(C, P)$ over $k$, where $C$ is a smooth non-hyperelliptic curve of genus 3, and $P \in C(k)$ is a marked rational point. We break up such pairs into 4 families, corresponding to the behaviour of the projective tangent line $\ell = T_P C$ in the canonical embedding:
\begin{enumerate} \item[Case $E_7$:] $\ell$ meets $C$ at exactly 3 points (the generic case).
\item[Case $\fre_7$:] $\ell$ meets $C$ at exactly 2 points, with contact of order 3 at $P$ ($\ell$ is a flex).
\item[Case $E_6$:] $\ell$ meets $C$ at exactly 2 points, with contact of order 2 at $P$ ($\ell$ is a bitangent line).
\item[Case $\fre_6$:] $\ell$ meets $C$ at exactly 1 point ($\ell$ is a hyperflex).
\end{enumerate}
The name for each case indicates the semisimple algebraic group or Lie algebra inside which we will construct the variety $X$ described above. The definitions are as follows:
\begin{enumerate} \item[Case $E_7$:] Let $H$ be a split adjoint simple group of type $E_7$, and let $\theta : H \to H$ be a split stable involution (see Proposition \ref{prop_stable_involution} below). We define $G$ to be the identity component of the $\theta$-fixed group $H^\theta$, and $Y$ to be the connected component of the identity in the $\theta$-inverted set $H^{\theta(h) = h^{-1}}$. (Equivalently, $Y$ can be realized as the quotient $H/G$.)
\item[Case $\fre_7$:] Let $H$, $\theta$, and $G$ be as in case $E_7$. We define $V$ to be the tangent space to $Y$ at the identity, where $Y$ is as in case $E_7$. Then $V$ is a linear representation of $G$, and can be identified with the $-1$-eigenspace of $\theta$ in $\frh = \Lie H$.
\item[Case $E_6$:] Let $H$ be instead a split adjoint simple group of type $E_6$, and let $\theta : H \to H$ be a split stable involution. We define $G$ to be the identity component of the $\theta$-fixed group $H^\theta$, and $Y$ to be the connected component of the identity in the $\theta$-inverted set $H^{\theta(h) = h^{-1}}$.
\item[Case $\fre_6$:] Let $H$, $\theta$, and $G$ be as in case $E_6$. We define $V$ to be the tangent space to $Y$ at the identity, where $Y$ is as in case $E_6$. Equivalently, $V = \frh^{\theta = -1} \subset \frh$.
\end{enumerate}
In case $E_7$ or $E_6$, we let $X=Y$. In case $\fre_7$ or $\fre_6$, we let $X=V$. In each case the open subscheme $X^\text{s} \subset X$ of geometric stable orbits (i.e.\ closed orbits with finite stabilizers) is non-empty, and can be realized as the complement of a discriminant hypersurface. A Chevalley restriction theorem holds, and if $k$ is separably closed then two elements $x, y \in X^\text{s}(k)$ are $G(k)$-conjugate if and only if they have the same image in the categorical quotient $X  \! \sslash \! G$. (We remark that the quotients $V \! \! \sslash \! G$ are abstractly isomorphic to affine space. This is not so for the quotients $Y \! \sslash \! G$, although it would be so if in their definition we replaced the adjoint group $H$ by its simply connected cover.) The spaces $V$ are linear representations of $G$ of the type arising from Vinberg theory, and have been studied in the context of arithmetic invariant theory in e.g.\ \cite{Tho13}. The spaces $Y$ are a `global' analogue of the representations $V$.

Our first main result is the construction of a point of $G(k) \backslash X^s(k)$ which corresponds to the trivial element of the group $J(k) / 2 J(k)$:
\begin{introtheorem}\label{thm_intro_base_orbit}[Theorem \ref{thm_base_orbits}] \begin{enumerate} \item In case $E_7$ or $E_6$, let $\cS$ denote the functor $k\text{-alg} \to \text{Sets}$ which classifies pairs $(C, P)$, where $C$ is a smooth, non-hyperelliptic curve of genus 3, and $P$ is a point of $C$ as above. Then there is a canonical map
\[ \cS(k) \to G(k) \backslash Y^\text{s}(k). \]
If $k$ is separably closed, then this map is bijective.
\item In case $\fre_7$ or $\fre_6$, let $\cS$ denote the functor $k\text{-alg} \to \text{Sets}$ which classifies tuples $(C, P, t)$, where $C$ is a smooth non-hyperelliptic curve of genus 3, $P$ is a point of $C$ as above, and $t$ is a non-zero element of the Zariski tangent space of $C$ at $P$. Then there is a canonical map
\[ \cS(k) \to G(k) \backslash V^\text{s}(k). \]
If $k$ is separably closed, this map is bijective.
\end{enumerate}
\end{introtheorem}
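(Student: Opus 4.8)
The plan is to construct the orbit attached to $(C,P)$ (or $(C,P,t)$) directly from the Mumford theta group of the divisor $2\Theta$ on the Jacobian $J$, using Lurie's construction passing from Heisenberg groups to Lie algebras, and then to establish stability and the bijectivity over a separably closed field through the invariant theory of the categorical quotient $X \sslash G$; the overall shape of the argument parallels \cite[\S 4]{Bha12}. Given $(C,P)$, let $J$ be the Jacobian and $\cL$ the canonical symmetric line bundle on $J$ algebraically equivalent to $2\Theta$ (well-defined and symmetric, since any two symmetric theta divisors differ by translation by a $2$-torsion point). Then $W := H^0(J, \cL)$ has dimension $2^3 = 8$, and the theta group $\cG(\cL)$ is a central extension of $J[2]$ by $\bbG_m$ acting irreducibly on $W$, with commutator pairing the Weil pairing. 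As a symplectic form over a field is split, and using the symmetric structure of $\cL$, this datum is, after passage to a separable closure, the standard input of Lurie's construction, which accordingly returns the split group $H$ with its stable involution $\theta$, and hence $G$, $Y$, $V$; for type $E_7$, for instance, the $\theta$-decomposition reads $\frh = \frs\frl(W) \oplus \Wedge^4 W$ for a suitable trivialization of $\det W$. The marked point $P$ — recorded, say, through the Abel--Jacobi embedding $C \hookrightarrow J$, equivalently through the class $[K_C - 4P] \in J(k)$ measuring the tangent line $\ell = T_P C$, and in cases $E_6, \fre_6$ additionally through the theta characteristic supplied by the bitangent resp.\ hyperflex line — furnishes the rigidifying datum that Lurie's construction needs to output a canonical $G(k)$-orbit on $Y(k)$, i.e.\ an element $x(C,P)$ of $G(k) \backslash Y(k)$; in cases $\fre_7, \fre_6$ the vector $t \in T_P C$ trivializes the relevant line and moves the output into $V = T_e Y$, giving $x(C,P,t) \in G(k) \backslash V(k)$. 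Formation of $x$ commutes with base change, which is what canonicity means here, and this defines the map $\cS(k) \to G(k) \backslash X(k)$.

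Next I would show that $x$ lands in $X^s$. Since $X^s$ is the complement of the discriminant hypersurface (as recalled above), it suffices to compute the image of $x$ in $X \sslash G$ and to match its discriminant with (a unit times) the discriminant of the plane quartic $C$; smoothness of $C$ then forces $\disc(x) \neq 0$, so $x \in X^s$. Independently one records, for later use, that the theta group canonically identifies the stabilizer $\Stab_G(x)$ with $J[2]$, so that in particular it is finite; this will be the source of the eventual map from $J(k)/2J(k)$.

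Now assume $k$ separably closed. By the Chevalley restriction property recorded above, two stable elements are $G(k)$-conjugate if and only if they have the same image in $X \sslash G$, and $X^s(k) \to (X^s \sslash G)(k)$ is surjective, so $G(k) \backslash X^s(k) \to (X^s \sslash G)(k)$ is a bijection; it therefore remains to see that the composite $\cS(k) \to (X^s \sslash G)(k)$ is a bijection. For injectivity, one recovers $(C,P)$ from the invariants of $x(C,P)$ by identifying this composite with the $k$-points of the natural map from the moduli of smooth plane quartics carrying the relevant tangent-line data to $X^s \sslash G$, and checking that this map is a monomorphism on that locus. For surjectivity, one shows that every point of $X^s \sslash G$ arises as the invariant of some smooth $(C,P)$ (resp.\ $(C,P,t)$) of the correct tangent-line type; then $x(C,P)$ and any given stable orbit with the same invariant lie over the same point of $X \sslash G$ and are both stable, hence $G(k)$-conjugate.

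I expect the main obstacle to be the identification of the invariant theory of $X \sslash G$ with the classical invariant theory of plane quartic curves equipped with their tangent-line data, and in particular the matching of the two discriminants, since this simultaneously underlies the stability assertion and the bijectivity over a separably closed field, and it has to be carried out case by case for $E_7, \fre_7, E_6, \fre_6$. A secondary difficulty is the bookkeeping inside Lurie's construction: checking that the theta group of $(C,P)$ really produces the split $(H, \theta, G)$ rather than an inner twist, and that in cases $\fre_7, \fre_6$ the tangent vector $t$ is exactly the datum that linearizes the output.
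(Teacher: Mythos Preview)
Your proposal has a genuine gap at its central step: you never specify how the marked point $P$ produces a concrete element of $Y(k)$ or $V(k)$. Lurie's construction, applied to the Heisenberg extension, yields a triple $(\frh_0, \frt_0, \theta_0)$ together with a maximal torus $T_0 \cong \Hom(\Lambda, \bbG_m)$, but it does not single out any point of $T_0$. Your suggestion that $P$ enters ``through the Abel--Jacobi embedding'' or ``through the class $[K_C - 4P] \in J(k)$'' does not bridge this: those are points of the Jacobian, not homomorphisms $\Lambda \to \bbG_m$, and there is no evident passage from one to the other. The phrase ``furnishes the rigidifying datum that Lurie's construction needs to output a canonical $G(k)$-orbit'' is hiding precisely the construction that must be supplied.

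The paper fills this gap with a separate geometric input that you have omitted entirely: the del Pezzo surface $\pi : S \to \bbP^2_k$ branched along $C$. The root lattice is realised concretely as $\Lambda = K_S^\perp \subset \Pic(S_{k^s})$, and the tangent line $\ell = T_P C$ pulls back to an anticanonical curve $K = \pi^{-1}(\ell)$ in $S$. Restriction of line bundles gives a homomorphism $\Lambda \to \Pic^0(K)$, and the geometry of $K$ (nodal rational, cuspidal rational, two conics meeting transversely, or two tangent conics, according to the four cases $E_7, \fre_7, E_6, \fre_6$) identifies $\Pic^0(K)$ or its anti-invariant part with $\bbG_m$ or $\bbG_a$; this is Looijenga's construction, and it is what produces the specific point $\kappa_C$ of $T$ or $\frt$. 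The theta group plays a narrower role than you assign it: it gives Lurie's output a $k$-structure, and the $8$-dimensional space $W = H^0(J, \cL)$ is used only to certify that $\frg_0$, hence $H_0$, is \emph{split}, so that $\kappa_C$ can be transported into the fixed reference pair $(H, \theta)$ by an isomorphism determined up to $G(k)$-conjugacy. Stability of $\kappa_C$ and bijectivity for $k = k^s$ are then not obtained by the discriminant-matching you sketch, but are quoted directly from Looijenga's results (Theorem~\ref{thm_application_of_looijenga}), which already establish that $(C, P, \dots) \mapsto \kappa_C$ induces a bijection $\cS(k) \to (T_0^{\text{rss}} \sslash W)(k)$.
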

In any of the above cases, given $x \in \cS(k)$ corresponding to a tuple $(C, P, \dots)$, we write $J_x$ for the Jacobian of $C$ and $X_x \subset X$ for the geometric stable orbit containing the image of $x$, where again $X = Y$ in case $E_7$ or $E_6$, and $X = V$ in case $\fre_7$ or $\fre_6$. As noted above, $G(k)$ acts transitively on $X_x(k)$ if $k$ is separably closed, but in general this is not the case; instead, the orbits comprising $G(k) \backslash X_x(k)$ can be described in terms of Galois cohomology. Our main theorem shows how to construct orbits in $G(k) \backslash X_x(k)$ using rational points of $J_x(k)$:
\begin{introtheorem}\label{thm_intro_orbits}[Theorem \ref{thm_groups_and_descent}] Let notation be as above. Then there is a canonical injection $J_x(k)/2J_x(k) \hookrightarrow G(k) \backslash X_x(k)$. The image of the identity element of $J_x(k)$ is the image of $x$ under the map of Theorem \ref{thm_intro_base_orbit}.
\end{introtheorem}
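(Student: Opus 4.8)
The plan is to realize the injection $J_x(k)/2J_x(k) \hookrightarrow G(k)\backslash X_x(k)$ via a torsor construction, using the identification — established earlier in the paper and encapsulated in the construction of $G$ and $X$ from the Mumford theta group of $2\Theta$ — between the stabilizer of the base point $x_0 \in X_x(k)$ (the image of $x$ under Theorem \ref{thm_intro_base_orbit}) and a finite group scheme closely tied to $J_x[2]$. Concretely, write $Z_x$ for the scheme-theoretic stabilizer in $G$ of $x_0$. The first step is to recall (from the theta-group/Lurie construction) the canonical isomorphism of $Z_x$ with $J_x[2]$, or more precisely with an extension or twisted form thereof coming from the Heisenberg group — this is the structural input that makes everything work, and it is where the hypothesis that $x$ lies in $X^s_x$ (closed orbit, \emph{finite} stabilizer) is used. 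Since $k$ has characteristic $0$, $Z_x$ is \'etale, and by descent $G(k)\backslash X_x(k)$ injects into $\ker\bigl(H^1(k, Z_x) \to H^1(k, G)\bigr)$, with the orbit of $x_0$ going to the distinguished element; one checks this is a bijection onto that kernel when $H^0$ is suitably understood, but for the theorem only the injection of the orbit set into $H^1(k,Z_x)$ is needed.

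Next I would produce the map at the level of cohomology. The Kummer sequence for $J_x$ gives the descent map $\delta \colon J_x(k)/2J_x(k) \hookrightarrow H^1(k, J_x[2])$. Composing $\delta$ with the isomorphism $H^1(k, J_x[2]) \cong H^1(k, Z_x)$ from the previous paragraph lands us in $H^1(k, Z_x)$; the content of the theorem is then that this composite lands in the subset $\mathrm{image}\bigl(G(k)\backslash X_x(k)\bigr) \subseteq H^1(k, Z_x)$, equivalently that the image of $\delta$ dies in $H^1(k, G)$. This is the crux. I expect to prove it by exhibiting, for a point $Q \in J_x(k)$, an explicit $G$-torsor-with-$X$-structure: one uses the geometry of the situation — the interpretation of $X_x$ in terms of the variety $Y = H/G$ (resp.\ its tangent space $V$) together with the $2\Theta$-theta group acting on sections — to build a twisted form $x_Q \in X_x(k)$ whose class maps to $\delta(Q)$. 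The geometric ingredient here is that a rational $2$-torsion-type class on $J_x$ twists the marked line bundle $\mathcal{O}(2\Theta)$ in a way that still carries a theta structure, hence still produces (via Lurie's passage from Heisenberg groups to Lie algebras) a point of the \emph{same} orbit $X_x$, now over $k$ rather than only over $\bar k$.

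The main obstacle — and where I would spend the most effort — is the well-definedness and injectivity of $Q \mapsto x_Q$, i.e.\ showing the construction factors through $J_x(k)/2J_x(k)$ and that distinct classes give non-$G(k)$-conjugate points. Well-definedness modulo $2J_x(k)$ should follow because a point of the form $2R$ twists $\mathcal{O}(2\Theta)$ by something trivialized by $R$, returning $x_0$; injectivity should follow from the compatibility of the whole package with the commutator (Weil) pairing on $J_x[2]$ and the non-degeneracy built into the Heisenberg group, which forces the map $H^1(k, Z_x) \hookrightarrow H^1(k, G)$ to have kernel exactly matching the image of $\delta$ — this is the same mechanism by which, in the hyperelliptic case of \cite{Bha12, Wan13}, the orbit parametrization is faithful. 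A secondary technical point is checking that $x_Q$ actually lands in the \emph{stable} locus $X^s_x$ and not merely in $X_x$; this is automatic since $X_x$ is a single geometric orbit contained in $X^s$ by the choice made in Theorem \ref{thm_intro_base_orbit}. Finally, the normalization statement — that the identity of $J_x(k)$ maps to the image of $x$ — is immediate from the construction, since $Q = 0$ gives back the untwisted theta structure and hence $x_0$ itself.
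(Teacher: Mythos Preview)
Your strategy is the paper's: identify the stabilizer $Z_x \cong J_x[2]$, parametrize $G(k)\backslash X_x(k)$ by $\ker\bigl(H^1(k,Z_x) \to H^1(k,G)\bigr)$, and show the Kummer image $\delta(J_x(k)/2J_x(k))$ lands in this kernel by producing explicit $k$-rational twisted points $x_Q$. But two points in your write-up need correction.

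The substantive gap is at what you rightly call the crux: why does twisting by $Q$ produce a point of $X_x$ \emph{over $k$}? You assert that the twisted bundle ``still carries a theta structure, hence still produces \dots\ a point \dots\ over $k$'', but this hides the real argument. Given $A \in J_x(k)$ and $B$ with $[2]B = A$, the bundle $t_B^\ast\cL$ descends to some $\cL_B$ over $k$, and feeding the associated twisted extension $\widetilde{V}_B$ into Lurie's construction yields a triple $(H_B,\theta_B,T_B)$ over $k$ together with $\kappa_C \in T_B(k)$. A priori $H_B$ is only an inner form of $H$, so there is no reason for an isomorphism $H \cong H_B$ to exist over $k$. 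The paper's device is that $W_B = H^0(J_x,\cL_B)$ is an $8$-dimensional $k$-vector space on which $\frg_B$ acts; since $\frg_B$ is a form of $\frs\frl_8$ (resp.\ $\frs\frp_8$) possessing a $k$-rational $8$-dimensional representation, it must be split, forcing $H_B$ split, whence Proposition~\ref{prop_stable_involution} supplies $\varphi_B : H \to H_B$ over $k$ and one sets $x_Q = \varphi_B^{-1}(\kappa_C)$. Without this splitness argument your construction does not produce a $k$-point.

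Your injectivity argument is also off. You claim non-degeneracy of the Weil pairing forces $\ker\bigl(H^1(k,Z_x)\to H^1(k,G)\bigr)$ to coincide with the image of $\delta$; this is false in general (the table in \S\ref{sec_reals} has rows where these sets have sizes $8$ and $36$), and it is not needed. Injectivity is immediate once you verify that the orbit of $x_Q$ maps to $\delta(Q)$ under the injection $G(k)\backslash X_x(k) \hookrightarrow H^1(k,Z_x)$, since $\delta$ is already injective. The paper checks this cocycle identity directly via Lemmas~\ref{lem_descent_via_theta_groups} and~\ref{lem_functoriality_of_isomorphisms}: the isomorphism $F : \widetilde{V}_{k^s} \cong (\widetilde{V}_B)_{k^s}$ induced by $t_B^\ast$ satisfies $F^{-1}\,{}^\sigma F = {}^\sigma B - B$ under the identification $\Aut(\widetilde{V};V) \cong V^\vee \supset J_x[2]$.
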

We observe that the Jacobian $J_x$ depends only on the curve $C$, but the set $G(k) \backslash X_x(k)$ depends on the choice of auxiliary data; an analogous situation arises when doing 2-descent on the Jacobian of a hyperelliptic curve which has more than one $k$-rational Weierstrass point.
\paragraph*{Methods.}
The methods we adopt to prove Theorems 1 and 2 seem to be different to preceding work of a similar type. This reflects the fact that we are now in the territory of exceptional groups, whereas e.g.\ 2-descent on hyperelliptic curves can be understood using the invariant theory of Vinberg $\theta$-groups which are constructed inside classical groups (in fact, groups of type $A_n$).

Our starting point is a classical geometric construction. For concreteness, we describe what happens just in the case of type $E_6$. Let us therefore take a smooth, non-hyperelliptic curve $C$ over $\bbC$ of genus 3, and let $P \in C(\bbC)$ be a marked point where the projective tangent line in the canonical embedding is a bitangent line. The double cover $\pi : S \to \bbP^2$ branched  over $C$ is a del Pezzo surface of degree 2, and the strict transform of $\ell$ is union of two $(-1)$-curves; blowing down one of these, we obtain a smooth cubic surface $S$.

There is a well-known connection between cubic surfaces and the root system of type $E_6$: let $\Lambda = K_S^\perp \subset H^2(S, \bbZ)$ denote the orthogonal complement of the canonical class of $S$. Then $\Lambda$ is in fact a root lattice of type $E_6$. This does not immediately provide a relation with geometric invariant theory because there is no functorial construction of a reductive group from a root lattice.

However, Lurie \cite{Lur01} has observed that one can construct in a functorial way the  group $H$ corresponding to $\Lambda$ given the additional data of a \emph{double cover} of $V = \Lambda/2\Lambda$, i.e.\ a group extension
\begin{equation}\label{eqn_intro_lurie_data} \xymatrix@1{ 1 \ar[r] & \{ \pm 1 \} \ar[r] & \widetilde{V} \ar[r] & V \ar[r] & 1}
\end{equation}
satisfying some additional conditions; in particular, that the quadratic form $q : V \to \bbF_2$ corresponding to this extension agrees with the one derived from the natural quadratic form on $\Lambda$.

It turns out that the realization of the cubic surface $X$ using the plane quartic curve $C$ is exactly the data required for input into Lurie's construction. Indeed, let $J$ denote the Jacobian of the curve $C$. Then $J$ has a natural principal polarization $\Theta$, and associated to $\cL = 2\Theta$ is the Mumford theta group
\begin{equation}\label{eqn_intro_mumford_sequence} \xymatrix{ 1 \ar[r] & \{ \pm 1 \} \ar[r] & \widetilde{H}_\cL \ar[r] & J[2] \ar[r] & 1.}
\end{equation}
(More precisely, the Mumford theta group is a central extension of $J[2]$ by $\bbG_m$. The presence of the odd theta characteristic corresponding to the bitangent $\ell$ allows us to refine it to an extension by $\{ \pm 1 \}$.) We show that there is a canonical isomorphism $J[2] \cong \Lambda/2\Lambda$; pushing out the sequence (\ref{eqn_intro_mumford_sequence}) by this isomorphism, we obtain a sequence of type (\ref{eqn_intro_lurie_data}), to which Lurie's construction applies. We thus obtain from the data $(C, \ell)$ an algebraic group of type $E_6$. (We remark here that the isomorphism $J[2] \cong \Lambda / 2 \Lambda$ is well-known and classical; see for example \cite[IX, \S1]{Dol88}. We thank the anonymous referee for this reference.)

The principle underlying this paper is that the construction outlined above is sufficiently functorial that we can recover the arithmetic situation over any field $k$ of characteristic 0 simply by Galois descent. To construct the orbits whose existence is asserted by Theorem \ref{thm_intro_orbits}, we simply twist the extension (\ref{eqn_intro_mumford_sequence}). More precisely, we recall in \S \ref{sec_heisenberg_and_descent} below how a point of $J_x(k)$ gives rise to a twisted form of the Heisenberg group $\widetilde{H}_\cL$. We then construct additional orbits by applying our version of Lurie's construction to this twisted Heisenberg group.

\paragraph*{Other remarks.} There are some minor subtleties in our construction that we remark on now. One point is that in cases $\fre_6$, $\fre_7$, we associate orbits not to pairs $(C, P)$ but to triples $(C, P, t)$, where $t$ is a non-zero Zariski tangent vector at the point $P$. This reflects the fact that the space $X$ constructed in this case has an extra symmetry: it is a linear representation of the reductive group $G$, so we are free to multiply elements by scalars. This scaling corresponds to scaling the tangent vector $t$. A similar feature appears in the work of Bhargava--Gross \cite{Bha12}, where it allows one to `clear denominators' when working over $\bbQ$, and restrict to integral orbits.

Another point is that in the geometric construction sketched above, we associate a point to a pair $(C, \ell)$, and do not need the point $P$ which gives rise to the bitangent $\ell$. Of course, $\ell$ being fixed, there are exactly two possible choices of point $P$. It turns out that in each case, the data of the point $P$ is exactly the data required to rigidify the picture so that we obtain the expected bijection (as in Theorem 1) when $k$ is separably closed. This is an essential feature, since we rely heavily on Galois descent.

Our modified version of Lurie's construction associates to an appropriate extension $\widetilde{V}$ with action by the absolute Galois group of $k$ a triple $(\frh, \frt, \theta)$ consisting of a Lie algebra over $k$ of the correct Dynkin type, a Cartan subalgebra $\frt \subset \frh$, and a stable involution $\theta$ of $\frh$ which acts as multiplication by $-1$ on $\frt$. For arithmetic applications, we extend this construction in a surprising way: we show that a representation of the group $\widetilde{V}$ appearing in the extension (\ref{eqn_intro_lurie_data}), and on which $-1$ acts as multiplication by $-1$, gives rise to a representation of the $\theta$-fixed Lie algebra $\frh^{\theta}$.

The features of these constructions suggest that they should have an inverse, i.e.\ that given a tuple $(\frh, \frt, \theta)$ consisting of a simple Lie algebra $\frh$ over $k$, a Cartan subalgebra $\frt \subset \frh$ and an involution $\theta$ of $\frh$ which acts as $-1$ on $\frt$, one should be able to pass in the opposite direction to obtain a root lattice $\Lambda$ with $\Gamma_k$-action and an extension $\widetilde{V}$ of $V = \Lambda/2\Lambda$ of type (\ref{eqn_intro_lurie_data}). The existence of such an inverse has been shown by Tasho Kaletha, and appears in an appendix to this paper. He finds the group $\widetilde{V}$ inside the simply connected cover of the group $G = (H^\theta)^\circ$, where $H$ is the adjoint simple group over $k$ with Lie algebra $\frh$. In \S \ref{sec_reals}, we apply these results to calculate the number of orbits with given invariants in the case $k = \bbR$.
\paragraph*{Organization of this paper.}
In \S \ref{section_background} below, we recall some basic facts about quadratic forms, 2-descent for abelian varieties, and the invariant theory of the $G$-varieties under consideration here. In \S \ref{sec_group_with_involution} we describe our modifications to Lurie's constructions. In \S\ref{sec_plane_quartic_curves} we apply these constructions to the geometry of  plane quartics, in order to arrive at the results described in this introduction.  We conclude in \S \ref{sec_reals} with an explicit example in the case $k = \bbR$.
\paragraph*{Acknowledgements.}

I am grateful to Manjul Bhargava, Dick Gross, and Tasho Kaletha for many interesting conversations. I would like to thank Tasho again for writing the appendix to this paper. Finally, I thank the anonymous referee for their helpful comments.

\paragraph*{Notation.}

Throughout this paper, $k$ will denote a field of characteristic 0, and $k^s$ a fixed separable closure of $k$. We write $\Gamma_k = \Gal(k^s/k)$. If $X$ is a $k$-vector space or a scheme of finite type over $k$, then we write $X_{k^s}$ for the object obtained by extending scalars to $k^s$. If $X$ is a smooth projective variety over $k$, then we write $K_X$ for its canonical class. If $G, H, \dots$ are connected algebraic groups over $k$, then we use gothic letters $\frg, \frh, \dots$ to denote their Lie algebras. If $H$ is an algebraic group over $k$, then we write $H^1(k, H)$ for the continuous cohomology set $H^1(\Gamma_k, H(k^s))$, where $H(k^s)$ is endowed with the discrete topology. If $\theta$ is an involution of $H$, then we write $H^\theta$ for the closed subgroup of $H$ consisting of $\theta$-fixed elements, and $\frh^\theta$ for the Lie algebra of $H$ (equivalently, the $+1$-eigenspace of the differential of $\theta$ in $\frh$). We will make use of the equivalence between commutative finite $k$-groups and $\bbZ[\Gamma_k]$-modules of finite cardinality (given by $H \mapsto H(k^s)$).

By definition, a lattice $(\Lambda, \langle \cdot, \cdot \rangle)$ is a finite free $\bbZ$-module $\Lambda$ together with a symmetric and positive-definite bilinear form $\langle \cdot, \cdot \rangle : \Lambda \times \Lambda \to \bbZ$. We define $\Lambda^\vee = \{ \lambda \in \Lambda \otimes_\bbZ \bbQ \mid \langle \lambda, \Lambda \rangle \subset \bbZ \}$, which is naturally identified with $\Hom(\Lambda, \bbZ)$. We call $\Lambda$ a (simply laced) root lattice if it satisfies the following additional conditions:
\begin{itemize}
\item For each $\lambda \in \Lambda$, $\langle \lambda, \lambda \rangle$ is an even integer.
\item The set $\Gamma = \{ \lambda \in \Lambda \mid \langle \lambda, \lambda \rangle = 2 \}$ generates $\Lambda$ as an abelian group.
\end{itemize}
In this case, $\Gamma$ is a simply laced root system, each $\gamma \in \Gamma$ being associated with the simple reflection $s_\gamma(x) = x - \langle x, \gamma \rangle \gamma$. If $\Gamma$ is \emph{irreducible}, then it is a root system of type $A$, $D$, or $E$. In any case, we write $W(\Lambda) \subset \Aut(\Lambda)$ for the Weyl group of $\Gamma$, a finite group generated by the simple reflections $s_\gamma$, $\gamma \in \Gamma$.

In several places, we will consider central group extensions of the form
\[ \xymatrix@1{ 1 \ar[r] & \{ \pm 1 \} \ar[r] & \widetilde{E} \ar[r] & E \ar[r] & 1.} \]
If $\widetilde{e} \in \widetilde{E}$, then we will write $-\widetilde{e}$ for the element $(-1) \cdot \widetilde{e}$. We note that this is not necessarily equal to $\widetilde{e}^{-1}$. We write $e$ for the image of $\widetilde{e}$ in $E$.

\section{Background}\label{section_background}

We first recall some background material. For proofs of the results in \S\S \ref{sec_quadratic_forms}--\ref{sec_theta_characteristics}, we refer the reader to \cite{Gro04}.

\subsection{Quadratic forms over $\bbF_2$}\label{sec_quadratic_forms}

Let $V$ be a finite-dimensional $\bbF_2$-vector space, and let $\langle \cdot, \cdot \rangle : V \times V \to \bbF_2$ be a strictly alternating pairing.
\begin{definition} A quadratic refinement of $V$ is a function $q : V \to \bbF_2$ such that for all $v,$ $w \in V$, we have $\langle v, w \rangle = q(v + w) + q(v) + q(w)$.
\end{definition}
In general, there is no distinguished quadratic refinement of $V$. However, we have the following result.
\begin{proposition} Suppose that the pairing $\langle \cdot, \cdot \rangle$ is non-degenerate.
\begin{enumerate} \item Fix a decomposition $V = U \oplus U'$, where $U,$ $U'$ are isotropic subspaces of dimension $g \geq 0$. Then the function $q_{U, U'}(v) = \langle v_U, v_{U'} \rangle$ is a quadratic refinement. \(Here we write $v_U,$ $v_{U'}$ for the projections of $v \in V$ onto the two isotropic subspaces.\)
\item The set of quadratic refinements of $V$ is a principal homogeneous space for $V$, addition being defined by the formula $(v + q)(w) = q(w) + \langle v, w \rangle$.
\end{enumerate}
\end{proposition}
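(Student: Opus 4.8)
The plan is to prove all three assertions — that $q_{U,U'}$ is a quadratic refinement, that the displayed formula defines an action of $V$ on the set of refinements, and that this action is simply transitive — by direct manipulation of the defining identity $\langle v,w\rangle = q(v+w)+q(v)+q(w)$, using throughout that we work in characteristic $2$ and that a strictly alternating form is automatically symmetric (expand $\langle v+w,v+w\rangle = 0$).

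First I would treat part (1). Writing $v = v_U + v_{U'}$ and $w = w_U + w_{U'}$, bilinearity gives $(v+w)_U = v_U+w_U$ and $(v+w)_{U'} = v_{U'}+w_{U'}$. Expanding $q_{U,U'}(v+w) = \langle v_U+w_U, v_{U'}+w_{U'}\rangle$ into four terms, the two terms $\langle v_U,v_{U'}\rangle$ and $\langle w_U,w_{U'}\rangle$ cancel against $q_{U,U'}(v)$ and $q_{U,U'}(w)$, leaving $\langle v_U,w_{U'}\rangle + \langle w_U,v_{U'}\rangle$. On the other side, expanding $\langle v,w\rangle = \langle v_U+v_{U'}, w_U+w_{U'}\rangle$ and using that $U$ and $U'$ are isotropic removes the $\langle v_U,w_U\rangle$ and $\langle v_{U'},w_{U'}\rangle$ terms, leaving $\langle v_U,w_{U'}\rangle + \langle v_{U'},w_U\rangle$; symmetry of the form identifies this with the previous expression.

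For part (2) I would argue in three steps. \textbf{(i)} If $q$ is a refinement and $v \in V$, then $v+q$ is one: substituting $v+q$ into the defining identity, the $q$-contribution is $\langle w_1,w_2\rangle$, while the extra terms $\langle v,w_1\rangle + \langle v,w_2\rangle + \langle v,w_1+w_2\rangle$ vanish by linearity of $\langle v,\cdot\rangle$ in characteristic $2$. \textbf{(ii)} This is an action: $0+q = q$ is immediate, and $v_1+(v_2+q) = (v_1+v_2)+q$ follows again from linearity of $w \mapsto \langle v,w\rangle$. \textbf{(iii)} The action is simply transitive. The set of refinements is non-empty by part (1), using the standard fact that a non-degenerate alternating form over $\bbF_2$ admits a splitting into two isotropic subspaces of equal dimension. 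Freeness: if $v+q = q$ then $\langle v,w\rangle = 0$ for all $w$, so $v$ lies in the radical of the pairing, which is trivial by non-degeneracy. Transitivity: given refinements $q,q'$, the function $q'+q$ is additive — expanding $(q'+q)(w_1+w_2)$ through the defining identities for $q$ and $q'$, the two copies of $\langle w_1,w_2\rangle$ cancel — hence $\bbF_2$-linear, so $q'+q = \langle v,\cdot\rangle$ for some $v \in V$ because non-degeneracy makes $v \mapsto \langle v,\cdot\rangle$ an isomorphism onto $\Hom(V,\bbF_2)$; then $v+q = q'$.

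Each step is a short mechanical computation, so I do not expect a genuine obstacle. The step that comes closest to needing outside input is the non-emptiness in \textbf{(iii)}, which relies on the existence of a Lagrangian splitting of a symplectic $\bbF_2$-space; this is itself elementary, and in any case part (1) already exhibits a refinement for every non-degenerate $(V,\langle\cdot,\cdot\rangle)$.
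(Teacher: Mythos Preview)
Your argument is correct and complete. Each step is a straightforward computation exactly as you describe, and the only external ingredient --- the existence of a Lagrangian splitting of a non-degenerate symplectic $\bbF_2$-space --- is indeed elementary.

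Note, however, that the paper does not supply its own proof of this proposition: at the start of \S\ref{section_background} it refers the reader to \cite{Gro04} for the proofs of all results in \S\S\ref{sec_quadratic_forms}--\ref{sec_theta_characteristics}. So there is no proof in the paper to compare against; your direct verification stands on its own and is the standard argument one would expect.
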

\begin{definition} Suppose that the pairing $\langle \cdot, \cdot \rangle$ is non-degenerate, and let $q$ be a quadratic refinement of $V$. The Arf invariant $a(q) \in \bbF_2$ of $q$ is defined as follows. Fix a decomposition $V = U \oplus U'$ into isotropic subspaces of dimension $g \geq 0$. Let $\{ e_1, \dots, e_g \}$ be a basis of $U$, and let $\{ \epsilon_1, \dots, \epsilon_g \}$ denote the dual basis of $U'$. Then $a(q) = \sum_{i=1}^g q(e_i) q(\epsilon_i)$.
\end{definition}
\begin{lemma}\label{lem_arf_invariant} Suppose that the pairing $\langle \cdot, \cdot \rangle$ is non-degenerate, and let $\dim V = 2 g \geq 0$.
\begin{enumerate} \item The Arf invariant $a(q)$ is well-defined.
\item Let $\Sp(V)$ denote the group of automorphisms of the pair $(V, \langle \cdot, \cdot \rangle)$.  Then $\Sp(V)$ has precisely 2 orbits on the set of quadratic refinements of $V$, which are distinguished by their Arf invariants. The set of refinements with $a(q) = 0$ has cardinality $2^{g-1}(2^g + 1)$ and the set of refinements with $a(q) = 1$ has cardinality $2^{g-1}(2^g - 1)$.
\item If $q$ is a quadratic refinement and $v \in V$, then $a(q + v) = a(q) + q(v)$.
\end{enumerate}
\end{lemma}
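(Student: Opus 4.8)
The plan is to prove the three parts in order, since later parts lean on earlier ones. For part (1), I would show that $a(q)$ is independent of all the choices made: the basis of $U$, and the Lagrangian decomposition $V = U \oplus U'$ itself. The basis independence can be checked by tracking how $\sum_i q(e_i) q(\epsilon_i)$ transforms under a change of basis of $U$ (with the dual basis of $U'$ changing accordingly); here one uses the defining identity $q(v+w) = q(v) + q(w) + \langle v, w\rangle$ to expand $q$ on linear combinations, and the fact that we are working over $\bbF_2$ to kill cross terms that appear in pairs. For independence of the decomposition, the cleanest route is to reduce to the case $g = 1$ by an orthogonal-direct-sum argument (both $a(q)$ and the count of isotropic vectors behave additively over orthogonal direct sums of the pair $(V,\langle\cdot,\cdot\rangle)$), and then check the $2$-dimensional case by hand: there are exactly two quadratic refinements with a $1$-dimensional radical-free... rather, one verifies directly that on a hyperbolic plane the four refinements split as three with $a = 0$ and one with $a = 1$, matching $2^{0}(2+1) = 3$ and $2^{0}(2-1) = 1$.

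For part (2), the transitivity statement follows once we know $\Sp(V)$ acts transitively on refinements of each fixed Arf invariant. Since the refinements form a torsor under $V$ (Proposition, part 2) and $a(q+v) = a(q) + q(v)$ (part 3 of this lemma, which I would prove first — it is purely formal from the definitions and the torsor structure), the two level sets of $a$ are exactly the orbits of the subgroups of translations landing in $\{q = 0\}$ and... more precisely: given two refinements $q, q'$ with $a(q) = a(q')$, Witt's theorem for the symplectic form together with the classification of quadratic forms over $\bbF_2$ (a non-degenerate quadratic form in $2g$ variables over $\bbF_2$ is determined up to isometry by its Arf invariant) produces an element of $\Sp(V)$ carrying one to the other. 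The cardinality count is then a counting argument: the total number of refinements is $|V| = 2^{2g}$ (torsor), and one computes $|\{v \in V : q(v) = 0\}|$ for a refinement with $a(q) = 0$ to be $2^{g-1}(2^g+1)$ and for $a(q) = 1$ to be $2^{g-1}(2^g - 1)$ — this is the standard count of zeros of a nondegenerate quadratic form over $\bbF_2$, provable by induction on $g$ using an orthogonal decomposition into hyperbolic planes plus (in the defect-one case) an anisotropic plane. Combined with part (3), which says $v \mapsto a(q+v) - a(q) = q(v)$ realizes the partition of $V$ into $q$-zeros and $q$-ones as the partition of refinements by Arf invariant, this pins down both cardinalities.

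Part (3) I would actually dispatch first, as it is the engine for part (2): from $(q+v)(w) = q(w) + \langle v, w\rangle$, expand $a(q+v) = \sum_i (q+v)(e_i)(q+v)(\epsilon_i)$ using a decomposition adapted to computing $q(v)$ — e.g.\ choose the symplectic basis so that $v$ is a basis vector or a sum of two, then the cross-terms $q(e_i)\langle v, \epsilon_i\rangle + \langle v, e_i\rangle q(\epsilon_i) + \langle v, e_i\rangle\langle v, \epsilon_i\rangle$ sum to $q(v)$ by the defining relation applied to $v = \sum \langle v,\epsilon_i\rangle e_i + \sum \langle v, e_i\rangle \epsilon_i$. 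The main obstacle is the well-definedness in part (1): the basis-change check is routine $\bbF_2$-bookkeeping, but independence of the Lagrangian decomposition genuinely requires either the reduction-to-$g=1$ argument or invoking the fact that any two Lagrangian decompositions are conjugate under $\Sp(V)$ together with $\Sp(V)$-invariance of the construction — and the latter is a bit circular unless phrased carefully, so I would favor the hands-on additivity-plus-rank-2 approach. Everything else is standard symplectic and quadratic-form combinatorics over $\bbF_2$, for which the reference \cite{Gro04} cited above suffices.
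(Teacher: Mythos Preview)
The paper does not actually prove this lemma: the opening sentence of \S\ref{section_background} says that for proofs of everything in \S\S\ref{sec_quadratic_forms}--\ref{sec_theta_characteristics} the reader is referred to \cite{Gro04}, and Lemma~\ref{lem_arf_invariant} is stated without argument. So your sketch already supplies far more than the paper does, and your treatments of parts (2) and (3) are correct and standard.

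The one place where your outline is under-specified is the well-definedness argument in part (1). Your ``reduce to $g=1$ via orthogonal direct sums'' does not quite close the loop: two different symplectic bases of $V$ induce two \emph{different} orthogonal splittings $V=H_1\perp\cdots\perp H_g$ into hyperbolic planes, and you still have to compare $\sum_i a(q|_{H_i})$ across those splittings, which is the original problem again. The clean fix, which also sidesteps the near-circularity you correctly flag in the $\Sp(V)$-transitivity route, is to tie $a(q)$ to an intrinsic quantity: writing $v=\sum_i(a_ie_i+b_i\epsilon_i)$ in any symplectic basis one has $q(v)=\sum_i\bigl(a_iq(e_i)+b_iq(\epsilon_i)+a_ib_i\bigr)$, whence
\[
\sum_{v\in V}(-1)^{q(v)}=\prod_{i=1}^g\Bigl(\sum_{a,b\in\bbF_2}(-1)^{a\,q(e_i)+b\,q(\epsilon_i)+ab}\Bigr)=(-1)^{\sum_i q(e_i)q(\epsilon_i)}\,2^g,
\]
and the left side is manifestly independent of the basis. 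Incidentally, your computation for part (3) needs no special choice of basis: with $\alpha_i=\langle v,\epsilon_i\rangle$, $\beta_i=\langle v,e_i\rangle$, expanding $\sum_i(q(e_i)+\beta_i)(q(\epsilon_i)+\alpha_i)$ gives $a(q)+\sum_i(\alpha_iq(e_i)+\beta_iq(\epsilon_i)+\alpha_i\beta_i)=a(q)+q(v)$ directly.
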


\subsection{Theta characteristics}\label{sec_theta_characteristics}

Let $k$ be a field of characteristic 0, and let $C$ be a smooth, projective, geometrically irreducible curve over $k$, of genus $g \geq 2$. We write $K_C$ for the canonical bundle of $C$, and $J = \Pic^0(C)$ for the Jacobian of $C$. We write $V = J[2]$, a finite $k$-group. We view $V$ as an $\bbF_2$-vector space of dimension $2g$ with continuous $\Gamma_k$-action. The Weil pairing defines a non-degenerate, strictly alternating bilinear form $\langle \cdot, \cdot \rangle : V \times V \to \bbF_2$ which is $\Gamma_k$-invariant.
\begin{definition} \begin{enumerate} \item A theta characteristic is a line bundle $\cL$ on $C$ such that $\cL^{\otimes 2} \cong K_C$.
\item Let $\cL$ be a theta characteristic. We say that $\cL$ is odd \(resp. even\) if $h^0(\cL)$ is odd \(resp. even\).
\end{enumerate}
\end{definition}
Here and below we write $h^0(\cL) = \dim_k H^0(C, \cL)$ for any line bundle $\cL$ on the curve $C$.
\begin{lemma} \begin{enumerate} \item As a principal homogeneous space for $V$, the $k$-variety of isomorphism classes of theta characteristics is canonically identified with the $k$-scheme of quadratic refinements of the Weil pairing: if $\cL$ is a theta characteristic, we associate to it the quadratic refinement $q : V \to \bbF_2$ defined by the formula $q(v) = h^0(\cL \otimes_{\cO_C} v) + h^0(v) \text{ mod } 2$.
\item With notation as above, the Arf invariant of $q$ is $a(q) = h^0(\cL) \text{ mod } 2$.
\end{enumerate}
\end{lemma}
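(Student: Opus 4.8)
\emph{Sketch of proof.} The statement is classical --- essentially Riemann's, and in this form Mumford's --- and, as noted at the start of \S\ref{section_background}, a complete proof appears in \cite{Gro04}; the plan is as follows. Write $q_\cL(v) = h^0(\cL \otimes_{\cO_C} v) + h^0(\cL) \bmod 2$ for the function attached to a theta characteristic $\cL$. First one reduces to the case $k = k^s$: for a line bundle $M$ on $C$ the integer $h^0(M)$ is invariant under extension of scalars and satisfies $h^0({}^{\gamma} M) = h^0(M)$ for $\gamma \in \Gamma_k$, and the Weil pairing on $V$ is $\Gamma_k$-invariant, so $q_\cL$ is $\Gamma_k$-equivariant; by the equivalence between finite commutative $k$-groups and finite $\Gamma_k$-modules (and the induced equivalence for torsors under such groups), both assertions over $k$ follow from their analogues over $k^s$. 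One may thus assume $k = k^s$, so that $(V, \langle \cdot, \cdot \rangle)$ is a non-degenerate symplectic $\bbF_2$-space of dimension $2g$ and the two $k$-schemes in part (1) are finite sets, each a torsor under $V$.

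All of part (1) follows formally from the congruence
\[ h^0(\cL \otimes v \otimes w) \equiv h^0(\cL \otimes v) + h^0(\cL \otimes w) + h^0(\cL) + \langle v, w \rangle \pmod{2}, \]
valid for all $v, w \in V$. Indeed, this says exactly that $\langle v, w \rangle = q_\cL(v + w) + q_\cL(v) + q_\cL(w)$, i.e.\ that $q_\cL$ is a quadratic refinement of the Weil pairing; and, taking $w$ equal to a fixed $u \in V$, it gives $q_{\cL \otimes u}(v) = q_\cL(v) + \langle u, v \rangle$ for all $v$, i.e.\ that $\cL \mapsto q_\cL$ is compatible with the two $V$-torsor structures ($u$ acting on a quadratic refinement $q$ to give $w \mapsto q(w) + \langle u, w \rangle$, as recalled above), and hence is a bijection. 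To prove the congruence, pass to the symmetric theta divisor $\Theta_\cL \subset J$ determined by $\cL$ regarded as an element of $\Pic^{g-1}(C)$ --- a translate of the Riemann theta divisor, symmetric because $K_C - \cL \cong \cL$. By the Riemann singularity theorem, $\operatorname{mult}_\xi(\Theta_\cL) = h^0(\cL \otimes \xi)$ for $\xi \in J$, so $q_\cL(v) \equiv \operatorname{mult}_v(\Theta_\cL) + \operatorname{mult}_0(\Theta_\cL) \bmod 2$ for $v \in J[2]$, and the congruence becomes the assertion that the sign function $e_* \colon J[2] \to \{ \pm 1 \}$, $e_*(v) = (-1)^{\operatorname{mult}_v(\Theta_\cL) - \operatorname{mult}_0(\Theta_\cL)}$, attached to the normalised symmetric structure on $\cO_J(\Theta_\cL)$, is a quadratic form whose associated bilinear form is the Weil pairing on $J[2]$. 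This last point --- Mumford's theorem on the sign function of a symmetric line bundle defining a principal polarisation, which over $\bbC$ is the transformation law for Riemann's theta function under translation by half-periods --- is the crux of the matter; everything else is formal.

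For part (2), Lemma \ref{lem_arf_invariant}(3) together with the compatibility of torsor structures just established shows that passing from $\cL$ to $\cL \otimes v$ alters $a(q_\cL)$ by $q_\cL(v) = h^0(\cL \otimes v) + h^0(\cL) \bmod 2$ --- exactly the change in $h^0(\cL) \bmod 2$. Hence $a(q_\cL) - h^0(\cL) \bmod 2$ is independent of the theta characteristic $\cL$, and it remains only to evaluate this constant once. One may do this by a direct computation over $\bbC$, where theta characteristics correspond to half-integer characteristics, the parity of $h^0$ of a theta characteristic is the classical parity of the characteristic, and the Arf invariant of the associated quadratic form equals that same parity by a short computation in $\bbF_2$-linear algebra. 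Alternatively, one combines the bijectivity of $\cL \mapsto q_\cL$ with the cardinalities in Lemma \ref{lem_arf_invariant}(2) and the classical count of $2^{g-1}(2^g + 1)$ even theta characteristics; since $2^{g-1}(2^g + 1) \neq 2^{g-1}(2^g - 1)$, the constant is forced to be $0$. All of these ingredients are classical and are carried out in detail in \cite{Gro04}.
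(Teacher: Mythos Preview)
The paper does not actually prove this lemma: as announced at the start of \S\ref{section_background}, the results of \S\S\ref{sec_quadratic_forms}--\ref{sec_theta_characteristics} are stated as background with all proofs deferred to \cite{Gro04}. Your sketch is a correct outline of precisely that classical Riemann--Mumford argument, so there is no independent proof in the paper to compare against; you have simply supplied what the paper omits by citation.

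One small remark worth recording: you have silently replaced the paper's $h^0(v)$ by $h^0(\cL)$ in the defining formula for $q$. Your version is the standard one (and is what appears in \cite{Gro04}); with the formula as literally printed one would have $q(0) = h^0(\cL) + h^0(\cO_C) = h^0(\cL) + 1 \bmod 2$, which is not identically zero, so $q$ would fail to be a quadratic refinement unless $\cL$ is odd. This is evidently a typographical slip in the statement rather than an issue with your argument.
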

Henceforth we identify the set of theta characteristics of the curve $C$ with the set of quadratic refinements $\kappa : V \to \bbF_2$.

\subsection{Heisenberg groups and descent}\label{sec_heisenberg_and_descent}

We continue with the notation of \S \ref{sec_theta_characteristics}. Let $J^{g-1}$ denote the $J$-torsor of degree $g-1$ line bundles on $C$; it contains the theta divisor $W_{g-1}$. Given a theta characteristic $\kappa$ defined over $k$, we have the translation map $t_\kappa : J \to J^{g-1}$, $\cL \mapsto \cL \otimes \kappa$, and we define $\Theta_{\kappa} = t_\kappa^\ast W_{g-1}$. It is a symmetric divisor, and all symmetric theta divisors arise in this fashion. (This is classical; see \cite[Ch. 11]{Bir04}.) Similarly, if $A \in J(k)$ then there is a translation map $t_A : J \to J$, $\cL \mapsto \cL \otimes A$.

The isomorphism class of the line bundle $\cL_\kappa = \cO_J(2\Theta_\kappa)$ is independent of the choice of $\kappa$, but there is no canonical choice of isomorphism as $\kappa$ varies. In particular, even if $\kappa$ is defined only over $k^s$, the field of definition of this bundle is equal to $k$. We choose a bundle $\cL$ in this isomorphism class defined over $k$. We introduce the Heisenberg group $\widetilde{H}_\cL$ of pairs $(\omega, \varphi)$, where $\omega \in J[2]$ and $\varphi : \cL \to t_\omega^\ast \cL$ is an isomorphism. It is an extension
\[\xymatrix@1{ 0 \ar[r] & \bbG_m \ar[r]& \widetilde{H}_\cL \ar[r]& J[2]\ar[r]& 0.} \]
\begin{lemma}\label{lem_weil_pairing_defined_by_commutator} \begin{enumerate} \item Let $\omega,$ $\eta \in J[2]$, and let $\widetilde{\omega}$, $\widetilde{\eta}$ denote lifts of these elements to $\widetilde{H}_\cL$. Then $ \widetilde{\omega} \widetilde{\eta} \widetilde{\omega}^{-1} \widetilde{\eta}^{-1} = (-1)^{\langle \omega, \eta \rangle}$.
\item Let $\Aut(\widetilde{H}_\cL; J[2])$ denote the group of automorphisms of $\widetilde{H}_\cL$ fixing $\bbG_m$ pointwise and acting as the identity on $J[2]$. Then the map
\[ \eta \mapsto ((\omega, \varphi) \mapsto (\omega, (-1)^{\langle \eta, \omega \rangle} \varphi)) \]
defines an isomorphism $J[2] \cong \Aut(\widetilde{H}_\cL; J[2])$.
\end{enumerate}
\end{lemma}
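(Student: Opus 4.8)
Part (1) is a standard feature of Mumford's theory of theta groups, and the plan is to quote it in the appropriate form rather than recompute from scratch. I would first note that since $\bbG_m$ is central in $\widetilde H_\cL$, the element $\widetilde\omega\widetilde\eta\widetilde\omega^{-1}\widetilde\eta^{-1}$ is independent of the choice of lifts $\widetilde\omega,\widetilde\eta$ and lies in $\bbG_m$, so it defines a pairing $e_\cL\colon J[2]\times J[2]\to\bbG_m$; the usual commutator identities in a central extension of an abelian group show $e_\cL$ is bilinear and alternating, and since $J[2]$ is killed by $2$, bilinearity forces $e_\cL$ to take values in $\{\pm1\}$. The real content is identifying $e_\cL$ with the Weil pairing. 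For this one uses that $\cL=\cO_J(2\Theta_\kappa)$ has associated polarization $\phi_\cL=2\phi_{\Theta_\kappa}$, where $\phi_{\Theta_\kappa}\colon J\xrightarrow{\sim}J^\vee$ is the canonical principal polarization, so $K(\cL)=\ker\phi_\cL=J[2]$; Mumford's identification of the commutator pairing of a theta group then matches $e_\cL$ with the Weil $e_2$-pairing transported to $J[2]$ along $\phi_{\Theta_\kappa}$, which by definition of $\langle\cdot,\cdot\rangle$ is $(\omega,\eta)\mapsto(-1)^{\langle\omega,\eta\rangle}$. (Alternatively one can be fully explicit: with $\widetilde\omega=(\omega,\varphi)$, $\widetilde\eta=(\eta,\psi)$ one computes $\widetilde\omega\widetilde\eta=(\omega+\eta,t_\eta^\ast\varphi\circ\psi)$ and $\widetilde\eta\widetilde\omega=(\omega+\eta,t_\omega^\ast\psi\circ\varphi)$ and evaluates the resulting scalar using the theta-function description of $\cL$; I would, however, prefer to cite the classical statement, see e.g.\ \cite{Bir04}.)

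For part (2), I would first check that the displayed formula really defines a homomorphism $\eta\mapsto\alpha_\eta$ into $\Aut(\widetilde H_\cL;J[2])$ and then prove it is bijective by hand. Write the group law of $\widetilde H_\cL$ as $(\omega,\varphi)\cdot(\omega',\varphi')=(\omega+\omega',\,t_{\omega'}^\ast\varphi\circ\varphi')$; the one elementary observation used throughout is that rescaling $\varphi\mapsto a\varphi$, $\varphi'\mapsto b\varphi'$ rescales this product by $ab$. Given this, $\alpha_\eta(\omega,\varphi)=(\omega,(-1)^{\langle\eta,\omega\rangle}\varphi)$ is a group automorphism precisely because $(-1)^{\langle\eta,\omega+\omega'\rangle}=(-1)^{\langle\eta,\omega\rangle}(-1)^{\langle\eta,\omega'\rangle}$; it visibly fixes $\bbG_m$ pointwise and induces the identity on $J[2]$, so $\alpha_\eta\in\Aut(\widetilde H_\cL;J[2])$. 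Bilinearity of $\langle\cdot,\cdot\rangle$ in the first slot makes $\eta\mapsto\alpha_\eta$ a homomorphism, and non-degeneracy of the Weil pairing makes it injective: if $(-1)^{\langle\eta,\omega\rangle}=1$ for all $\omega\in J[2]$ then $\eta=0$.

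For surjectivity I would argue as follows. Let $\beta\in\Aut(\widetilde H_\cL;J[2])$. Since $\beta$ induces the identity on $J[2]=\widetilde H_\cL/\bbG_m$, for each $(\omega,\varphi)$ we may write $\beta(\omega,\varphi)=(\omega,c\cdot\varphi)$ with $c\in\bbG_m$, as any two isomorphisms $\cL\to t_\omega^\ast\cL$ differ by a scalar ($J$ being proper and geometrically connected). Because $\beta$ fixes $\bbG_m$ pointwise and $(0,\text{mult by }\lambda)\cdot(\omega,\varphi)=(\omega,\lambda\varphi)$, the scalar $c$ is unchanged under $\varphi\mapsto\lambda\varphi$; hence $c=c(\omega)$ depends only on $\omega$, with $c(0)=1$. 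Applying $\beta$ to a product and using the rescaling observation gives $c(\omega+\omega')=c(\omega)c(\omega')$, so $c\colon J[2]\to\bbG_m$ is a homomorphism, and since $J[2]$ is $2$-torsion it lands in $\{\pm1\}$, i.e.\ $c$ is an $\bbF_2$-linear functional on $J[2]$. By non-degeneracy of the Weil pairing there is a unique $\eta\in J[2]$ with $c(\omega)=(-1)^{\langle\eta,\omega\rangle}$ for all $\omega$, whence $\beta=\alpha_\eta$. Since every step is functorial in $k$-algebras and the Weil pairing is $\Gamma_k$-invariant, the resulting bijection is an isomorphism of $k$-group schemes.

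The only step with genuine content is the identification of the commutator pairing with the Weil pairing in part (1) — in particular pinning down the normalisation so that the answer is exactly $(-1)^{\langle\omega,\eta\rangle}$ rather than some twist — and that is where I would be most careful (and where I would lean on the classical literature). Everything in part (2) is formal manipulation of the Heisenberg group law together with non-degeneracy of $\langle\cdot,\cdot\rangle$.
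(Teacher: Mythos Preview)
Your proposal is correct. The paper's own proof is extremely brief: for part~(1) it simply declares that the commutator formula ``can be taken as the definition of the Weil pairing,'' and for part~(2) it cites \cite[Lemma~6.6.6]{Bir04} without further comment. Your treatment of part~(1) is in the same spirit (reduce to the classical identification of the theta-group commutator with $e_\cL$, then use $\phi_\cL=2\phi_{\Theta_\kappa}$), just with the reduction spelled out. For part~(2) you give a self-contained elementary argument---showing directly that any $\beta\in\Aut(\widetilde H_\cL;J[2])$ is multiplication by a character $c\colon J[2]\to\{\pm1\}$ and then invoking nondegeneracy of the Weil pairing---whereas the paper outsources this to Birkenhake--Lange. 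Your direct argument is entirely standard and has the virtue of making clear exactly where nondegeneracy enters; the paper's citation is shorter but less transparent. There is no substantive divergence in strategy.
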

\begin{proof}
The first part can be taken as the definition of the Weil pairing. The second part follows from \cite[Lemma 6.6.6]{Bir04}.
\end{proof}
If $\kappa$ is a theta characteristic defined over $k$, then we can define a character $\chi_\kappa : \widetilde{H}_\cL \to \bbG_m$ by the formula $\chi_\kappa(\widetilde{\omega}) = \widetilde{\omega}^2(-1)^{q_\kappa(\omega)}$. (This makes sense since the square of any element of $\widetilde{H}_\cL$ lies in $\bbG_m$.) We then have an exact sequence
\begin{equation}\label{eqn_extension_from_theta_characteristic}
\xymatrix@1{ 1 \ar[r] & \{ \pm 1 \} \ar[r] & \ker \chi_\kappa \ar[r] & J[2] \ar[r] & 1. }
\end{equation}
This construction will play an important role later on; compare the required data at the beginning of \S \ref{sec_group_with_involution} below.

Associated to $J$ is the Kummer exact sequence:
\[ \xymatrix@1{ 0 \ar[r] & J[2] \ar[r] & J \ar[r] & J \ar[r] & 0,} \]
and the associated short exact sequence in Galois cohomology:
\[ \xymatrix@1{ 0 \ar[r] & J(k)/2J(k) \ar[r]^-{\delta} & H^1(k, J[2]) \ar[r] & H^1(k, J)[2] \ar[r] & 0.}\]
The map $\delta$ can be written down explicitly as follows: given $A \in J(k)$, choose $B \in J(k^s)$ such that $[2](B) = A$. Then the cohomology class $\delta(A)$ is represented by the cocycle $\sigma \mapsto {}^\sigma B - B$.

We now give another interpretation of this homomorphism in terms of the group $\widetilde{H}_\cL$. The field of definition of the line bundle $t_B^\ast \cL$ is equal to $k$; we let $\cL_B$ denote a choice of descent to $k$, unique up to $k$-isomorphism. This allows us to define the Heisenberg group $\widetilde{H}_{\cL_B}$ of pairs $(\omega, \varphi)$, where $\omega \in J[2]$ and $\varphi$ is an isomorphism $\cL_B \to t_\omega^\ast \cL_B$. We also fix a choice of isomorphism $f : \cL_B \to t_B^\ast \cL$ over $k^s$.

The choice of $f$ defines an isomorphism $F : (\widetilde{H}_{\cL})_{k^s} \cong (\widetilde{H}_{\cL_B})_{k^s}$, given by the formula
\begin{equation}\label{eqn_isomorphism_of_heisenberg_groups} F : (\omega, \varphi) \mapsto (\omega, t_\omega^\ast f^{-1} \circ t_B^\ast \varphi \circ f).
\end{equation}
We define a cocycle valued in $\Aut(\widetilde{H}_\cL; J[2])$ by the formula $\sigma \mapsto F^{-1} {}^\sigma F$.
\begin{lemma}\label{lem_descent_via_theta_groups} This cocycle is equal to the cocycle $\sigma \mapsto {}^\sigma B - B$ under the identification of Lemma \ref{lem_weil_pairing_defined_by_commutator}.
\end{lemma}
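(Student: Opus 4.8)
The plan is to compute the cocycle $\sigma \mapsto F^{-1}\,{}^\sigma F$ explicitly and match it, via the isomorphism $J[2] \cong \Aut(\widetilde H_\cL; J[2])$ of Lemma \ref{lem_weil_pairing_defined_by_commutator}(2), with the element $({}^\sigma B - B) \in J[2]$. First I would unwind the definitions: the automorphism $F^{-1}\,{}^\sigma F$ of $(\widetilde H_\cL)_{k^s}$ fixes $\bbG_m$ and acts trivially on $J[2]$ (since both $F$ and ${}^\sigma F$ do), so it really does lie in $\Aut(\widetilde H_\cL; J[2])$, and by the lemma it equals the automorphism $(\omega,\varphi) \mapsto (\omega, (-1)^{\langle \eta_\sigma, \omega\rangle}\varphi)$ for a unique $\eta_\sigma \in J[2]$. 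The goal is to show $\eta_\sigma = {}^\sigma B - B$. Note ${}^\sigma B - B \in J[2]$ because $[2]({}^\sigma B - B) = {}^\sigma A - A = 0$, so the comparison makes sense.

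The key computation is the following. Applying formula (\ref{eqn_isomorphism_of_heisenberg_groups}) for $F$ and its Galois conjugate ${}^\sigma F$ (which is given by the same formula with $f$ replaced by ${}^\sigma f$, an isomorphism $\cL_B \to t_{{}^\sigma B}^\ast \cL$ — here one uses that $\cL$ and $\cL_B$ are defined over $k$), one gets that $F^{-1}\,{}^\sigma F$ sends $(\omega,\varphi)$ to $(\omega, g_\sigma(\omega)\cdot \varphi)$ where $g_\sigma(\omega) \in \bbG_m$ is built out of $f$ and ${}^\sigma f$. Concretely, ${}^\sigma f \circ f^{-1}$ is an automorphism of $t_B^\ast \cL$ twisted by translation, i.e. it corresponds to an isomorphism $t_B^\ast \cL \to t_{{}^\sigma B}^\ast \cL$; composing appropriately and using that $t_{{}^\sigma B - B}^\ast$ differs from the identity, the scalar $g_\sigma(\omega)$ works out to $(-1)^{\langle {}^\sigma B - B,\, \omega\rangle}$. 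The clean way to see the sign is to invoke the theta-group-theoretic description of the Weil pairing: the ambiguity in choosing the descent isomorphism $f$ (equivalently, comparing $f$ with ${}^\sigma f$) is measured precisely by the element ${}^\sigma B - B \in J[2]$ acting through the canonical action of $J[2]$ on the line bundle $\cL$ via the theta group, and the induced automorphism of $\widetilde H_\cL$ is exactly the one attached to ${}^\sigma B - B$ under Lemma \ref{lem_weil_pairing_defined_by_commutator}(2). This last point is essentially \cite[Lemma 6.6.6]{Bir04} again, applied to identify the translation action on sections of $\cL = \cO_J(2\Theta_\kappa)$ with the theta-group action.

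The main obstacle I anticipate is the bookkeeping with translation pullbacks: the maps $t_\omega^\ast f^{-1} \circ t_B^\ast \varphi \circ f$ live on various line bundles pulled back under compositions of translations, and one must keep careful track of the canonical identifications $t_\omega^\ast t_B^\ast \cL \cong t_{B}^\ast t_\omega^\ast \cL$ (and their Galois conjugates) and the cocycle identity $t_{\omega+\omega'}^\ast \cong t_\omega^\ast t_{\omega'}^\ast$ to see that the resulting scalar is genuinely $(-1)^{\langle {}^\sigma B - B, \omega\rangle}$ and not off by some other element of $J[2]$ or a nontrivial element of $\bbG_m$. The conceptual shortcut that avoids most of this is: the group $\widetilde H_{\cL_B}$ is canonically a twist of $\widetilde H_\cL$, and the class of this twist in $H^1(k, \Aut(\widetilde H_\cL; J[2])) = H^1(k, J[2])$ is by construction the image of $t_B^\ast \cL$ under descent; but $t_B^\ast \cL = t_B^\ast t_\kappa^\ast \cO_J(2 W_{g-1})$, and chasing through the theorem of the square one finds this descent datum is exactly $\delta(A)$, i.e. the cocycle $\sigma \mapsto {}^\sigma B - B$. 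I would present the argument in this conceptual form, relegating the translation-pullback verification of the sign to a short computation citing Lemma \ref{lem_weil_pairing_defined_by_commutator} and \cite{Bir04}.
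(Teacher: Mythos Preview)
Your proposal is correct and follows essentially the same strategy as the paper: both recognize that $F^{-1}\,{}^\sigma F$ lies in $\Aut(\widetilde H_\cL; J[2])$ and that the relevant element of $J[2]$ arises from comparing $f$ with ${}^\sigma f$. The paper, however, carries this out by a purely explicit calculation rather than by your descent-theoretic ``conceptual shortcut'': it writes down the concrete element $(\eta,\psi) \in \widetilde H_\cL$ with $\eta = {}^\sigma B - B$ and $\psi = t_{-{}^\sigma B}^\ast(f \circ {}^\sigma f^{-1})$, uses Lemma~\ref{lem_weil_pairing_defined_by_commutator}(1) to identify the automorphism $(\omega,\varphi) \mapsto (\omega,(-1)^{\langle\omega,\eta\rangle}\varphi)$ with conjugation by $(\eta,\psi)$, and then simply expands both $F^{-1}\,{}^\sigma F(\omega,\varphi)$ and $(\eta,\psi)(\omega,\varphi)(\eta,\psi)^{-1}$ to see they agree. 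This handles exactly the translation-pullback bookkeeping you flagged as the main obstacle, and does so in a couple of lines without invoking the theorem of the square or any further results from \cite{Bir04}. Your conceptual route would also work, but the paper's direct verification is shorter and entirely self-contained.
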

In particular, this cocycle depends only on $B$, and not on any other choice.
\begin{proof}
The proof is by an explicit calculation, $F^{-1} {}^\sigma F$ being given by
\[ (\omega, \varphi) \mapsto \left( \omega, t^\ast_{\omega - B} f \circ t^\ast_{-B} \bigg[ t_\omega^\ast {}^\sigma f^{-1} \circ t^\ast_{{}^\sigma B} \varphi \circ {}^\sigma f \bigg] \circ t^\ast_{-B} f^{-1} \right) \]
We must show that this expression is equal to $(\omega, (-1)^{\langle \omega, {}^\sigma B - B \rangle} \varphi)$. However, writing $\eta = {}^\sigma B - B$ and $\psi = t^\ast_{-{}^\sigma B}(f \circ {}^\sigma f^{-1})$, we have $(\eta, \psi) \in \widetilde{H}_\cL$ and, by Lemma \ref{lem_weil_pairing_defined_by_commutator},
\[ (\omega, (-1)^{\langle \omega, {}^\sigma B - B \rangle} \varphi) = (\eta, \psi) (\omega, \varphi) (\eta, \psi)^{-1} (\omega, \varphi)^{-1} (\omega, \varphi) = (\eta, \psi) (\omega, \varphi) (\eta, \psi)^{-1} = (\omega,  t^\ast_{\omega + \eta} \psi \circ t_\eta^\ast \varphi \circ t_\eta^\ast \psi^{-1}).\]
Expanding this expression now shows it to be equal to $F^{-1} {}^\sigma F$.
\end{proof}

\subsection{Invariant theory of reductive groups with involution}\label{sec_vinberg_and_richardson}

Let $k$ be a field of characteristic 0, and let $H$ be a split adjoint simple group over $k$ of type $A$, $D$, or $E$.
\begin{proposition}\label{prop_stable_involution} There exists a unique $H(k)$-conjugacy class of involutions $\theta$ of $H$ satisfying the following two conditions:
\begin{enumerate} \item $\tr(d\theta : \frh \to \frh) = - \rank H$.
\item The group $(H^\theta)^\circ$ is split.
\end{enumerate}
\end{proposition}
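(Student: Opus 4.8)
The plan is to reduce the statement to a computation with the root datum of $H$, using the standard bijection between conjugacy classes of (semisimple, in particular order-$2$) automorphisms of a split adjoint group and a suitable combinatorial set attached to the (affine) Dynkin diagram. Since $H$ is adjoint, an involution $\theta$ is inner if and only if it is realized by conjugation by an element $t$ of a fixed maximal torus $T$ with $t^2 = 1$, i.e.\ by $t \in T[2]$; and two such give conjugate involutions precisely when the elements of $T[2]/W$ agree. Outer involutions exist only in the cases with diagram automorphisms ($A_n$, $D_n$, $E_6$), and are classified similarly after twisting by a pinned diagram automorphism. So the first step is: enumerate, in each type $A$, $D$, $E$, the $W$-orbits (resp.\ twisted $W$-orbits) on involutions, and for each one compute the two invariants $\tr(d\theta)$ and whether $(H^\theta)^\circ$ is split.

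The key observation making the trace invariant tractable is that $\frh = \frt \oplus \bigoplus_{\alpha} \frh_\alpha$, and for $\theta = \Ad(t)$ with $t \in T[2]$ we have $d\theta = 1$ on $\frt$ and $d\theta = \alpha(t) = \pm 1$ on each root space $\frh_\alpha$. Hence
\[
\tr(d\theta) = \rank H + \#\{\alpha : \alpha(t) = 1\} - \#\{\alpha : \alpha(t) = -1\} = \rank H + (\dim \frh^\theta - \rank H) - \dim \frh^{\theta = -1}.
\]
So the condition $\tr(d\theta) = -\rank H$ is equivalent to $\dim \frh^{\theta=-1} = \dim\frh^\theta + \rank H$, i.e.\ the $(-1)$-eigenspace is as large as possible given that $\frt \subset \frh^\theta$; equivalently $\dim \frh^\theta = (\dim \frh - \rank H)/2 + \rank H = \tfrac12(\dim\frh + \rank H)$, which is exactly the dimension of a Borel subalgebra. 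This identifies the involutions in the proposition: they are the "split" or "maximally split" involutions, i.e.\ those whose fixed subgroup $G = (H^\theta)^\circ$ has the property that $\dim G$ equals $\dim B$ for a Borel $B$ of $H$, and (condition (2)) $G$ is $k$-split — these are precisely the Cartan involutions of the split real form, transported to a general field, realized by $t \in T[2]$ with the number of roots $\alpha$ having $\alpha(t)=-1$ equal to the number of positive roots. For type $A_n$, $D_n$ (or $E_6$) one must also check whether the extremal involution is inner or outer and match it against the known tables (Cartan's classification / Kac coordinates): the upshot is that in each simply-laced type there is exactly one conjugacy class of involutions attaining this extremal value of the trace with split fixed group, and one verifies the split-ness of $(H^\theta)^\circ$ by exhibiting a split maximal torus inside it (for the inner case, $T$ itself when $\theta$ inverts enough roots but fixes $T$; more care is needed in the outer case, where one uses the fixed torus of the twisted action).

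Concretely I would carry this out as follows. (i) Fix a pinning $(T, B, \{x_\alpha\})$ of $H$. (ii) Show existence: produce an explicit $\theta$ — for inner types, take $t \in T(k)[2]$ with $\alpha_i(t) = -1$ for all simple roots $\alpha_i$ (this uses adjointness so that such $t$ exists and is unique up to the relevant symmetry); then $\alpha(t) = (-1)^{\mathrm{ht}(\alpha)}$, and a count of positive roots by height parity gives $\tr(d\theta) = -\rank H$, while $G = (H^\theta)^\circ$ contains the split torus $T$; for the types where this $\theta$ turns out to be the wrong one (or where one wants the outer involution), replace it by $\theta' = \Ad(t) \circ \tau$ with $\tau$ the pinned diagram automorphism and redo the count. (iii) Show uniqueness: given any $\theta$ with properties (1) and (2), conjugate so that $\theta$ normalizes a pair $(T_0, B_0)$ with $T_0$ a maximally split torus of the (quasi-split, since it contains a split Borel-dimension subgroup...) — more simply, use that $G = (H^\theta)^\circ$ has maximal rank complement data forcing $\theta$ to be, up to conjugacy, of the extremal form above; then the classification of maximal-rank or of "principal" involutions (e.g.\ via Kac coordinates on the affine diagram, where condition (1) pins down the labelling and condition (2) selects between the at most two candidates) shows the $H(k)$-class is unique. (iv) Finally, note that everything is insensitive to the base field $k$ of characteristic $0$ because $H$ is split and the relevant tori, $t \in T[2]$, and the diagram automorphism are all defined over the prime field.

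\medskip

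\textbf{The main obstacle.} The existence half is essentially a bookkeeping exercise with root heights and is routine once the right $t$ (or $t$ together with a diagram automorphism) is written down. The genuine difficulty is \emph{uniqueness}, and within it the interaction of the two conditions: condition (1) alone pins down the dimension of $\frh^\theta$ but not the isomorphism type of the involution (there can be several involutions with fixed subgroups of the same dimension but which are not conjugate, and whose fixed groups are different inner forms), and it is condition (2) — that $(H^\theta)^\circ$ be $k$-split — that cuts the list down to a single class. So the heart of the argument is a careful pass through Cartan's classification of real forms / involutions in types $A_n, D_n, E_6, E_7, E_8$ (equivalently, through the Kac coordinates of order-$2$ automorphisms), verifying in each type that exactly one class satisfies both constraints; the outer cases ($A_n$ with $n\ge 2$, $D_n$, $E_6$) require the extra care of checking that the fixed group of the would-be extremal \emph{inner} involution is not split while that of an appropriate \emph{outer} one is (or vice versa), so that the two conditions are simultaneously satisfiable and by a unique class.
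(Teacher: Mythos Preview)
Your plan takes a genuinely different route from the paper, and it contains a real gap in the uniqueness half, stemming from a misdiagnosis of what condition (2) is for.

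You locate the role of (2) in choosing between competing \emph{geometric} classes (an inner versus an outer candidate in types $A_n$, $D_n$, $E_6$). In fact condition (1) already determines a unique $H(k^s)$-conjugacy class in every irreducible simply-laced type: for $A_n$ ($n\ge 2$), $E_6$, and $D_{2m+1}$ one checks that \emph{no} inner involution has $\tr d\theta=-\rank H$ and exactly one outer class does, while for $E_7$, $E_8$, $D_{2m}$ the reverse holds. Relatedly, your height-parity claim is wrong as stated: for $\theta=\Ad(t)$ with $\alpha_i(t)=-1$ one gets $\tr d\theta=\rank H+2\sum_{\alpha>0}(-1)^{\mathrm{ht}(\alpha)}=\rank H-2\#\{i:m_i\text{ odd}\}$, where the $m_i$ are the exponents; this equals $-\rank H$ exactly when all exponents are odd, i.e.\ when $-1\in W$. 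Try $A_2$. So nothing in your case analysis actually uses (2). The genuine role of (2) is \emph{rational} uniqueness: within the single $H(k^s)$-class fixed by (1) there may be several $H(k)$-classes, indexed by $\ker\bigl(H^1(k,H^\theta)\to H^1(k,H)\bigr)$, and one must show exactly one has $(H^\theta)^\circ$ split over $k$. Your step (iv) only shows that your constructed $\theta$ is defined over $k$; it says nothing about why a second $\theta'$ satisfying (1) and (2) must be $H(k)$-conjugate to it. In the inner case this can be repaired without much pain (both $s,s'$ lie in $k$-points of split maximal tori, hence in a common one, hence are $W$-conjugate, and $N_H(T)(k)\twoheadrightarrow W$ since $H^1(k,T)=0$); but in the outer case $(H^\theta)^\circ$ has strictly smaller rank than $H$ and this argument is unavailable. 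You have not supplied a replacement.

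The paper avoids both the type-by-type enumeration and the cohomological bookkeeping. It quotes \cite[Corollary~2.15]{Tho13}, which already gives a unique $H(k)$-class characterized by condition (1) together with ``$\frh^{d\theta=-1}$ contains a regular nilpotent,'' and notes that this class has split $(H^\theta)^\circ$ by construction. The only new content is the implication (1)${}+{}$(2) $\Rightarrow$ regular nilpotent in $\frh^{d\theta=-1}$, proved by a short $\frs\frl_2$-triple argument: choose a split Cartan $\frt_0\subset\frh^\theta$, enlarge it to a split Cartan $\frt\subset\frh$, produce over $k^s$ a normal $\frs\frl_2$-triple $(E,X,F)$ with $E$ regular nilpotent and $X\in\frt_0\otimes_k k^s$ (via \cite{Tho13}), observe that integrality of the root values $\alpha(X)$ forces $X\in\frt_0$, and then apply \cite[Proposition~7]{Gra11} to find $E'\in\frh^{d\theta=-1}$ completing $X$ to a normal triple over $k$.
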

\begin{proof}
The result \cite[Corollary 2.15]{Tho13} states that there is a unique $H(k)$-orbit of involutions $\theta : H \to H$ such that $\tr d\theta = - \rank H$ and $\frh^{d \theta = -1}$ contains a regular nilpotent element. The discussion there also shows by construction that for each $\theta$ in this class, the group $(H^\theta)^\circ$ is split. We must show that if $\theta : H \to H$ is an involution such that $\tr d \theta = - \rank H$ and $(H^\theta)^\circ$ is split, then $\frh^{d \theta = -1}$ contains a regular nilpotent. Let $\frt_0 \subset \frh^{d \theta = 1}$ be a split Cartan subalgebra, and let $\frt \subset \frh$ be a split Cartan subalgebra containing $\frt_0$.

By \cite[Lemma 2.14]{Tho13} and \cite[Lemma 2.6]{Tho13}, we can find a normal $\frs\frl_2$-triple  $(E, X, F)$ in $\frh \otimes_k k^s$ (i.e.\ a tuple of elements $E, X, F \in \frh \otimes_k k^s$ satisfying the relations
\begin{gather*}
[E, F] = X, \text{ }[X, E] = 2E, \text{ }[X, F] = -2F,\\
\theta(X) = X,\text{ }\theta(E) = -E,\text{ and }\theta(F) = -F)
\end{gather*}
with $E$ regular nilpotent and $X \in \frt_0 \otimes_k k^s$. Since $X$ is part of an $\frs\frl_2$-triple, it follows that $\alpha(X) \in \bbZ$ for every root of $\frt$ in $\frh$, hence $X \in \frt$, hence $X \in \frt_0$. By \cite[Proposition 7]{Gra11}, we can find elements $E' \in \frh^{d \theta = -1}$ and $F' \in \frh^{d \theta = -1} \otimes_k k^s$ such that $(E', X, F')$ is a normal $\frs\frl_2$-triple. In particular, $E'$ is a regular nilpotent. This completes the proof.
\end{proof}
Henceforth we fix a choice of $\theta$ satisfying the conclusion of Proposition \ref{prop_stable_involution} and write $G = (H^\theta)^\circ$. Then $G$ is a split semisimple group. (For a proof that $G$ is semisimple, see \S \ref{sec_proof_of_pro:fund} of the appendix to this paper.) We will study the invariant theory of two different actions of $G$. We first consider $V = \frh^{d \theta = -1}$. Then $V$ is a linear representation of the group $G$.
\begin{theorem}\label{thm_vinberg} \begin{enumerate} \item $V$ satisfies the Chevalley restriction theorem: if $\frt \subset V$ is a Cartan subalgebra, then the map $N_G(\frt) \to W_\frt = N_H(\frt)/Z_H(\frt)$ is surjective, and the inclusion $\frt \subset V$ induces an isomorphism
\[ \frt \!\sslash \!W_\frt \cong V \!\!\sslash\! G. \]
In particular, the quotient $V \!\!\sslash\! G$ is isomorphic to affine space.
\item Suppose that $k = k^s$, and let $x, y \in V$ be regular semisimple elements. Then $x$ is $G(k)$-conjugate to $y$ if and only if $x, y$ have the same image in $V \!\! \sslash \! G$.
\item There exists a discriminant polynomial $\Delta \in k[V]$ such that for all $x \in V$, $x$ is regular semisimple if and only if $\Delta(x) \neq 0$, if and only if the $G$-orbit of $x$ is closed in $V$ and $\Stab_G(x)$ is finite.
\end{enumerate}
\end{theorem}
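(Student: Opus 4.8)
The plan is to view Theorem~\ref{thm_vinberg} as the invariant theory of the $\bbZ/2\bbZ$-graded Lie algebra $\frh = \frg \oplus V$, $V = \frh^{d\theta=-1}$, attached to the stable involution, and to assemble it from the theory of $\theta$-groups (Vinberg's work and Richardson \cite{Ric82}) together with the analysis of the stable case in \cite[\S2]{Tho13}, keeping careful track of rationality over $k$. The structural fact I would establish first, and the one that makes the stable involution special, is that the grading is of \emph{maximal rank}: a maximal subspace $\frt \subseteq V$ consisting of commuting semisimple elements (a Cartan subspace) is in fact a Cartan subalgebra of $\frh$, on which $\theta$ then necessarily acts as $-1$; conversely any Cartan subalgebra of $\frh$ contained in $V$ is such a $\frt$. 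This follows from the normal $\frs\frl_2$-triple $(E,X,F)$ with $E \in V$ regular nilpotent produced in the proof of Proposition~\ref{prop_stable_involution}: Jacobson--Morozov gives $\dim \frz_{\frh}(E) = \rank H$, and the deformation argument for graded Lie algebras (\cite[\S2]{Tho13}) upgrades this to $\dim \frt = \rank H$. That all Cartan subspaces of $V$ are $G(k^s)$-conjugate is Vinberg's theorem (or \cite{Ric82}).

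For part~(1): since $d\theta$ is $-1$ on $\frt$, the involution $\theta$ acts trivially on $W_\frt = N_H(\frt)/Z_H(\frt) = W(\frh,\frt)$ and by inversion on $T = Z_H(\frt)$, and one checks that each simple reflection $s_\alpha$ is represented by $n_\alpha = \exp(e_\alpha)\exp(-f_\alpha)\exp(e_\alpha)$ for an $\frs\frl_2$-triple $(e_\alpha,h_\alpha,f_\alpha)$ attached to $\alpha$ and normalized so that $\theta(e_\alpha) = -f_\alpha$; such an $n_\alpha$ is $\theta$-fixed and lies in $G$, whence $N_G(\frt) \twoheadrightarrow W_\frt$ (the details are in \cite[\S2]{Tho13}). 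Restriction of functions then gives a map $k[V]^G \to k[\frt]^{W_\frt}$, and the assertion that $\frt \hookrightarrow V$ induces an isomorphism $\frt\!\sslash\! W_\frt \xrightarrow{\,\sim\,} V\!\!\sslash\! G$ is the Chevalley restriction theorem for this $\theta$-group (\cite{Ric82}, \cite[\S2]{Tho13}); it suffices to verify this after base change to $k^s$, both sides being of finite type over $k$. Finally $k[\frt]^{W_\frt}$ is a polynomial ring by Chevalley's theorem on invariants of the reflection group $W_\frt = W(\frh,\frt)$, so $V\!\!\sslash\! G \cong \bbA^{\rank H}$.

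For part~(2), with $k = k^s$: if $x,y \in V$ are regular semisimple then $\frz_{\frh}(x)$ and $\frz_{\frh}(y)$ are $\theta$-stable Cartan subalgebras of $\frh$ contained in $V$, so after acting by $G(k)$ we may assume $x,y \in \frt$; if they have the same image in $\frt\!\sslash\! W_\frt$ then $y = wx$ for some $w \in W_\frt$, which is realized by an element of $N_G(\frt)(k)$ by part~(1). For part~(3), take $\Delta \in k[V]^G = k[\frt]^{W_\frt}$ corresponding to the discriminant $\prod_{\alpha>0}\alpha^2$. Since $x$ and its semisimple part $x_s \in V$ have the same image in $V\!\!\sslash\! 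G$, we have $\Delta(x) = \Delta(x_s)$, which is nonzero iff $x_s$ is regular in $\frh$, iff $\frz_{\frh}(x_s)$ is a Cartan subalgebra, iff $x$ is regular semisimple. Finally, $G\cdot x$ is closed iff $x$ is semisimple (the Kempf--Hesselink instability criterion in the graded setting, \cite{Ric82}); and for semisimple $x$ the stabilizer has $\Lie\Stab_G(x) = \frz_{\frh}(x)\cap\frg$, which vanishes iff $\frz_{\frh}(x)\subseteq V$, and then (as $-\mathrm{id}$ is a Lie algebra automorphism only of an abelian algebra) $\frz_{\frh}(x)$ is abelian, hence a Cartan subalgebra, hence $x$ is regular. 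Thus $G\cdot x$ is closed with finite stabilizer iff $x$ is regular semisimple iff $\Delta(x)\neq 0$.

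The step I expect to be the main obstacle is the maximal-rank property together with the identity $W_\frt = W(\frh,\frt)$ realized inside $G$ --- that the little Weyl group of the stable grading is the \emph{full} Weyl group --- and the rationality bookkeeping it entails (splitness of $\frt$, descent of the quotient isomorphism and of the conjugating elements from $k^s$ to $k$). Once these are in hand, the remaining assertions follow formally from the classical invariant theory of $W_\frt$ acting on $\frt$ and from general facts about closed orbits for actions arising from involutions.
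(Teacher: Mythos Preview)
Your proposal is correct and follows exactly the approach the paper intends: the paper's own proof is simply a citation to Vinberg's theory of $\theta$-groups as summarized in \cite{Pan05} and \cite[\S2]{Tho13}, and you have faithfully unpacked the content of those references for the stable involution. The key structural fact you isolate---that a Cartan subspace of $V$ is a full Cartan subalgebra of $\frh$, so that the little Weyl group coincides with the full Weyl group $W(\frh,\frt)$ and is realized inside $G$---is precisely what distinguishes the stable grading and is established in \cite[\S2]{Tho13}.
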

\begin{proof}
This follows from results of Vinberg, which are summarized in \cite{Pan05} or (in our case of interest) \cite[\S 2]{Tho13}.
\end{proof}

We now consider the variety $Y \subset H$, locally closed image of the morphism $H \to H, h \mapsto h^{-1} \theta(h)$. It is a connected component of the subvariety $\{ h \in H \mid \theta(h) = h^{-1} \}$, and is in particular closed in $H$. Note that $Y$ has a marked point (namely the identity element of $H$), and the tangent space to $Y$ at this marked point is canonically isomorphic, as $G$-representation, to the representation $V$ defined above.
\begin{theorem}\label{thm_richardson}\begin{enumerate} \item $Y$ satisfies the Chevalley restriction theorem: if $T \subset Y$ is a maximal torus, then $N_G(T) \to W_T = N_H(T)/Z_H(T)$ is surjective, and the inclusion $T \subset Y$ induces an isomorphism
\[ T \! \sslash \! W_T \cong Y \!\! \sslash \! G. \]
\item Suppose that $k = k^s$, and let $x, y \in Y$ be regular semisimple elements. Then $x$ is $G(k)$-conjugate to $y$ if and only if $x, y$ have the same image in $Y \!\! \sslash  \!G$.
\item There exists a discriminant polynomial $\Delta \in k[Y]$ such that for all $x \in Y$, $x$ is regular semisimple if and only if $\Delta(x) \neq 0$, if and only if the $G$-orbit of $x$ is closed in $Y$ and $\Stab_G(x)$ is finite.
\end{enumerate}
\end{theorem}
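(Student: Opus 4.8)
The plan is to deduce Theorem \ref{thm_richardson} from the analogous statement for the symmetric variety attached to the \emph{simply connected} cover, where the relevant invariant theory is due to Richardson \cite{Ric82}, together with the Chevalley restriction theorem for $V$ already recorded in Theorem \ref{thm_vinberg}. Let $H^\sim \to H$ be the simply connected cover, let $\theta$ lift to an involution of $H^\sim$ (automatic in characteristic $0$), let $G^\sim = (H^{\sim,\theta})^\circ$, and let $Y^\sim \subset H^\sim$ be the corresponding connected component of $\{h : \theta(h) = h^{-1}\}$, the image of $h \mapsto h^{-1}\theta(h)$. Richardson's work shows that $Y^\sim$ is a spherical $G^\sim$-variety, that a maximal torus $T^\sim \subset Y^\sim$ meets every closed orbit, that $N_{G^\sim}(T^\sim) \to W_{T^\sim}$ is surjective, and that $T^\sim \!\sslash\! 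W_{T^\sim} \xrightarrow{\sim} Y^\sim \!\sslash\! G^\sim$; moreover over a separably closed field regular semisimple elements with the same image in the quotient are $G^\sim(k)$-conjugate, and the discriminant locus is exactly the non-regular-semisimple locus. The isogeny $H^\sim \to H$ restricts to a finite surjective map $Y^\sim \to Y$ which is $G^\sim$-equivariant (with $G^\sim \to G$ an isogeny), and I would transport each of the three assertions across this map.

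Concretely, for part (1): choose a maximal torus $T \subset Y$, lift it to $T^\sim \subset Y^\sim$, and use that the isogeny identifies the character lattices rationally so that $T \!\sslash\! W_T$ and $T^\sim \!\sslash\! W_{T^\sim}$ have the same normalization; combined with Richardson's isomorphism for $H^\sim$ and the fact that $Y$ and $Y^\sim$ have the same function field of invariants (the isogeny is a bijection on $k^s$-points up to a central finite group acting trivially on the adjoint quotient), one gets $T \!\sslash\! W_T \cong Y \!\sslash\! G$. For surjectivity of $N_G(T) \to W_T$, I would instead argue directly and intrinsically: passing to the tangent space at the identity $e \in Y$ sends $T$ to a Cartan subalgebra $\frt \subset V = \frh^{d\theta=-1}$ and identifies $W_T$ with $W_\frt$, and then Theorem \ref{thm_vinberg}(1) gives surjectivity of $N_G(\frt) \to W_\frt$, which suffices since $N_G(T)$ surjects onto $N_G(\frt)$ modulo centralizers. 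For part (3): take $\Delta$ on $Y$ to be (a descent of) the pullback of Richardson's discriminant on $Y^\sim$; since the isogeny $Y^\sim \to Y$ is finite and an isomorphism on the open stable loci, $x \in Y$ is regular semisimple iff its preimage is, iff $\Delta \neq 0$, iff the orbit is closed with finite stabilizer — the orbit-closedness and finiteness of stabilizers transfer along the isogeny because $\ker(G^\sim \to G)$ is central and finite.

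For part (2), the separably closed case, I would reduce to $H^\sim$ as follows. Suppose $x, y \in Y(k)$ are regular semisimple with the same image in $Y \!\sslash\! G$. Lift $x, y$ to $\tilde x, \tilde y \in Y^\sim(k)$ (possible as $k = k^s$ and the map is surjective on $k$-points); these are regular semisimple in $Y^\sim$ and, after adjusting $\tilde y$ by an element of the finite central kernel $Z = \ker(Y^\sim \to Y)$ if necessary, have the same image in $Y^\sim \!\sslash\! G^\sim$ — here one must check that the two quotients are compatible, i.e.\ that the fibre of $Y^\sim \!\sslash\! G^\sim \to Y \!\sslash\! G$ over the relevant point is exactly the $Z$-orbit of $\tilde x$, which follows from part (1) applied to both covers and the structure of the torus isogeny. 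Then Richardson gives $g \in G^\sim(k)$ with $g \tilde x g^{-1} = \tilde y$, and the image of $g$ in $G(k)$ conjugates $x$ to $y$. The converse is formal.

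The main obstacle I expect is the careful bookkeeping at the level of centres and component groups: one must verify that $Y$ (defined using the adjoint group $H$) really is the image of the isogeny $Y^\sim \to Y$ and not a larger union of components, that $\theta$ lifts compatibly, that $G^\sim \to G$ is a central isogeny with the expected kernel, and — most delicately — that the natural maps $Y^\sim \!\sslash\! G^\sim \to Y \!\sslash\! G$ and $T^\sim \!\sslash\! W_{T^\sim} \to T \!\sslash\! W_T$ are compatible under Richardson's isomorphism and become an isomorphism (or a controlled finite quotient) in the right way. This is exactly the point flagged in the introduction, where the authors remark that $Y \!\sslash\! G$ is \emph{not} affine space whereas it would be for the simply connected cover; so the argument must genuinely track how the fundamental group of $H$ enters the invariant ring. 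Once this comparison is in place, parts (1)–(3) follow by transport of structure from \cite{Ric82} together with Theorem \ref{thm_vinberg} for the tangent-space statement.
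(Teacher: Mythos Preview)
The paper's proof is a one-line citation: ``See \cite[\S 0]{Ric82}.'' Richardson's results in that paper are stated and proved for an arbitrary connected reductive group equipped with an involution, not only for simply connected groups. So the theorem applies directly to the adjoint group $H$ with the involution $\theta$, and no passage through the simply connected cover is required.

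Your plan is therefore built on a misconception about the scope of \cite{Ric82}, and the detour you propose introduces real difficulties rather than avoiding them. In particular, your argument for part~(1) does not go through as written: the GIT quotients $T \!\sslash\! W_T$ and $T^\sim \!\sslash\! W_{T^\sim}$ are already normal affine varieties, so saying they ``have the same normalization'' is vacuous, and ``same function field of invariants'' does not force the invariant \emph{rings} to agree. Indeed they do not agree --- the paper's introduction explicitly notes that $Y \!\sslash\! G$ for the adjoint group is \emph{not} affine space, whereas for the simply connected cover it is. To rescue your approach you would have to identify precisely the finite group by which $Y^\sim \!\sslash\! G^\sim$ must be quotiented to obtain $Y \!\sslash\! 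G$, check it matches the corresponding quotient on the torus side, and verify compatibility; this is exactly the ``careful bookkeeping'' you flag as the main obstacle, and you have not carried it out. The paper sidesteps all of this by citing Richardson directly.
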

\begin{proof}
See \cite[\S 0]{Ric82}.
\end{proof}

\section{A group with involution}\label{sec_group_with_involution}

Let $k$ be a field of characteristic 0. Suppose that we are given the following data:
\begin{itemize} \item An irreducible simply laced root lattice $(\Lambda, \langle \cdot, \cdot \rangle)$ together with a continuous homomorphism $\Gamma_k \to W(\Lambda) \subset \Aut(\Lambda)$.
\item A central extension $\widetilde{V}$ of $V = \Lambda / 2 \Lambda$:
\[ 0 \to \{ \pm 1 \} \to \widetilde{V} \to V \to 0, \]
together with a homomorphism $\Gamma_k \to \Aut(\widetilde{V})$. We suppose that $\Gamma_k$ leaves invariant the subgroup $\{ \pm 1 \}$, and that the induced homomorphism $\Gamma_k \to \Aut(V)$ agrees with the homomorphism $\Gamma_k \to \Aut(\Lambda) \to \Aut(\Lambda / 2 \Lambda) = \Aut(V)$. We also suppose that for $\widetilde{v} \in \widetilde{V}$, we have the relation $\widetilde{v}^2 = (-1)^{\langle v, v \rangle/2}$.
\end{itemize}
In terms of this data we will define, following Lurie \cite{Lur01}, the following:
\begin{enumerate} \item A simple Lie algebra $\frh$ over $k$ of type equal to the Dynkin type of $\Lambda$.
\item A maximal torus $T$ of $H$, the adjoint group over $k$ with Lie algebra $\frh$, together with an isomorphism $T[2](k^s) \cong V^\vee$ of $\bbZ[\Gamma_k]$-modules.
\item An involution $\theta : H \to H$ leaving $T$ stable, and satisfying $\theta(t) = t^{-1}$ for all $t \in T(k)$.
\end{enumerate}
Suppose given further the data of a finite-dimensional $k$-vector space $W$ and a homomorphism $\rho : \widetilde{V} \to \GL(W_{k^s})$ such that $\rho(-1) = - \mathrm{id}_W$ and for all $\sigma \in \Gamma_k$ and $\widetilde{v} \in \widetilde{V}$, we have $\rho({}^\sigma \widetilde{v}) = {}^\sigma \rho(\widetilde{v})$. Then we will further define:
\begin{enumerate}
\item[4:] A Lie algebra homomorphism $R : \frh^\theta \to \frg \frl(W)$.
\end{enumerate}
(Using the equivalence between $\bbZ[\Gamma_k]$-modules of finite cardinality and commutative finite $k$-groups, $\rho$ corresponds to a homomorphism $\widetilde{V} \to \GL(W)$ of $k$-groups.)

\bigskip

Let $\widetilde{\Lambda} = \Lambda \times_V \widetilde{V}$, a central extension of $\Lambda$ by $\{ \pm 1 \}$. Let $\Gamma \subset \Lambda$ be the set of roots, and $\widetilde{\Gamma} \subset \widetilde{\Lambda}$ its inverse image. Following Lurie \cite{Lur01}, we define $L'$ to be the free abelian group on symbols $X_{\widetilde{\gamma}}$ for $\widetilde{\gamma} \in \widetilde{\Gamma}$, modulo the relation $X_{\widetilde{\gamma}} = - X_{-\widetilde{\gamma}}$. (Thus $\{ \widetilde{\gamma}, -\widetilde{\gamma} \}$ is the inverse image in $\widetilde{\Gamma}$ of $\gamma \in \Gamma$.) We set $L = \Lambda^\vee \oplus L'$, and define a bracket $[ \cdot, \cdot ] : L \times L \to L$ by the formulae:
\begin{itemize} \item $[\lambda, \lambda'] = 0$ for all $\lambda,$ $\lambda' \in \Lambda^\vee$.
\item $[\lambda, X_{\widetilde{\gamma}}] = -[X_{\widetilde{\gamma}}, \lambda] = \langle \lambda, \gamma \rangle X_{\widetilde{\gamma}}$ for $\lambda \in \Lambda^\vee$.
\item $[X_{\widetilde{\gamma}}, X_{\widetilde{\gamma}'}] = X_{\widetilde{\gamma}\widetilde{\gamma}'}$ if $\gamma + \gamma' \in \Gamma$.
\item $[X_{\widetilde{\gamma}}, X_{\widetilde{\gamma}'}] = \epsilon_{\widetilde{\gamma}\widetilde{\gamma}'} \gamma$ if $\gamma + \gamma' = 0$. (By definition, $\epsilon_{\widetilde{\gamma}\widetilde{\gamma}'} = \widetilde{\gamma}\widetilde{\gamma}' \in \{ \pm 1\} \subset \bbZ$.)
\item $[X_{\widetilde{\gamma}}, X_{\widetilde{\gamma}'}] = 0$ otherwise.
\end{itemize}

\begin{theorem} \begin{enumerate} \item $L$ is a Lie algebra over $\bbZ$. There is a natural action of $\Gamma_k$ on $L$, respecting the Lie bracket $[ \cdot, \cdot ]$.
\item Let $\frh = (L \otimes_k k^s)^{\Gamma_k}$. Then $\frh$ is a simple Lie algebra over $k$ of Dynkin type equal to the type of the root lattice $\Lambda$.
\end{enumerate}
\end{theorem}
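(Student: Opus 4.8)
The plan is to verify the claimed structure in two stages, exactly paralleling the two parts of the statement. For part (1), the first task is to check that the bracket $[\cdot,\cdot]$ defined by the five bullet-point formulae makes $L = \Lambda^\vee \oplus L'$ into a Lie algebra over $\bbZ$. Antisymmetry is built into the definitions, so the real content is the Jacobi identity. I would verify this by a case analysis on how many of the three arguments lie in $\Lambda^\vee$ versus $L'$: the cases with two or three arguments in $\Lambda^\vee$ are trivial (all relevant brackets vanish), the case with one argument in $\Lambda^\vee$ reduces to the additivity $\langle\lambda,\gamma+\gamma'\rangle = \langle\lambda,\gamma\rangle+\langle\lambda,\gamma'\rangle$ together with the multiplicativity of $\widetilde\gamma\mapsto X_{\widetilde\gamma}$, and the genuinely delicate case is three arguments $X_{\widetilde\gamma}, X_{\widetilde\gamma'}, X_{\widetilde\gamma''}$ in $L'$. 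Here one subdivides further according to which pairwise sums $\gamma+\gamma'$, $\gamma'+\gamma''$, $\gamma''+\gamma$ and which total sum $\gamma+\gamma'+\gamma''$ lie in $\Gamma \cup \{0\}$; in each subcase the identity becomes a statement about the sign cocycle $\epsilon$, which holds because $\widetilde\Lambda$ is an honest group and the $X$'s are defined compatibly with multiplication in $\widetilde\Gamma$. This is essentially Lurie's computation and I would cite \cite{Lur01} for the bulk of it, indicating only the key compatibility. The $\Gamma_k$-action on $L$ comes from the given action on $\Lambda$ (hence on $\Lambda^\vee$ and on $\Gamma$) together with the action on $\widetilde V$ (hence on $\widetilde\Gamma$, since $\widetilde\Lambda = \Lambda\times_V\widetilde V$), and it respects the bracket because every structure constant in the five formulae is built from $\Gamma_k$-equivariant data; this is a direct check.

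For part (2), I would first identify $L\otimes_\bbZ \bbC$ (or $L\otimes_\bbZ k^s$) with the Chevalley Lie algebra $\frg_\Lambda$ of type $\Lambda$. The point is that forgetting the extension data, $L'\otimes\bbC$ has a basis indexed by $\Gamma$ and $\Lambda^\vee\otimes\bbC$ is the Cartan, and the bracket formulae are precisely the Chevalley relations with a particular choice of structure constants $\epsilon_{\widetilde\gamma\widetilde\gamma'}$ extracted from the double cover $\widetilde V$. Since any two choices of signs satisfying the Chevalley sign conditions give isomorphic Lie algebras, $L\otimes k^s \cong \frg_\Lambda\otimes k^s$ is split semisimple of the correct type, and simple because $\Gamma$ is irreducible. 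Then $\frh = (L\otimes_\bbZ k^s)^{\Gamma_k}$ — here I read $L$ as already a $\bbZ$-form and $L\otimes_k k^s$ as shorthand for $L\otimes_\bbZ k^s$ with its semilinear $\Gamma_k$-action — is a $k$-form of a simple Lie algebra, hence itself a simple Lie algebra over $k$ (absolutely simple, in fact), of the stated Dynkin type. The descent step uses that $\Gamma_k$ acts through a finite quotient by the given homomorphism to $W(\Lambda)$, so ordinary Galois descent for vector spaces applies and $\dim_k\frh = \rank_\bbZ L$; that the bracket descends is immediate from part (1).

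The main obstacle is the sign bookkeeping in the Jacobi identity for three root-vector arguments: one must track how $\epsilon_{\widetilde\gamma\widetilde\gamma'}$ and the products $\widetilde\gamma\widetilde\gamma'\widetilde\gamma''$ in $\widetilde\Gamma$ interact with the relation $X_{\widetilde\gamma} = -X_{-\widetilde\gamma}$, and verify the required cancellations in each combinatorial subcase. This is exactly the place where the hypotheses on $\widetilde V$ — that it is a genuine central extension of $V$ by $\{\pm1\}$ and that $\widetilde v^2 = (-1)^{\langle v,v\rangle/2}$ — get used, since these are what guarantee the extracted structure constants satisfy the Chevalley conditions. I would handle this by invoking Lurie's treatment \cite{Lur01} for the detailed verification and flagging which compatibility of $\widetilde V$ is the crux, rather than reproducing the full case analysis here. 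Everything else — the $\Gamma_k$-equivariance, the identification of the type, and the descent to $k$ — is routine once this core computation is in hand.
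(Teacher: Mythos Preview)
Your proposal is correct and follows essentially the same approach as the paper: cite Lurie \cite{Lur01} for the verification that $L$ is a Lie algebra over $\bbZ$, define the $\Gamma_k$-action via the given actions on $\Lambda$ and $\widetilde\Gamma$ and check equivariance directly from the bracket formulae, and then invoke Galois descent for part (2). The paper's proof is simply terser, deferring the entire Jacobi computation to \cite[\S 3.1]{Lur01} without even sketching the case division you outline.
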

\begin{proof} \begin{enumerate} \item That $L$ is a Lie algebra over $\bbZ$ of the required type follows from \cite[\S 3.1]{Lur01}. The Galois group $\Gamma_k$ acts on $\Lambda$ and on $\widetilde{\Gamma}$ by the given data. We make it act on $L = \Lambda \oplus L'$ by its standard action on $\Lambda$ and on $L'$ by permuting basis vectors $X_{\widetilde{\gamma}},$ $\widetilde{\gamma} \in \widetilde{\Gamma}$. It is immediate from the definition that this respects the bracket.
\item By Galois descent, the natural map $\frh_{k^s} \to L \otimes_k k^s$ is an isomorphism. The result follows immediately from this.
\end{enumerate}
\end{proof}
Let $H$ denote the simple adjoint group over $k$ with Lie algebra $\frh$. Let $\frt = (\Lambda^\vee \otimes_k k^s)^{\Gamma_k} \subset \frh$; it is the Lie algebra of a maximal torus $T$ of $H$, whose module of characters $X^\ast(T_{k^s})$ is identified with the $\bbZ[\Gamma_k]$-module $\Lambda$. In particular, there is an isomorphism of $\bbZ[\Gamma_k]$-modules $T[2](k^s) \cong \Lambda^\vee / 2 \Lambda^\vee \cong V^\vee$.

We now define the involution $\theta$. Given $\widetilde{\gamma} \in \widetilde{\Gamma}$, we define $Y_{\widetilde{\gamma}} = X_{\widetilde{\gamma}^{-1}}$. By definition, then, $[X_{\widetilde{\gamma}}, Y_{\widetilde{\gamma}}] = \gamma \in \Lambda$. It easy to check that $Y_{-\widetilde{\gamma}} = -Y_{\widetilde{\gamma}}$. We define an involution $\sigma : L \to L$ by taking $\sigma$ to be multiplication by $-1$ on $\Lambda$ and by taking $\sigma(X_{\widetilde{\gamma}}) = -Y_{\widetilde{\gamma}}$.
\begin{proposition} \begin{enumerate} \item $\sigma$ is a well-defined Lie algebra involution, and respects the action of the group $\Gamma_k$.
\item Let $\theta$ denote the involution of $\frh$ induced by $\sigma$ by functoriality. Then $\tr \theta = - \rank \frh$.
\end{enumerate}
\end{proposition}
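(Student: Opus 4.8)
The plan is to verify part (1) by checking directly that $\sigma$ respects the presentation of $L$ and squares to the identity, and to deduce part (2) by computing a trace after extension of scalars to $k^s$, using the isomorphism $\frh_{k^s} \cong L \otimes_{\bbZ} k^s$ and a basis adapted to the splitting $L = \Lambda^\vee \oplus L'$.

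For part (1), I would first record that $\sigma$ is well defined on $L'$: since $-1$ is central, $(-\widetilde{\gamma})^{-1} = -(\widetilde{\gamma}^{-1})$, so $\sigma(-X_{-\widetilde{\gamma}}) = X_{(-\widetilde{\gamma})^{-1}} = X_{-(\widetilde{\gamma}^{-1})} = -X_{\widetilde{\gamma}^{-1}} = \sigma(X_{\widetilde{\gamma}})$, which is compatible with the relation $X_{\widetilde{\gamma}} = -X_{-\widetilde{\gamma}}$. That $\sigma^2 = \mathrm{id}$ is clear on $\Lambda^\vee$ and follows on $L'$ from $\sigma^2(X_{\widetilde{\gamma}}) = -\sigma(Y_{\widetilde{\gamma}}) = -\sigma(X_{\widetilde{\gamma}^{-1}}) = X_{(\widetilde{\gamma}^{-1})^{-1}} = X_{\widetilde{\gamma}}$. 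Compatibility with $\Gamma_k$ holds because $\Gamma_k$ acts $\bbZ$-linearly on $\Lambda^\vee$ and by group automorphisms on $\widetilde{\Gamma}$, hence commutes with inversion $\widetilde{\gamma} \mapsto \widetilde{\gamma}^{-1}$ and with multiplication by $-1$.

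The substance of (1) is that $\sigma$ preserves the five bracket formulae. The formula $[\lambda,\lambda']=0$ and the vanishing case are trivially preserved, and $[\lambda, X_{\widetilde{\gamma}}] = \langle \lambda, \gamma\rangle X_{\widetilde{\gamma}}$ is preserved once one notes that the root underlying $\widetilde{\gamma}^{-1}$ is $-\gamma$. The two cases that require care, $\gamma + \gamma' \in \Gamma$ and $\gamma + \gamma' = 0$, involve products in $\widetilde{\Lambda} = \Lambda \times_V \widetilde{V}$, and here I would use the standard fact that the commutator pairing is $\widetilde{\gamma}\widetilde{\gamma}'\widetilde{\gamma}^{-1}\widetilde{\gamma}'^{-1} = (-1)^{\langle \gamma, \gamma'\rangle}$, which is forced by the imposed relation $\widetilde{v}^{\,2} = (-1)^{\langle v, v\rangle/2}$ on $\widetilde{V}$ (the polar form of $v \mapsto \langle v,v\rangle/2 \bmod 2$ is the reduction of $\langle\cdot,\cdot\rangle$). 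When $\gamma + \gamma' \in \Gamma$, simple-lacedness gives $\langle \gamma, \gamma'\rangle = -1$, whence $\widetilde{\gamma}^{-1}\widetilde{\gamma}'^{-1} = (-1)^{\langle\gamma,\gamma'\rangle}(\widetilde{\gamma}\widetilde{\gamma}')^{-1} = -(\widetilde{\gamma}\widetilde{\gamma}')^{-1}$, so $[\sigma X_{\widetilde{\gamma}}, \sigma X_{\widetilde{\gamma}'}] = [Y_{\widetilde{\gamma}}, Y_{\widetilde{\gamma}'}] = X_{\widetilde{\gamma}^{-1}\widetilde{\gamma}'^{-1}} = -X_{(\widetilde{\gamma}\widetilde{\gamma}')^{-1}} = -Y_{\widetilde{\gamma}\widetilde{\gamma}'} = \sigma(X_{\widetilde{\gamma}\widetilde{\gamma}'}) = \sigma[X_{\widetilde{\gamma}}, X_{\widetilde{\gamma}'}]$; when $\gamma + \gamma' = 0$ the commuting of $\widetilde{\gamma}$ with $\widetilde{\gamma}^{-1}$ gives $\epsilon_{\widetilde{\gamma}^{-1}\widetilde{\gamma}'^{-1}} = \epsilon_{\widetilde{\gamma}\widetilde{\gamma}'}$, and since $\sigma$ acts by $-1$ on $\Lambda^\vee$ both sides again agree. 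This proves $\sigma$ is an involutive Lie algebra automorphism respecting $\Gamma_k$, and by Galois descent it induces such a $\theta$ on $\frh$.

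For part (2), $\theta$ base-changes to $\sigma \otimes k^s$ under $\frh_{k^s} \cong L \otimes_{\bbZ} k^s$, and since the trace of a $\bbQ$-rational operator is independent of the characteristic-zero field of scalars, $\tr \theta = \tr(\sigma \otimes \bbQ \colon L_\bbQ \to L_\bbQ)$. I split $L_\bbQ = \Lambda^\vee_\bbQ \oplus L'_\bbQ$: on $\Lambda^\vee_\bbQ$, which has dimension $\rank \frh$, $\sigma$ is $-\mathrm{id}$, contributing $-\rank \frh$; and on $L'_\bbQ$ I choose a system $\Gamma^+$ of positive roots and lifts $\widetilde{\gamma} \in \widetilde{\Gamma}$ of the $\gamma \in \Gamma^+$, so that $\{X_{\widetilde{\gamma}}, Y_{\widetilde{\gamma}} : \gamma \in \Gamma^+\}$ is a basis and, on the plane spanned by $X_{\widetilde{\gamma}}$ and $Y_{\widetilde{\gamma}}$, $\sigma$ sends $X_{\widetilde{\gamma}} \mapsto -Y_{\widetilde{\gamma}}$ and $Y_{\widetilde{\gamma}} \mapsto -X_{\widetilde{\gamma}}$, a traceless operator. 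Hence $\tr(\sigma|_{L'}) = 0$ and $\tr \theta = -\rank \frh$. The only real obstacle in all of this is the bookkeeping of signs in the double cover $\widetilde{\Lambda}$ — pinning the commutator pairing down from the relation on $\widetilde{V}$ and keeping track of the two-fold ambiguity in the lift $\widetilde{\gamma}$ of a root — after which everything is formal.
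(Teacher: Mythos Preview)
Your proof is correct and follows essentially the same approach as the paper. The paper's argument for part (1) singles out exactly the case $\gamma+\gamma'\in\Gamma$ and reduces it to the anticommutation $\widetilde{\gamma}\widetilde{\gamma}'=-\widetilde{\gamma}'\widetilde{\gamma}$, just as you do via the commutator pairing and $\langle\gamma,\gamma'\rangle=-1$; for part (2) the paper simply says ``this follows because $\theta$ acts as $-1$ on $\frt$,'' which is your explicit root-space computation compressed into one line (an involution acting by $-1$ on a Cartan subalgebra swaps each $\frh_\gamma$ with $\frh_{-\gamma}$, so the trace off $\frt$ vanishes).
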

\begin{proof}
\begin{enumerate} \item We must check that $\sigma$ preserves the relations defining $[\cdot, \cdot]$. Let us show that $\sigma[X_{\widetilde{\gamma}}, X_{\widetilde{\gamma}'}] = \sigma X_{\widetilde{\gamma}\widetilde{\gamma}'} = - Y_{\widetilde{\gamma} \widetilde{\gamma}'}$ is equal to $[\sigma X_{\widetilde{\gamma}}, \sigma X_{\widetilde{\gamma}'}] = [X_{\widetilde{\gamma}^{-1}}, X_{\widetilde{\gamma}'^{-1}}] = X_{\widetilde{\gamma}^{-1} \widetilde{\gamma}'^{-1}}$, when $\gamma + \gamma' \in \Gamma$. Equivalently, we must show that $\widetilde{\gamma} \widetilde{\gamma}'= - \widetilde{\gamma}' \widetilde{\gamma}$. By the definition of $\widetilde{\Lambda}$, it is equivalent to show that $\langle \gamma, \gamma' \rangle$ is odd. Since we work in a simply laced root system, this is implied by the condition that $\gamma + \gamma'$ is a root.
\item This follows because $\theta$ acts as $-1$ on $\frt$.
\end{enumerate}
\end{proof}
We define $G = (H^\theta)^\circ$. We define $N_V$ to be the image of the natural homomorphism $V \to V^\vee$; it is a $\bbZ[\Gamma_k]$-module, and the induced symplectic form on $N_V$ is non-degenerate and $\Gamma_k$-equivariant. The isomorphism $T[2] \cong V^\vee$ restricts to an isomorphism $(T[2] \cap G)\cong N_V$ (cf. \cite[Corollary 2.8]{Tho13}).

It remains to define, given a finite-dimensional $k$-vector space $W$ and a Galois-equivariant homomorphism $\rho : \widetilde{V} \to \GL(W_{k^s})$ such that $\rho(-1) = - \mathrm{id}_W$, a Lie algebra homomorphism $R : \frg \to \frg \frl(W)$. Let us first assume that $k = k^s$. Then the Lie algebra $\frg$ is spanned by the elements $X_{\widetilde{\gamma}} + X_{-\widetilde{\gamma}^{-1}} = Z_{\widetilde{\gamma}}$, say. Let $\pi : \widetilde{\Gamma} \to \widetilde{V}$ denote the natural map. We define a morphism $R : \frg \to \frg\frl(W)$ of $k$-vector spaces by the formula
\[ R(Z_{\widetilde{\gamma}}) = \rho(\pi(\widetilde{\gamma}))/2. \]
This is well-defined since $Z_{\widetilde{\gamma}} = -Z_{-\widetilde{\gamma}} = -Z_{\widetilde{\gamma}^{-1}}$, and  $\pi(\widetilde{\gamma}) = (-1)^{\langle \gamma, \gamma \rangle/2} \pi(\widetilde{\gamma})^{-1} = - \pi(\widetilde{\gamma})^{-1}$. In the case $k \neq k^s$, this defines a homomorphism $\frg_{k^s} \to \frg\frl(W_{k^s})$ which commutes with the action of $\Gamma_k$, and we write $R : \frg \to \frg\frl(W)$ for the homomorphism obtained by Galois descent.
\begin{proposition} $R : \frg \to \frg \frl(W)$ is a Lie algebra homomorphism. \label{pro:lie_alg_rep}
\end{proposition}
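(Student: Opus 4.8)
The plan is to reduce everything to the case $k = k^s$, where $R$ is defined by the explicit formula $R(Z_{\widetilde\gamma}) = \rho(\pi(\widetilde\gamma))/2$, and then verify the bracket relation $R([Z_{\widetilde\gamma}, Z_{\widetilde\delta}]) = [R(Z_{\widetilde\gamma}), R(Z_{\widetilde\delta})]$ by checking it case-by-case according to the possible values of $\gamma + \delta$. Since $\frg_{k^s}$ is spanned by the $Z_{\widetilde\gamma}$ and the construction of $R$ over a general $k$ is obtained from the $k^s$-construction by Galois descent (which we have already arranged to commute with the $\Gamma_k$-action), it suffices to treat the split case; I would say this explicitly at the outset and then assume $k = k^s$ for the rest of the argument.

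Next I would record the bracket $[Z_{\widetilde\gamma}, Z_{\widetilde\delta}]$ inside $\frg$, expanding $Z_{\widetilde\gamma} = X_{\widetilde\gamma} + X_{-\widetilde\gamma^{-1}}$ and using the structure constants of $L$ from the theorem above. The key combinatorial input is the classification of the angle between two roots $\gamma, \delta$ in a simply laced root system: either $\delta = \pm\gamma$, or $\gamma + \delta$ is a root (equivalently $\langle\gamma,\delta\rangle = -1$), or $\gamma - \delta$ is a root (equivalently $\langle\gamma,\delta\rangle = 1$), or $\gamma \pm \delta$ are both non-roots and $\langle\gamma,\delta\rangle = 0$. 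In the first case one gets a term in $\frt$, which must map to $0$ under $R$; here I would need that $\rho$ kills the relevant element, which follows because $\rho(-1) = -\mathrm{id}_W$ forces $\rho(\pi(\widetilde\gamma))^2 = \rho(\pi(\widetilde\gamma)^2) = \rho((-1)^{\langle\gamma,\gamma\rangle/2}) = \rho(-1) = -\mathrm{id}_W$, so $\rho(\pi(\widetilde\gamma))$ has order $4$ and its square is $-\mathrm{id}_W$ — and one checks the bracket of $Z_{\widetilde\gamma}$ with itself, or with $Z_{\widetilde\gamma^{-1}}$, lands in $\frt$ where $R$ is not a priori defined, so I must also confirm that the span of the $Z_{\widetilde\gamma}$ together with these relations really is $\frg$ and that $R$ is well-defined on it (the text already asserts well-definedness, so I would just invoke it). In the generic non-root case both sides are zero: $[Z_{\widetilde\gamma}, Z_{\widetilde\delta}] = 0$ in $L$, and $\rho(\pi(\widetilde\gamma))$ and $\rho(\pi(\widetilde\delta))$ commute because the commutator $\pi(\widetilde\gamma)\pi(\widetilde\delta)\pi(\widetilde\gamma)^{-1}\pi(\widetilde\delta)^{-1}$ equals $(-1)^{\langle v,w\rangle}$ by Lemma~\ref{lem_weil_pairing_defined_by_commutator}-style reasoning in $\widetilde V$ (here $\langle v, w\rangle = \langle\gamma,\delta\rangle \bmod 2 = 0$), so $\rho$ of them commute, hence the factor of $1/4$ on the right matches $0$ on the left.

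The substantive case is $\langle\gamma,\delta\rangle = \pm 1$, say $\gamma + \delta \in \Gamma$. Then $[Z_{\widetilde\gamma}, Z_{\widetilde\delta}]$ unwinds, via the structure constants, to $X_{\widetilde\gamma\widetilde\delta} + X_{-(\widetilde\gamma\widetilde\delta)^{-1}} = Z_{\widetilde\gamma\widetilde\delta}$ up to cross terms coming from the $X_{-\widetilde\gamma^{-1}}$ summands; those cross terms involve $-\gamma + \delta$ etc., and one has to track which of them are roots. Applying $R$ gives $\rho(\pi(\widetilde\gamma\widetilde\delta))/2 = \rho(\pi(\widetilde\gamma)\pi(\widetilde\delta))/2 = \rho(\pi(\widetilde\gamma))\rho(\pi(\widetilde\delta))/2$. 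On the other side, $[R(Z_{\widetilde\gamma}), R(Z_{\widetilde\delta})] = [\rho(\pi(\widetilde\gamma))/2, \rho(\pi(\widetilde\delta))/2] = (\rho(\pi(\widetilde\gamma))\rho(\pi(\widetilde\delta)) - \rho(\pi(\widetilde\delta))\rho(\pi(\widetilde\gamma)))/4$. Since $\langle v,w\rangle = 1$, the two products differ by a sign: $\rho(\pi(\widetilde\delta))\rho(\pi(\widetilde\gamma)) = -\rho(\pi(\widetilde\gamma))\rho(\pi(\widetilde\delta))$, so the commutator equals $2\rho(\pi(\widetilde\gamma))\rho(\pi(\widetilde\delta))/4 = \rho(\pi(\widetilde\gamma))\rho(\pi(\widetilde\delta))/2$, matching. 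The main obstacle — and the step I would spend the most care on — is the bookkeeping of the cross terms and signs in computing $[Z_{\widetilde\gamma}, Z_{\widetilde\delta}]$ in $L$: one must verify that the contributions from $[X_{\widetilde\gamma}, X_{-\widetilde\delta^{-1}}]$, $[X_{-\widetilde\gamma^{-1}}, X_{\widetilde\delta}]$, and $[X_{-\widetilde\gamma^{-1}}, X_{-\widetilde\delta^{-1}}]$ either vanish (when the relevant sum of roots is not a root) or reassemble correctly, using the cocycle identities $\widetilde\gamma\widetilde\delta = -\widetilde\delta\widetilde\gamma$ when $\gamma+\delta$ is a root and $Y_{\widetilde\gamma} = X_{\widetilde\gamma^{-1}}$, so that the total is exactly $Z_{\widetilde{\gamma\delta}}$ (equivalently, that $R$ applied to it is $\rho(\pi(\widetilde\gamma\widetilde\delta))/2$). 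Once the split case is settled, Galois descent gives the general statement with no further work. $\qed$
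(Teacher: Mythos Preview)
Your approach is essentially the same as the paper's: reduce to $k=k^s$, then verify $R([Z_{\widetilde\gamma},Z_{\widetilde\delta}])=[R(Z_{\widetilde\gamma}),R(Z_{\widetilde\delta})]$ by a case split on $\langle\gamma,\delta\rangle\in\{0,\pm1,\pm2\}$, using $\rho(-1)=-\mathrm{id}_W$ and the commutator identity in $\widetilde V$. Your handling of the cases $\langle\gamma,\delta\rangle=0$ and $\langle\gamma,\delta\rangle=\pm1$ matches the paper exactly, and the ``cross terms'' you worry about in the $\pm1$ case do cancel cleanly: when $\gamma+\delta$ is a root, $\gamma-\delta$ is not, so only $[X_{\widetilde\gamma},X_{\widetilde\delta}]$ and $[X_{\widetilde\gamma^{-1}},X_{\widetilde\delta^{-1}}]$ survive, and the identity $\widetilde\gamma^{-1}\widetilde\delta^{-1}=-\widetilde\delta^{-1}\widetilde\gamma^{-1}$ gives $[Z_{\widetilde\gamma},Z_{\widetilde\delta}]=Z_{\widetilde\gamma\widetilde\delta}$ on the nose.

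One small correction: in the case $\langle\gamma,\delta\rangle=\pm2$ you say the bracket lands in $\frt$ and worry that $R$ is not defined there. In fact there is no issue at all: since $Z_{\widetilde\gamma^{-1}}=X_{\widetilde\gamma^{-1}}-X_{\widetilde\gamma}=-Z_{\widetilde\gamma}$ (and $Z_{-\widetilde\gamma}=-Z_{\widetilde\gamma}$), when $\delta=\pm\gamma$ the element $Z_{\widetilde\delta}$ is a scalar multiple of $Z_{\widetilde\gamma}$ and the bracket is simply $0$. The paper observes this directly. No extension of $R$ to $\frt$ is needed (and indeed $\frt\cap\frg=0$, so such an extension would be vacuous).
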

\begin{proof} We can again assume that $k = k^s$. We must show that, given $\widetilde{\gamma},$ $\widetilde{\gamma}' \in \widetilde{\Gamma}$, we have
\[ R([Z_{\widetilde{\gamma}}, Z_{\widetilde{\gamma}'}]) = [R(Z_{\widetilde{\gamma}}), R(Z_{\widetilde{\gamma}'})]. \]
We now break up into cases according to the value of $\langle \gamma, \gamma' \rangle$.
\begin{enumerate} \item If $\langle \gamma, \gamma' \rangle = \pm 2$, then $\gamma' = \pm \gamma$, hence $\widetilde{\gamma}' = \pm \widetilde{\gamma}^{\pm1}$, and both sides of the above equation are zero.
\item If $\langle \gamma, \gamma' \rangle = \pm 1$, then $\gamma \mp \gamma'$ is a root. Let us assume for simplicity that $\langle \gamma, \gamma' \rangle = -1$, so that $\gamma + \gamma'$ is a root, and $[ Z_{\widetilde{\gamma}}, Z_{\widetilde{\gamma}'}] = Z_{\widetilde{\gamma} \widetilde{\gamma}'}$. We must show that
\[ \rho(\pi(\widetilde{\gamma}\widetilde{\gamma}'))/2 = \rho(\pi(\widetilde{\gamma})) \cdot \rho(\pi(\widetilde{\gamma}'))/4 - \rho(\pi(\widetilde{\gamma}')) \cdot \rho(\pi(\widetilde{\gamma}))/4. \]
This follows from the fact that $\widetilde{\gamma}' \widetilde{\gamma} = (-1)^{\langle \gamma, \gamma' \rangle} \widetilde{\gamma} \widetilde{\gamma}' = - \widetilde{\gamma} \widetilde{\gamma}'$ and $\rho(-1) = -\mathrm{id}_W$.
\item If $\langle \gamma, \gamma' \rangle = 0$ then neither of $\gamma \pm \gamma'$ is a root, and the left hand side of the above equation is zero. On the other hand, $\pi(\widetilde{\gamma})$ and $\pi(\widetilde{\gamma}')$ commute, so the right hand side is also zero.
\end{enumerate}
This concludes the proof.
\end{proof}

The above constructions are evidently functorial in $\widetilde{V}$, in the following sense: given $\widetilde{V}$, $\widetilde{V}_B$ satisfying the conditions at the beginning of this section, and a $\Gamma_k$-equivariant isomorphism $f : \widetilde{V} \to \widetilde{V}_B$ , we obtain an isomorphism of associated simple adjoint groups $F : H \cong H_B$, intertwining $\theta,$ $\theta_B$, and restricting to an isomorphism $T \to T_B$ which induces the identity on $\Lambda$. In this connection, we have the following lemma.

\begin{lemma}\label{lem_functoriality_of_isomorphisms} \begin{enumerate} \item Let us write $\Aut(\widetilde{V}; V)$ for the group of automorphisms of $\widetilde{V}$ leaving the central subgroup $\{ \pm 1 \}$ invariant and inducing the identity on $V$. Then there is a canonical isomorphism $V^\vee \cong \Aut(\widetilde{V}; V)$, given by $f \mapsto (\widetilde{v} \mapsto (-1)^{f(v)} \cdot \widetilde{v})$.
\item Let $f \in V^\vee$, and let $F$ denote the induced automorphism of the triple $(H, \theta, T)$. Let $s$ denote the image of $f$ under the canonical isomorphism $V^\vee \cong T[2](k^s)$. Then $F = \Ad(s)$.
\end{enumerate}
\end{lemma}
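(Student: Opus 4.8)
The plan is to prove the two parts in sequence, with part (1) being essentially bookkeeping and part (2) being the real content. For part (1), I would first check that the displayed formula $f \mapsto (\widetilde{v} \mapsto (-1)^{f(v)} \cdot \widetilde{v})$ does land in $\Aut(\widetilde{V}; V)$: the map clearly fixes $\{\pm 1\}$ and induces the identity on $V$, and it is a group homomorphism precisely because $f$ is additive (so $(-1)^{f(v_1+v_2)} = (-1)^{f(v_1)}(-1)^{f(v_2)}$) and $\{\pm 1\}$ is central in $\widetilde{V}$. Conversely, given any $\phi \in \Aut(\widetilde{V};V)$, the assignment $\widetilde{v} \mapsto \phi(\widetilde{v}) \widetilde{v}^{-1}$ is well-defined on $V$ (independent of the lift, since replacing $\widetilde{v}$ by $-\widetilde{v}$ changes nothing) and valued in $\{\pm 1\}$; one checks it is additive using centrality of $\{\pm 1\}$, so it defines an element of $V^\vee = \Hom(V, \bbF_2)$ (after identifying $\{\pm 1\} \cong \bbF_2$). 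These two constructions are mutually inverse, giving the canonical isomorphism. The map is manifestly $\Gamma_k$-equivariant.

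For part (2), the key observation is that $\Ad(s)$ for $s \in T[2]$ also induces the identity on $\Lambda = X^\ast(T_{k^s})$ and on $\frt$, so both $F$ and $\Ad(s)$ are automorphisms of the triple $(H, \theta, T)$ acting trivially on $\frt$; hence they differ by an automorphism that is trivial on $\frt$, and such an automorphism of $\frh$ is determined by its effect on the root vectors. So it suffices to compare $F$ and $\Ad(s)$ on the generators $Z_{\widetilde\gamma} = X_{\widetilde\gamma} + X_{-\widetilde\gamma^{-1}}$ of $\frg$ (working over $k^s$), and more generally on the $X_{\widetilde\gamma}$ spanning the root spaces of $\frh$. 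By the functoriality described just before the lemma, $F$ acts on $X_{\widetilde\gamma}$ via the induced automorphism of $\widetilde\Gamma \subset \widetilde\Lambda = \Lambda \times_V \widetilde V$: since $F$ comes from the automorphism $\widetilde v \mapsto (-1)^{f(v)}\widetilde v$ of $\widetilde V$ and the identity on $\Lambda$, it sends $\widetilde\gamma \mapsto (-1)^{f(\gamma \bmod 2\Lambda)} \widetilde\gamma$, hence $X_{\widetilde\gamma} \mapsto (-1)^{f(\bar\gamma)} X_{\widetilde\gamma}$, where $\bar\gamma$ denotes the image of $\gamma$ in $V$.

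On the other side, $\Ad(s)$ acts on the root space $\frh_\gamma$ by the scalar $\gamma(s) \in k^s$; since $s \in T[2]$, this scalar is $\pm 1$, and under the identification $T[2](k^s) \cong \Lambda^\vee/2\Lambda^\vee$ together with the pairing $\Lambda \times \Lambda^\vee/2\Lambda^\vee \to \bbF_2$, we have $\gamma(s) = (-1)^{\langle \bar\gamma, \bar s\rangle}$ where $\bar s \in \Lambda^\vee/2\Lambda^\vee \cong V^\vee$ is the image of $s$. Thus I need: $(-1)^{f(\bar\gamma)} = (-1)^{\langle \bar\gamma, \bar s\rangle}$ for all roots $\gamma$, i.e.\ $f = \langle -, \bar s\rangle$ as elements of $V^\vee$ — but this is exactly the statement that $\bar s$ is the image of $f$ under the canonical isomorphism $V^\vee \cong T[2](k^s)$, which is our hypothesis relating $s$ and $f$. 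Since the roots $\bar\gamma$ span $V$ (the root system generates $\Lambda$, hence spans $\Lambda/2\Lambda$), the two characters of $V$ agreeing on all $\bar\gamma$ forces $F = \Ad(s)$ on all root spaces, and therefore $F = \Ad(s)$.

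The main obstacle, and the point requiring care, is pinning down the precise compatibility between the two a priori different identifications $V^\vee \cong T[2](k^s)$ in play — the one coming from part (1) via $\Aut(\widetilde V; V)$ and the one used implicitly in the statement "$s$ is the image of $f$ under the canonical isomorphism $V^\vee \cong T[2](k^s)$" — and checking that the sign conventions (the pairing $\Lambda \times \Lambda^\vee \to \bbZ$ reduced mod 2, versus the pairing $N_V$ carries, versus the Weil-type pairing) all match up so that no stray sign appears. Everything else is a direct computation with the explicit Chevalley-type basis $\{X_{\widetilde\gamma}\}$ constructed above.
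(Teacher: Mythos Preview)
Your proposal is correct and follows essentially the same approach as the paper: for part (2) you compute the effect of $F$ on the root vectors $X_{\widetilde\gamma}$ via the induced automorphism of $\widetilde\Gamma$, and compare with the action of $\Ad(s)$ via the character $\gamma(s)$, reducing to the identity $(-1)^{f(\gamma)} = \gamma(s)$ which is the very definition of the isomorphism $V^\vee \cong T[2](k^s)$. Your closing worry about reconciling ``two a priori different identifications $V^\vee \cong T[2](k^s)$'' is unfounded---there is only one such identification in play (coming from $X^\ast(T_{k^s}) = \Lambda$, so $T[2](k^s) = \Hom(\Lambda/2\Lambda,\mu_2) = V^\vee$); part (1) gives an isomorphism $V^\vee \cong \Aut(\widetilde V;V)$, which is a different target, so no compatibility check is needed.
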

\begin{proof} \begin{enumerate} \item Immediate.
\item The automorphism $f$ induces the automorphism $\widetilde{\gamma} \mapsto (-1)^{f(\gamma)}\widetilde{\gamma}$ of $\widetilde{\Gamma}$. We must therefore show that $(-1)^{f(\gamma)} = \langle \gamma, s \rangle$. However, this follows from the definition of the element $s$.
\end{enumerate}
\end{proof}

\section{Plane quartic curves}\label{sec_plane_quartic_curves}

Let $k$ be a field of characteristic 0 and $C$ a smooth (geometrically connected, projective) non-hyperelliptic curve of genus 3 over $k$. The canonical embedding then gives $C$ as a plane quartic curve in $\bbP^2_k$; let us write $\pi : S \to \bbP^2_k$ for the double cover of $\bbP^2_k$ branched over $S$. Then $S$ is a del Pezzo surface of degree 2, i.e.\ a smooth surface with $-K_S$ ample and $K_S^2 = 2$. (We note that if $k \neq k^s$, then $S$ depends, up to isomorphism, on a choice of defining equation of $C$; a particular choice will be specified below. The set of isomorphism classes is a torsor for $k^\times/(k^\times)^2$.)

\begin{proposition}\label{prop_geometric_root_lattice}
\begin{enumerate} \item The group $\Pic(S_{k^s})$ is free of rank 8 over $\bbZ$. Its natural intersection pairing is unimodular.
\item The sublattice $\Lambda = K_S^\perp \subset \Pic(S_{k^s})$ is a root lattice of type $E_7$.
\item Suppose that $\ell$ is a bitangent line of $C$ in its canonical embedding. Then $\pi^{-1}(\ell_{k^s}) = e \cup f$ is a union of two smooth curves of genus 0. Define $\Lambda_\ell = \langle e, f \rangle^\perp \subset \Lambda$. Then $\Lambda_\ell$ is a root lattice of type $E_6$.
\item There are natural isomorphisms $\Lambda^\vee \cong \Pic(S_{k^s})/\bbZ K_S$ and $\Lambda_\ell^\vee \cong \Pic(S_{k^s})/\langle e, f \rangle$.
\item Each of $\Pic(S_{k^s})$, $\Lambda$, and $\Lambda_\ell$ (when it is defined) has a natural structure of $\bbZ[\Gamma_k]$-module, which respects the intersection pairings.
\end{enumerate}
\end{proposition}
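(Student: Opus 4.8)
The plan is to work over $k^s$ first and establish the lattice-theoretic statements, then descend the Galois action in the last step. The key inputs are all classical facts about del Pezzo surfaces of degree $2$ together with the blow-down to a cubic surface sketched in the introduction.

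For part (1): a del Pezzo surface of degree $2$ over $k^s$ is isomorphic to $\bbP^2$ blown up at $7$ points in general position, so $\Pic(S_{k^s}) \cong \bbZ^8$, with basis the pullback $h$ of a line and the exceptional divisors $e_1, \dots, e_7$, and intersection form $\diag(1, -1, \dots, -1)$, which is unimodular. For part (2): one computes $K_S = -3h + \sum e_i$, so $K_S^2 = 9 - 7 = 2$, and $\Lambda = K_S^\perp$ is a rank-$7$ lattice; the standard simple roots $h - e_1 - e_2 - e_3$ and $e_i - e_{i+1}$ ($1 \le i \le 6$) span $\Lambda$, all have self-intersection $-2$, and their Gram matrix is (minus) the Cartan matrix of $E_7$. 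After rescaling the form by $-1$ to make it positive definite, this exhibits $\Lambda$ as a root lattice of type $E_7$ in the sense of the Notation section; the fact that the norm-$2$ vectors generate is the statement that the $56$ lines on the associated cubic surface — equivalently, the exceptional classes — span. For part (3): this is the classical relation between the bitangent $\ell$ and a $(-1)$-curve. The strict transform $\pi^{-1}(\ell)$, being the preimage of a line tangent to the branch quartic at two points, splits as $e \cup f$ with $e, f$ smooth rational curves, $e \cdot f = 2$, $e^2 = f^2 = -1$, and $e + f = -K_S$ (since $\pi^* \cO(1) = -K_S$). Then $\langle e, f \rangle$ is a rank-$2$ sublattice of $\Pic(S_{k^s})$, and $\langle e, f\rangle^\perp \subset \Lambda$ is the orthogonal complement inside $E_7$ of a vector of the appropriate type; blowing down $f$ realizes this complement as $K_{S'}^\perp$ on the cubic surface $S'$, which is the $E_6$ root lattice. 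Again one checks that norm-$2$ vectors generate (the $27$ lines on the cubic).

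For part (4): since $\Pic(S_{k^s})$ is unimodular and $K_S$ is a primitive vector, the inclusion $\Lambda = K_S^\perp \hookrightarrow \Pic(S_{k^s})$ induces, after dualizing, a surjection $\Pic(S_{k^s}) = \Pic(S_{k^s})^\vee \twoheadrightarrow \Lambda^\vee$ whose kernel is exactly $(\Lambda^\perp)$ inside $\Pic(S_{k^s})$, i.e. $\bbZ K_S$ (here one uses that $K_S$ is not divisible and that $\Lambda$ has no isotropic vectors so $\Lambda \cap \bbZ K_S = 0$, forcing $\Lambda^{\perp\perp} = \Lambda$). This gives $\Lambda^\vee \cong \Pic(S_{k^s})/\bbZ K_S$. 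The same argument applied to the rank-$2$ unimodular-complemented sublattice $\langle e, f \rangle$ — which one checks is a primitive, unimodular sublattice of $\Pic(S_{k^s})$ since $\begin{pmatrix} -1 & 2 \\ 2 & -1 \end{pmatrix}$ has determinant $-3$, so actually I should be more careful here: $\langle e, f\rangle$ is \emph{not} unimodular — instead I would use the identity $\langle e, f \rangle^\perp = (\bbZ e')^\perp$ inside $\Lambda$ for a suitable primitive norm-$(-2)$ class, or work directly on the cubic surface $S'$ where $\Lambda_\ell = K_{S'}^\perp \subset \Pic(S'_{k^s})$ and $\Pic(S'_{k^s})$ is unimodular, giving $\Lambda_\ell^\vee \cong \Pic(S'_{k^s})/\bbZ K_{S'} \cong \Pic(S_{k^s})/\langle e, f\rangle$ after identifying $\Pic(S')$ with the sublattice $f^\perp \subset \Pic(S)$ and tracking $K_{S'} \mapsto -K_S - \dots$; the cleanest route is to verify $\langle e, f \rangle$ and $\Pic(S_{k^s})/\langle e, f\rangle$ directly since the quotient is torsion-free of rank $6$ and the perfect pairing $\Pic/\langle e,f\rangle \times \langle e, f\rangle^\perp \to \bbZ$ follows from unimodularity of the ambient lattice and primitivity of $\langle e, f\rangle$ as a \emph{saturated} sublattice.

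For part (5): the Galois group $\Gamma_k$ acts on $S$ (once a defining equation of $C$, hence of $S$, is fixed, as remarked before the proposition), hence on $\Pic(S_{k^s})$ by functoriality, and this action preserves the intersection pairing since it is defined cohomologically/geometrically; it fixes $K_S$ (being canonical), hence preserves $\Lambda$; and since a bitangent $\ell$ defined over $k$ gives a $\Gamma_k$-stable pair $\{e, f\}$ (possibly swapped by Galois — but the \emph{set} $\langle e, f\rangle$ is stable), it preserves $\Lambda_\ell$. All the isomorphisms of (4) are then manifestly $\Gamma_k$-equivariant. I expect the main obstacle to be bookkeeping in part (4): getting the dualities and the blow-down/blow-up identifications of Picard lattices exactly right, in particular checking that $\langle e, f \rangle$ is saturated in $\Pic(S_{k^s})$ so that the quotient is torsion-free and the induced pairing on the quotient is perfect — everything else is a direct appeal to the standard structure theory of del Pezzo surfaces.
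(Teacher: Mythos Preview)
The paper's own proof is a one-line citation to Griffiths--Harris and Dolgachev, together with the remark that $S_{k^s}$ is the blow-up of $\bbP^2_{k^s}$ at $7$ points in general position. Your proposal supplies exactly the standard details that those references contain, and the overall strategy is correct.

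Two small points to clean up. In part~(2), the remark about ``$56$ lines on the associated cubic surface'' is garbled: a cubic surface has $27$ lines, the $56$ is the count of $(-1)$-curves on the degree-$2$ del Pezzo, and in any case $(-1)$-curves are not the norm-$2$ vectors of $\Lambda$. This is harmless, since the simple roots you list already form a $\bbZ$-basis of $\Lambda$, so generation is automatic. In part~(4), the suggestion to write $\langle e,f\rangle^\perp = (\bbZ e')^\perp$ inside $\Lambda$ for some norm-$(-2)$ vector $e'$ does not work: the intersection $\langle e,f\rangle \cap \Lambda$ is $\bbZ(e-f)$, and $(e-f)^2 = -6$, not $-2$. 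Your final route is the correct one: $\langle e,f\rangle$ is a \emph{saturated} rank-$2$ sublattice of the unimodular lattice $\Pic(S_{k^s})$ (check this in the blow-up basis, e.g.\ with $f = e_7$ and $e = -K_S - e_7$), whence the pairing $\Pic(S_{k^s})/\langle e,f\rangle \times \langle e,f\rangle^\perp \to \bbZ$ is perfect and gives $\Lambda_\ell^\vee \cong \Pic(S_{k^s})/\langle e,f\rangle$. The cubic-surface route you sketch also works and is arguably cleaner.
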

\begin{proof}
This is all classical; see \cite[pp. 545--549]{Gri94} and \cite[Ch. 8]{Dol12}. It is useful to note that $S_{k^s}$ can be realized as the blow-up of $\bbP^2_{k^s}$ at 7 points in general position.
\end{proof}
We define $N_C$ to be the image of the natural map $\Lambda/2\Lambda \to \Lambda^\vee / 2 \Lambda^\vee$. Viewing $C \subset S$ as the ramification locus of $\pi$, we see that there is a natural $\Gamma_k$-equivariant map $\Pic(S_{k^s}) \to \Pic(C_{k^s})$ given by restriction of line bundles.
\begin{proposition}\label{prop_plane_quartic_commutative_diagram} There is a commutative diagram of finite $k$-groups
\[ \xymatrix{ \Lambda^\vee/2\Lambda^\vee \ar[r]^-{\cong} & (\Pic(C)/\bbZ K_C)[2] \\
N_C \ar@{^{(}->}[u] \ar[r]^-{\cong} & \Pic^0(C)[2]\ar@{^{(}->}[u].} \]
\end{proposition}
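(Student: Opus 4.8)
The plan is to produce the two horizontal isomorphisms and check that they are compatible with the vertical inclusions. The top row is the one supplied by Proposition~\ref{prop_geometric_root_lattice}(4): restriction of line bundles $\Pic(S_{k^s}) \to \Pic(C_{k^s})$ sends $K_S$ into the span of $K_C$ (indeed $C \in |-2K_S|$, so by adjunction the canonical class of $C$ is the restriction of $-K_S$, up to the standard factor coming from $K_C = (K_S + C)|_C$), hence descends to a $\Gamma_k$-equivariant map $\Lambda^\vee \cong \Pic(S_{k^s})/\bbZ K_S \to \Pic(C_{k^s})/\bbZ K_C$. One then checks this map is injective with image the $2$-torsion: the kernel of restriction on $\Pic(S_{k^s})$ is trivial on the sublattice $\Lambda = K_S^\perp$ because a class restricting to $0$ on $C$ is a multiple of $K_S$ (here one uses that $S_{k^s}$ is a blow-up of $\bbP^2$ at $7$ general points and that the restriction map is understood explicitly in terms of this model), and every class in $\Lambda^\vee$ restricts to a $2$-torsion element since $\Lambda^\vee/\Lambda$ is killed by... no: rather, the point is that $\Lambda^\vee/2\Lambda^\vee$ surjects onto $(\Pic(C_{k^s})/\bbZ K_C)[2]$ and both are finite of the same order $2^7$ by the classical theory of bitangents (Proposition~\ref{prop_geometric_root_lattice} identifies $\Lambda/2\Lambda$ with $\Pic^0(C)[2]$ only after passing through $N_C$, so I need to be careful here and argue dimension count). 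So the top isomorphism is: reduce the already-classical statement (see \cite[IX, \S1]{Dol88}) that $\Lambda/2\Lambda \cong \Pic^0(C)[2]$, together with the observation that $\Lambda^\vee/2\Lambda^\vee$ is the ``$\Pic/K$'' analogue, to the commutative square.

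The bottom row is the classical isomorphism $N_C \cong \Pic^0(C)[2] = J[2]$, where $N_C$ is the image of $\Lambda/2\Lambda \to \Lambda^\vee/2\Lambda^\vee$. The content here is (a) $N_C$ carries the non-degenerate symplectic form induced from the intersection pairing, of rank $6$, matching $\dim_{\bbF_2} J[2] = 6$; (b) the restriction map $\Pic(S_{k^s}) \to \Pic(C_{k^s})$ carries $\Lambda = K_S^\perp$ into $\Pic^0(C_{k^s})$, because a class orthogonal to $K_S$ has degree zero on $C \in |-2K_S|$, and carries $2\Lambda$ into $2\Pic^0(C_{k^s})$, so descends to $\Lambda/2\Lambda \to J[2]$, which factors through $N_C$; (c) this induced map $N_C \to J[2]$ is an isomorphism by comparing orders (both $2^6$) and checking injectivity, again via the explicit blow-up model and the known combinatorics of how the $56$ exceptional-type classes on a del Pezzo of degree $2$ restrict to the $28$ bitangents of $C$, each bitangent giving an odd theta characteristic. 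All of this is ``classical'' — I would cite \cite[Ch.~8]{Dol12} and \cite{Gri94} for the geometric input, and Proposition~\ref{prop_geometric_root_lattice} for the lattice-theoretic packaging.

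Commutativity of the square is then essentially formal: every arrow in sight is induced by the single restriction-of-line-bundles map $\Pic(S_{k^s}) \to \Pic(C_{k^s})$, modulo the subgroups $\bbZ K_S$, $\langle e,f\rangle$, $2\Lambda$, $2\Lambda^\vee$ on the source side and $\bbZ K_C$, $2\Pic^0$ on the target side; so one only has to observe that these subgroups correspond correctly under restriction. Concretely: $\Lambda \hookrightarrow \Lambda^\vee$ induces $N_C \hookrightarrow \Lambda^\vee/2\Lambda^\vee$ (by definition of $N_C$), restriction sends $\Lambda$ into $\Pic^0$ and all of $\Pic(S_{k^s})/\bbZ K_S$ into $\Pic(C_{k^s})/\bbZ K_C$ with $2$-torsion image, and the inclusion $\Pic^0(C)[2] \hookrightarrow (\Pic(C)/\bbZ K_C)[2]$ is the obvious one; the square commutes because restriction is a single homomorphism.

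\textbf{Main obstacle.} The delicate point is item (c) above — identifying $N_C$ (not $\Lambda/2\Lambda$, which is too big: it has order $2^7$) with $J[2]$, and seeing that the restriction map realizes precisely this quotient. Equivalently, one must show the kernel of $\Lambda/2\Lambda \to J[2]$ is exactly the radical of the mod-$2$ intersection form on $\Lambda/2\Lambda$, a $1$-dimensional space. The cleanest route is to invoke the classical identification directly (the reference \cite[IX, \S1]{Dol88} flagged in the introduction handles the $E_7$/del Pezzo-of-degree-$2$ case, with $J[2]$ appearing as bitangents $=$ odd theta characteristics), and then note that restriction of line bundles must induce *that* isomorphism because it is $\Gamma_k$-equivariant and there is essentially no room for it to be anything else once one knows the domain and codomain; but making ``essentially no room'' precise may require the explicit $7$-points-in-$\bbP^2$ model and a short check that the $56$ lines on $S$ restrict two-to-one onto the $28$ bitangents, which then pins down the map on a generating set. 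I would present this as: cite the classical isomorphism, and verify directly that it is induced by restriction by evaluating both on the classes of exceptional curves.
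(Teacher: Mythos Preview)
Your outline is on the right track---both you and the paper build everything from the single restriction map $\Pic(S_{k^s}) \to \Pic(C_{k^s})$, and commutativity is indeed formal once the horizontal maps are pinned down. But there is one genuine gap and one inefficiency worth flagging.

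\textbf{The gap: why the image is $2$-torsion.} You visibly struggle with this (``$\Lambda^\vee/\Lambda$ is killed by\dots\ no: rather\dots''), and your fallback is to cite the classical result. The paper gives a one-line argument you are missing: let $\iota : S \to S$ be the covering involution. For any divisor class $D$ on $S$, the class $D + \iota^\ast D$ is pulled back from $\bbP^2$, hence a multiple of $K_S$; restricting to $C$ (where $\iota$ acts trivially) gives $2D|_C \sim (D + \iota^\ast D)|_C \in \bbZ K_C$. This is the whole content of the map $\Lambda^\vee \to (\Pic(C)/\bbZ K_C)[2]$ being well-defined, and without it your top row is not yet a map to the $2$-torsion.

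\textbf{The inefficiency: order of argument.} You frame the ``main obstacle'' as identifying the kernel of $\Lambda/2\Lambda \to J[2]$ with the radical of the mod-$2$ form. The paper sidesteps this entirely by proving the \emph{top} isomorphism first: both sides have order $2^7$, and surjectivity is checked by observing that each bitangent $\ell$ (equivalently, each odd theta characteristic $\kappa$) is hit by the class of a component $e$ of $\pi^{-1}(\ell)$, and odd theta characteristics generate $(\Pic(C)/\bbZ K_C)[2]$. Once the top arrow is an isomorphism, the bottom arrow is automatically injective (it is the restriction of an isomorphism to a subgroup), and both sides have order $2^6$, so it is an isomorphism too. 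Your kernel computation becomes unnecessary.

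So: keep your framework, insert the involution argument for $2$-torsion, and reverse the order---prove the top row first via bitangents and cardinality, then deduce the bottom row by restriction.
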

\begin{proof}
We first define the maps. The top map is induced by the composite
\[ \Lambda^\vee \cong \Pic(S)/\bbZ K_S \to  \Pic(C)/\bbZ K_C, \]
which takes image in $(\Pic(C)/\bbZ K_C)[2] \subset \Pic(C)/\bbZ K_C$. It is well-defined since $K_S|_{C} = -K_C$, and if $D$ is any divisor class on $S$ then $2D|_C \sim (D + \iota^\ast D)|_C$ is a multiple of $K_C$ (where $\iota : S \to S$ is the involution which swaps sheets). The left and right maps are the natural inclusions. To see that the bottom map is derived from the top one, it is enough to note that if $D$ is a divisor class in $\Lambda$, then $\deg D|_C = \langle K_S, D \rangle = 0$, so $D|_C \in \Pic^0(C)[2]$.

We now show that the top and bottom maps are isomorphisms. We can assume that $k = k^s$. The groups in the top row have the same cardinality $2^7$. If $\ell$ is a bitangent line of $C$ corresponding to an odd theta characteristic $\kappa \in (\Pic(C)/\bbZ K_C)[2]$, and $\pi^{-1}(\ell) = e \cup f$, then the image of $e \in \Lambda^\vee$ in $(\Pic(C)/\bbZ K_C)[2]$ equals $\kappa$. The group $(\Pic(C)/\bbZ K_C)[2]$ is generated by the odd theta characteristics. This shows that the top arrow is surjective, hence an isomorphism. The groups in the bottom row have the same cardinality $2^6$, and the bottom arrow is injective. It is therefore also an isomorphism, and this completes the proof.
\end{proof}
As pointed out in the introduction, Proposition \ref{prop_plane_quartic_commutative_diagram} is essentially classical.
\begin{proposition}\label{prop_symplectic_pairings_are_identified} \begin{enumerate} \item Under the isomorphism $N_C \cong \Pic^0(C)[2]$ of Proposition \ref{prop_plane_quartic_commutative_diagram}, the natural symplectic form on $N_C$ is identified with the Weil pairing on $\Pic^0(C)[2]$.
\item Let $\ell$ be a $k$-rational bitangent line of $C$, and let $\kappa$ denote the corresponding $k$-rational theta characteristic. Let $q_\ell : N_C \to \bbF_2$ denote the quadratic form corresponding to the isomorphism $\Lambda_\ell / 2 \Lambda_\ell \cong N_C$, and let $q_\kappa : \Pic^0(C)[2] \to \bbF_2$ be the quadratic form induced by $\kappa$. Then, under the isomorphism $N_C \cong \Pic^0(C)[2]$ of Proposition \ref{prop_plane_quartic_commutative_diagram}, $q_\ell$ and $q_\kappa$ are identified.
\end{enumerate}
\end{proposition}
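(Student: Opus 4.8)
The plan is to prove part~(1) directly, by a computation with bitangents, and then to obtain part~(2) from a rigidity argument built on the coincidence $W(E_7)\cong\{\pm1\}\times\Sp_6(\bbF_2)$. Since the isomorphism $N_C\cong\Pic^0(C)[2]$ of Proposition~\ref{prop_plane_quartic_commutative_diagram} is $\Gamma_k$-equivariant and every pairing and quadratic form in sight is defined over $k$, and since two morphisms of finite \'etale $k$-group schemes agree as soon as they agree on $k^s$-points, I may assume $k=k^s$ throughout; then $C$ has its $28$ bitangent lines, realizing the $28$ odd theta characteristics, and in particular the bitangent $\ell$ of part~(2) is available.

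For part~(1) I would evaluate both pairings on the spanning set of $\Pic^0(C)[2]$ consisting of the classes $v=\kappa'\otimes\kappa^{-1}$, where $\kappa$ is a fixed odd theta characteristic and $\kappa'$ runs over the remaining ones; these span because the odd theta characteristics generate $(\Pic(C)/\bbZ K_C)[2]$. For two such classes $v=\kappa'\otimes\kappa^{-1}$ and $w=\kappa''\otimes\kappa^{-1}$, the polarization identity together with Lemma~\ref{lem_arf_invariant}(3) (noting $q_\kappa(v)=q_\kappa(w)=0$, since $\kappa,\kappa',\kappa''$ are all odd, and the identification of theta characteristics with quadratic refinements in \S\ref{sec_theta_characteristics}) gives
\[ \langle v,w\rangle_{\mathrm{Weil}} \;=\; q_\kappa(v+w) \;=\; a\!\left(q_{\kappa'\otimes\kappa''\otimes\kappa^{-1}}\right)+1 \;=\; h^0\!\left(\kappa'\otimes\kappa''\otimes\kappa^{-1}\right)+1 \pmod 2. \]
On the surface side, write $\pi^{-1}(\ell')=e'\cup f'$, and similarly for $\ell''$; using that $e|_C,e'|_C,e''|_C$ are the theta characteristics $\kappa,\kappa',\kappa''$ (with $f|_C=e|_C$, etc.), the class $v$ is the image in $N_C$ of $e'-e\in\Lambda=K_S^\perp$, and $w$ of $e''-e$, so the corresponding pairing in $N_C$ is $(e'-e)\cdot(e''-e)\bmod 2$. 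Thus part~(1) amounts to the classical congruence
\[ h^0\!\left(\kappa'\otimes\kappa''\otimes\kappa^{-1}\right)+1 \;\equiv\; (e'-e)\cdot(e''-e) \pmod 2 \]
for all triples of bitangents, which I would verify on the realization of $S_{k^s}$ as $\bbP^2$ blown up at $7$ general points (Proposition~\ref{prop_geometric_root_lattice}), dividing into cases according to whether the three bitangents are syzygetic or azygetic. This congruence — the point at which the combinatorics of the $28$ bitangents is matched with the $E_7$-root-lattice combinatorics — is where I expect the main work to lie.

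For part~(2): by part~(1) the two pairings on $N_C\cong\Pic^0(C)[2]$ agree, so $q_\ell$ and $q_\kappa$ are quadratic refinements of one and the same non-degenerate symplectic form $\beta$, and it suffices to show $q_{\ell'}=q_{\kappa'}$ for every bitangent $\ell'$, where $q_{\ell'}$ is the refinement attached to the $E_6$-sublattice $\Lambda_{\ell'}=\langle e',f'\rangle^\perp\subset\Lambda$ and $q_{\kappa'}$ the one induced by $\kappa'=e'|_C$. The stabilizer of the exceptional pair $\{e',f'\}$ in $W(E_7)$ has order $|W(E_7)|/28=2\cdot 51840$, and (the central $\{\pm1\}$ acts trivially on $N_C$ and preserves the pair) its image $\Gamma'$ in $\Sp(N_C,\beta)$ has order $51840$. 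This image fixes $q_{\ell'}$ — it preserves $\Lambda_{\ell'}$ together with its quadratic form — and it fixes $q_{\kappa'}$ — each of its elements fixes or interchanges the two classes $e',f'$ restricting to $\kappa'$, while the parity of $h^0$ of the theta characteristics $\kappa'\otimes(-)$ is invariant under the monodromy action of $W(E_7)$ on $\Pic^0(C)[2]$. Since $q_{\kappa'}$ has Arf invariant $1$ (as $\kappa'$ is odd), $\O(q_{\kappa'})\cong\O_6^-(\bbF_2)$ has order exactly $51840$; as $\Gamma'\subseteq\O(q_{\kappa'})$ has the same order, $\Gamma'=\O(q_{\kappa'})$, which has no nonzero fixed vector in $N_C$ (it is transitive, by Witt's theorem, on the nonzero vectors of each $q$-value). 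Finally $q_{\ell'}-q_{\kappa'}$ is a linear functional $\beta(c,-)$ fixed by $\Gamma'$; since $\Gamma'$ preserves $\beta$ it then fixes $c$, whence $c=0$ and $q_{\ell'}=q_{\kappa'}$. Taking $\ell'=\ell$ gives part~(2). The one subtlety here, beyond the congruence of part~(1), is the assertion that the $W(E_7)$-action on $N_C$ coming from $\Pic(S_{k^s})$ corresponds, under Proposition~\ref{prop_plane_quartic_commutative_diagram}, to the geometric monodromy action on $\Pic^0(C)[2]$ (so that ``$\Gamma'$ fixes $\kappa'$'' is both meaningful and true); this can be checked by a family argument over the moduli of plane quartics, the local system $N_C$ being cut out from the Picard local system of the associated family of degree-$2$ del Pezzo surfaces, whose monodromy is $W(E_7)$.
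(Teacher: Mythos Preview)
Your argument is essentially correct, but the paper takes a much shorter route and in the opposite order. The paper observes that since $q_\ell$ and $q_\kappa$ are quadratic refinements of the two symplectic forms in question, part~(1) follows immediately from part~(2) by polarization; so it proves only part~(2). For part~(2) it uses a counting trick rather than your rigidity argument: both $q_\ell$ and $q_\kappa$ have Arf invariant $1$, hence each has exactly $28$ zeroes in a $6$-dimensional symplectic space, so it suffices to exhibit $28$ common zeroes. These are supplied by the $28$ differences $\kappa-\kappa'$ with $\kappa'$ odd; that $q_\kappa(\kappa-\kappa')=0$ is Lemma~\ref{lem_arf_invariant}(3), and that $q_\ell$ vanishes there reduces to the single calculation that a lift of $\kappa-\kappa'$ to $\Lambda_\ell$ has self-intersection divisible by $4$, which is a short computation in $\Pic(S_{k^s})$. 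Compared to this, your approach front-loads the work: your part~(1) needs the full syzygetic/azygetic case analysis you flag, and your part~(2) needs the identification of the $W(E_7)$-action on $N_C$ with geometric monodromy on $\Pic^0(C)[2]$ to justify that $\Gamma'$ fixes $q_{\kappa'}$. Both ingredients are true and classical, but neither is needed once you notice that (2) $\Rightarrow$ (1) and that equality of two Arf-$1$ refinements follows from sharing $28$ zeroes. Your method does buy something: it proves (1) independently of any choice of bitangent, and the rigidity argument would generalize to settings where one cannot count zeroes so cleanly.
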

\begin{proof}
Since $q_\ell$ and $q_\kappa$ are quadratic refinements of the symplectic forms, it suffices to prove the second part. These quadratic forms have Arf invariant $1$, and therefore have each exactly 28 zeroes. It therefore suffices to show that $q_\ell$ and $q_\kappa$ have at least 28 zeroes in common. To do this, we can assume that $k = k^s$. If $\kappa'$ is any odd theta characteristic of $C$, then $\kappa - \kappa' \in \Pic^0(C)[2]$ is a zero of $q_\kappa$, and there are exactly 28 such elements. (Use the formula $a(q+v) = a(q) + q(v)$ of Lemma \ref{lem_arf_invariant}.) We must therefore show that if $v \in \Lambda_\ell$ has image $\kappa - \kappa'$, then $\langle v, v \rangle$ is divisible by 4. This is an easy calculation in $\Pic(S_{k^s})$.
\end{proof}
We now fix a rational point $P \in C(k)$. We define elements of certain tori and their Lie algebras, following \cite[\S 1]{Loo93}. We break into 4 cases, according to the geometry of the point $P$.  Let $\ell$ denote the tangent line to $C$ at $P$ in $\bbP^2_k$, and $K = \pi^{-1}(\ell)$ its inverse image, an anti-canonical curve in $S$.
 \subsubsection*{Case $E_7$: $\ell$ not a flex}
In the most general case, the tangent line at $P$ to $C$ in its plane embedding meets $C$ at 3 distinct points and therefore has contact of order 2 at $P$. We define a point of the torus $T = \Hom(\Lambda, \bbG_m)$, up to inversion.
Indeed, in this case $K$ is an irreducible rational curve with a unique nodal singularity at $P$. There is a unique choice of $S$ for which the tangent directions of $K$ at $P$ are defined over $k$; we make this choice. Restriction of line bundles induces a homomorphism $\Pic(S) \to \Pic(K)$. An element of $\Pic(S)$ is orthogonal to $K_S$ (under the intersection pairing) if and only if its restriction to $K$ has degree 0, so we obtain an induced homomorphism $\Lambda \to \Pic^0(K)$. Choosing a group isomorphism $\Pic^0(K) \cong \bbG_m$, we now obtain a point $\kappa_C \in T(k)$, well-defined up to inversion.

\subsubsection*{Case $\fre_7$: $\ell$ a flex, not a hyperflex}
We now suppose that the tangent line to $C$ at $P$ has contact of order exactly 3, and fix in addition a non-zero tangent vector $t$ in the Zariski tangent space of $C$ at $P$. We define a point $\kappa_C$ of the Lie algebra $\frt$ of the torus $T = \Hom(\Lambda, \bbG_m)$, well-defined up to multiplication by $-1$. The curve $K$ is irreducible and rational with a unique cuspidal singularity, at $P$. Restriction induces a morphism $\Lambda \to \Pic^0(K)$. To write down $\kappa_C$, it therefore suffices to give a normalization of the isomorphism $\Pic^0(K) \cong \bbG_a$, at least up to sign.

To do this we find it convenient to introduce explicit co-ordinates. Using Riemann--Roch, it is easy to show that there are unique functions $x, y \in k(C)^\times$ satisfying the following conditions:
\begin{itemize}
\item $x \in H^0(C, \cO_C(2P + Q))$ and $y \in H^0(C, \cO_C(3P - Q))$.
\item Let $z \in \cO_{C, P}$ be a co-ordinate such that $dz(t) = 1$. Then $x = z^{-2} + \dots$ and $y = z^{-3} + \dots$ locally at $P$.
\item $x$ and $y$ satisfy the equation
\[ y^3 = x^3y + p_{10}x^2 + x(p_2y^2 + p_8y + p_{14}) + p_6y^2 + p_{12}y + p_{18} \]
for some $p_2, \dots, p_{18} \in k$.
\end{itemize}
Then we can choose homogeneous co-ordinates $X, Y, Z$ on $\bbP^2_k$ such that $C$ is given by the equation
\[ Y^3 Z = X^3 Y + p_{10}X^2Z^2 + X(p_2Y^2Z + p_8YZ^2 + p_{14}Z^3) + p_6Y^2Z^2 + p_{12}YZ^3 + p_{18}Z^4, \]
and this equation is uniquely determined by the triple $(C, P, t)$. We use it to define the surface $S$. Then a chart in $S$ is the affine surface
\[ w^2 = z_0 - ( x_0^3 + p_{10} x_0^2 z_0^2 + \dots + p_{18} z_0^4), \]
where $x_0 = X/Y$, $z_0 = Z/Y$, and the curve $K$ is given locally by the equation $z_0 = 0$. Let $f : \widetilde{K} \to K$ be the normalization. A co-ordinate in $\widetilde{K}$ at the point above $P$ is given by $w/x_0$. We use the isomorphism $\bbG_a \cong \Pic^0(K)$, $t \mapsto \delta(1 + tw/x_0)$, where $\delta$ is the connecting homomorphism of the exact sequence of sheaves on $K$:
\[ \xymatrix@1{ 0 \ar[r] & \cO_K^\times \ar[r] & f_\ast \cO_{\widetilde{K}}^\times \ar[r] & f_\ast \cO_{\widetilde{K}}^\times/\cO_K^\times \ar[r] & 0.} \]

\subsubsection*{Case $E_6$: $\ell$ a bitangent, not a hyperflex}
We now suppose that $\ell$ meets $C$ at two distinct points, say $P$, $Q$, and that it has contact of order 2 at each point. Then the root subsystem $\Lambda_\ell \subset \Lambda$ is defined, and we will define a point of the torus $T = \Hom(\Lambda_\ell, \bbG_m)$. The curve $K_{k^s} = e_{k^s} \cup f_{k^s}$ is a union of two smooth conics, which meet transversely at two distinct points. We choose $S$ so that these conics are defined over $k$. We thus obtain a homomorphism $\Lambda_\ell \to \Pic^0(K)^-$, where $(?)^-$ denotes the $-1$-eigenspace of the involution induced by switching sheets. The group $\Pic^0(K)^-$ is canonically isomorphic to $\bbG_m$, the isomorphism being specified as in \cite[\S 1.12]{Loo93}: if $s \in \bbG_m$ tends to 0, then $e$ is contracted to $P$ and $f$ is contracted to $Q$. We define $\kappa_C \in T(k)$ to be the point obtained via this isomorphism. If the roles of $e$ and $f$ are reversed, then $\kappa_C$ is replaced by $\kappa_C^{-1}$.

\subsubsection*{Case $\fre_6$: $\ell$ a hyperflex}
We now suppose that $\ell$ has contact of order $4$ with $C$ at $P$, and fix in addition a non-zero tangent vector $t$ in the Zariski tangent space of $C$ at $P$. Then the root system $\Lambda_\ell \subset \Lambda$ is defined, and we will define a point $\kappa_C$ of the Lie algebra $\frt$ of the torus $T = \Hom(\Lambda_\ell, \bbG_m)$. Restriction once more induces a map $\Lambda_\ell \to \Pic^0(K)^-$, and we obtain a point $\kappa_C \in \frt$ by specifying an isomorphism $\Pic^0(K)^- \cong \bbG_a$. To do this, we again introduce explicit co-ordinates. There are unique functions $x, y \in k(C)^\times$ satisfying the following conditions:
\begin{itemize}
\item $x \in H^0(C, \cO_C(3P))$ and $y \in H^0(C, \cO_C(4P))$.
\item Let $z \in \cO_{C, P}$ be a co-ordinate such that $dz(t) = 1$. Then $x = z^{-3} + \dots$ and $y = z^{-4} + \dots$ locally at $P$.
\item $x$ and $y$ satisfy the equation
\[ y^3 = x^4 + y(p_2x^2 + p_5x + p_8) + p_6x^2 + p_9x + p_{12} \]
for some $p_2, \dots, p_{12} \in k$.
\end{itemize}
Then we can choose homogeneous co-ordinates $X, Y, Z$ on $\bbP^2_k$ such that $C$ is given by the equation
\[ Y^3Z = X^4 + Y(p_2X^2Z + p_5XZ^2 + p_8Z^3) + p_6X^2Z^2 + p_9XZ^3 + p_{12}Z^4, \]
and this equation is uniquely determined by the triple $(C, P, t)$. We use it to define the surface $S$. A chart in $S$ is the affine surface
\[ w^2 = z_0 - (x_0^4 + \dots + p_{12} z_0^4), \]
where $x_0 = X/Y$ and $z_0 = Z/Y$. The curve $K = e \cup f$ is a union of 2 smooth conics which are tangent at the point $P$, and is given in the above chart by the equation $z_0 = 0$. A co-ordinate at $P$ in both $e$ and $f$ is given by $x_0$. We use the isomorphism $\bbG_a \cong \Pic^0(K)^-$, $t \mapsto \delta(1 + t x, 1)$, where $\delta$ is the connecting homomorphism in the exact sequence of sheaves on $K$:
\[ \xymatrix@1{ 0 \ar[r] & \cO_K^\times \ar[r] & \cO_e^\times \oplus \cO_f^\times \ar[r] & (\cO_e^\times \oplus \cO_f^\times)/\cO_K^\times \ar[r] & 0.} \]
If the roles of $e$ and $f$ are reversed, then $\kappa_C$ is replaced by $-\kappa_C$.

\bigskip

In each case, we write $\cS : k\text{-alg} \to \text{Sets}$ for the functor of data $(C, P, \dots)$ considered above. This means:
\begin{itemize}
\item In case $E_7$, $\cS$ is the functor of pairs $(C, P)$, where $C$ is a non-hyperelliptic curve of genus 3 and $P$ is a point of $C$ which is not a flex or a bitangent in the canonical embedding. More precisely, for each $A \in k\text{-alg}$, $\cS(A)$ is the set of isomorphism classes of pairs $(\pi, P)$ consisting of a proper flat morphism $\pi : C \to \Spec A$ and a section $P : \Spec A \to C$ of $\pi$ such that for each geometric point $\overline{s}$ of $\Spec A$, the pair $(C_{\overline{s}}, P_{\overline{s}})$ is of this type.
\item In case $\fre_7$, $\cS$ is the functor of triples $(C, P, t)$, where $C$ is a non-hyperelliptic curve of genus 3, $P$ is a point of $C$ which is a flex (but not a hyperflex) in the canonical embedding, and $t$ is a non-zero element of the Zariski tangent space of $C$ at $P$.
\item In case $E_6$, $\cS$ is the functor of pairs $(C, P)$, where $C$ is a non-hyperelliptic curve of genus 3 and $P$ is a point such that $T_PC$ is a bitangent in the canonical embedding of $C$.
\item In case $\fre_6$, $\cS$ is the functor of triples $(C, P, t)$, where $C$ is a non-hyperelliptic curve of genus 3, $P$ is a point which is a hyperflex in the canonical embedding, and $t$ is a non-zero element of the Zariski tangent space of $C$ at $P$.
\end{itemize}
We can now state the following reformulation of some results of Looijenga:
\begin{theorem}\label{thm_application_of_looijenga}
Suppose that $k = k^s$.
\begin{itemize}
\item In case $E_7$, let $\Lambda_0$ be a root lattice of the corresponding type, and let $T_0 = \Hom(\Lambda_0, \bbG_m)$. Then the Weyl group $W = W(\Lambda_0)$ acts on $T_0$, and the assignment $(C, P) \to \kappa_C$ induces a bijection $\cS(k) \to (T_0^\text{rss} \! \sslash \!  W)(k)$.
\item In case $E_6$, let $\Lambda_0$ be a root lattice of the corresponding type, and let $T_0 = \Hom(\Lambda_0, \bbG_m)$. Fix a non-trivial class $e_0 \in \Lambda_0^\vee / \Lambda_0$. Then the Weyl group $W = W(\Lambda_0)$ acts on $T_0$, and the assignment $(C, P) \to \kappa_C$ induces a bijection $\cS(k) \to  (T_0^\text{rss} \! \sslash \!  W)(k)$.
\item In case $\fre_7$, let $\Lambda_0$ be a root lattice of the corresponding type, and let $\frt_0 = \Hom(\Lambda_0, \bbG_a)$. Then the Weyl group $W = W(\Lambda_0)$ acts on $\frt_0$, and the assignment $(C, P, t) \to \kappa_C$ induces a bijection $\cS(k) \to (\frt_0^\text{rss} \! \sslash \!  W)(k)$.
\item In case $\fre_6$, let $\Lambda_0$ be a root lattice of the corresponding type, and let $\frt_0 = \Hom(\Lambda_0, \bbG_a)$. Fix a non-trivial class $e_0 \in \Lambda_0^\vee / \Lambda_0$. Then the Weyl group $W = W(\Lambda_0)$ acts on $\frt_0$, and the assignment $(C, P, t) \to \kappa_C$ induces a bijection $\cS(k) \to (\frt_0^\text{rss} \! \sslash \!  W)(k)$.
\end{itemize}
\end{theorem}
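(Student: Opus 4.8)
The plan is to recognise the assignment $(C, P, \dots) \mapsto \kappa_C$ as the period map for a moduli problem of del Pezzo surfaces equipped with an anticanonical curve of prescribed degenerate type, and then to quote the corresponding theorems of Looijenga \cite[\S 1]{Loo93}. The four cases run in parallel; I would describe the $E_7$ and $\fre_7$ cases, since the $E_6$ and $\fre_6$ cases reduce to these after contracting one of the two $(-1)$-curves comprising $K$, so that the degree-$2$ surface $S$ and the lattice $\Lambda = K_S^\perp$ get replaced by a cubic surface $S'$ and $\Lambda_\ell = K_{S'}^\perp$ (Proposition \ref{prop_geometric_root_lattice}).

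The first step is to set up an equivalence between $\cS(k)$ and a moduli of rigidified anticanonical pairs. Given $(C, P)$ (resp.\ $(C, P, t)$), the double cover $\pi : S \to \bbP^2_k$ branched along $C$ is a del Pezzo surface of degree $2$, and $K = \pi^{-1}(T_P C)$ is an anticanonical curve; a local computation at the point of $S$ above $P$ shows that the contact order of $T_P C$ with $C$ at $P$ translates exactly into the singularity type of $K$ recorded in the case-by-case discussion preceding the theorem. Conversely, from a del Pezzo surface of degree $2$ with an anticanonical curve of the prescribed type one recovers $C$ as the branch quartic of the anticanonical map and $P$ as the image of the singular point. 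Next I would match the residual rigidifications: on the surface side one has, in the nodal case an ordering of the two branches of $K$ at its node, and in the cuspidal case a nonzero tangent vector at the point of the normalisation $\widetilde{K}$ above the cusp. In case $\fre_7$ the latter is precisely the vector $t$ (via $dz(t) = 1$); in case $E_7$ the two branches of $K$ are interchanged by the Geiser involution, which acts as $-1$ on $\Lambda$ (see \cite[Ch.\ 8]{Dol12}), so the branch ordering is not canonical but the resulting ambiguity in $\kappa_C$ is exactly inversion, that is, the action of $-1 \in W(E_7)$. Finally a marking (an isometry of $\Lambda$, resp.\ $\Lambda_\ell$, with the reference lattice $\Lambda_0$) is a priori defined up to $\Aut(\Lambda_0)$; in the $E_7$ and $\fre_7$ cases $\Aut(\Lambda_0) = W(\Lambda_0)$, while in the $E_6$ and $\fre_6$ cases the marked point $P$ singles out, through the contracted $(-1)$-curve, a canonical class in $\Lambda_\ell^\vee/\Lambda_\ell$, and demanding that the marking carry it to $e_0$ cuts $\Aut(\Lambda_0)$ down to $W(\Lambda_0) = W(E_6)$ — here using that $-1 \notin W(E_6)$, so that the outer automorphism is detected on $\Lambda_0^\vee/\Lambda_0$.

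The second step is to identify the invariants and the loci. By \cite[\S 1]{Loo93} (see \S 1.12 there for the normalisations), restriction of line bundles gives $\Lambda \to \Pic^0(K)$ (resp.\ $\Lambda_\ell \to \Pic^0(K)^-$ in the reducible cases), and combined with the canonical identification of $\Pic^0(K)$ with $\bbG_m$ fixed by the branch ordering in the nodal case (resp.\ with $\bbG_a$ fixed by the tangent vector at the cusp) this is literally the point $\kappa_C$ constructed before the theorem — the construction there using the explicit coordinates $x, y$ and the connecting homomorphism $\delta$ being exactly Looijenga's normalisation. Looijenga's theorem then says that, after passing to the marking modulo $W$, this construction gives a bijection from the moduli of rigidified anticanonical pairs to $T_0^{\text{rss}} \sslash W$ (resp.\ $\frt_0^{\text{rss}} \sslash W$). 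It remains to match the loci: a $W$-orbit is regular semisimple if and only if its stabiliser in $W$ is trivial, which under the dictionary means that $S$ is a smooth del Pezzo surface and $K$ has \emph{exactly} the prescribed singularity — equivalently that $C$ is smooth and $T_P C$ has exactly the prescribed contact with $C$ at $P$. This is precisely the condition defining $\cS(k)$, so composing the equivalence of the first step with Looijenga's theorem yields the claimed bijections.

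The main obstacle I anticipate is not Looijenga's theorem itself but the faithful comparison of conventions in the first step: one must check that the rigidification data on the surface side match $P$ and $t$ exactly, with neither too much nor too little residual ambiguity. The sensitive point is the $E_6$ and $\fre_6$ cases, where $-1 \notin W(E_6)$ and the $e \leftrightarrow f$ symmetry cannot be absorbed into the Weyl group; there one must verify that the marked point $P$ genuinely selects the distinguished component of $K = e \cup f$ (equivalently the class $e_0$), so that $\kappa_C$ is well-defined modulo $W(E_6)$ and the resulting map is a bijection rather than two-to-one. A secondary point is the surjectivity of the period map onto the regular-semisimple locus, which is where Looijenga's Torelli-type input is essential: every regular semisimple orbit must come from a del Pezzo surface whose branch quartic is actually smooth, not merely from a degeneration of one.
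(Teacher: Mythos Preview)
Your approach is essentially the paper's: both recognise the theorem as a reformulation of Looijenga's results \cite[Propositions 1.8, 1.11, 1.13, 1.15]{Loo93}, and both spend their effort on the bookkeeping showing that the assignment $(C,P,\dots)\mapsto\kappa_C$ is well-defined modulo $W$ once a reference lattice $\Lambda_0$ is fixed.

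The one point of divergence is your handling of the $e\leftrightarrow f$ ambiguity in the $E_6$ and $\fre_6$ cases. You hope that the marked point $P$ directly singles out one component of $K=e\cup f$, but this cannot work: the deck involution of $\pi:S\to\bbP^2$ swaps $e$ and $f$ while fixing $P$ (which lies in the ramification locus), so $P$ alone does not distinguish them. The paper resolves this instead by a cancellation argument. A choice of component $e$ determines both the point $\kappa(C,P,e)\in T$ \emph{and} a non-trivial class in $\Lambda_\ell^\vee/\Lambda_\ell$ (namely the one containing the $27$ lines meeting $e$ but not $f$); one then demands that the marking $\lambda_e:\Lambda_\ell\to\Lambda_0$ carry this class to $e_0$, which pins $\lambda_e$ down modulo $W$. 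Swapping $e$ and $f$ replaces $\kappa(C,P,e)$ by its inverse \emph{and} replaces $\lambda_e$ by a marking differing by the outer automorphism $-1\in\Aut(\Lambda_0)\smallsetminus W$, and these two effects cancel in $(T_0\sslash W)(k)$. You correctly flagged this as the sensitive point; the resolution is cancellation rather than direct selection.
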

The subscript `rss' indicates the open subset of regular semisimple elements, i.e.\ the complement of all root hyperplanes.
\begin{proof}
We first explain what happens in the case of type $E_7$. For any field $k$ (not necessarily separably closed), and any pair $(C, P) \in \cS(k)$, we have constructed a point $\kappa_C$ of the torus $T = \Hom(\Lambda, \bbG_m)$, where $\Lambda$ is the root lattice with $\bbZ[\Gamma_k]$-action constructed above using the curve $C$.

When $k = k^s$, this action is trivial, and we can choose an isomorphism $\Lambda \cong \Lambda_0$ of root lattices, which is well-defined up to the action of the group $\Aut(\Lambda_0)$. The Dynkin diagram of type $E_7$ has no extra symmetries, so in fact $\Aut(\Lambda_0) = W$ (see \cite[Ch. VI, No. 1.5, Proposition 16]{Bou02}). We thus obtain an isomorphism $T \cong T_0$, well-defined up to the action of $W$, and a point $\kappa_C \in (T_0 \! \sslash \!  W )(k) = T_0(k) / W$. Note that $\kappa_C$ is well-defined only up to inversion, but $W$ contains the element $-1$. The result \cite[Proposition 1.8]{Loo93} now states that the point $\kappa_C$ is regular semisimple, and that the map $\cS(k) \to (T_0^\text{rss} \! \sslash \!  W)(k)$ is a bijection. (In fact, the result is stated when $k = \bbC$, but the proof is algebro-geometric in nature and goes through without change when $k$ is any separably closed field of characteristic 0.) Indeed, the construction given there is exactly the one we have explicated above.

We now explain what happens in the case of type $E_6$. Our construction gives a point $\kappa_C = \kappa(C, P, e)$ of the torus $T = \Hom(\Lambda_\ell, \bbG_m)$, where $e$ is a choice of irreducible component of the strict transform of the bitangent line $\ell$ at $P$ inside $S$; we have $\kappa(C, P, f) = \kappa(C, P, e)^{-1}$. The automorphism group $\Aut(\Lambda_0)$ is now strictly larger than $W$, because the Dynkin diagram of type $E_6$ has extra symmetries, the quotient $\Aut(\Lambda_0) / W$ being generated by the automorphism $-1$. In fact, these ambiguities cancel out.

Indeed, the quotient $\Lambda_\ell^\vee / \Lambda_\ell$ is cyclic of order 3, and the quotient $\Aut(\Lambda_0) / W$ acts faithfully on it. We can mark the non-trivial elements of $\Lambda_\ell^\vee / \Lambda_\ell$ by $e$ and $f$ as follows: the class corresponding to $e$ is the one containing the classes of the 27 lines on $S$ which intersect $e$ (but not $f$), and the class corresponding to $f$ is the one containing the classes of the 27 lines which intersect $f$ (but not $e$). Let $\lambda_e : \Lambda_\ell \to \Lambda_0$ be an isomorphism which sends the class in $\Lambda_\ell^\vee / \Lambda_\ell$ corresponding to $e$ to $e_0$. Then $\lambda_e$ is determined up to the action of $W(\Lambda_0)$. The point $\lambda_e \kappa(C, P, e) \in (T_0 \! \sslash \!  W)(k)$ is therefore well-defined, and we have $\lambda_f \kappa(C, P, f) = (\lambda_e \kappa(C, P, e)^{-1})^{-1} = \lambda_e \kappa(C, P, e) \text{ mod } W_0$. This gives a map $\cS(k) \to (T_0 \! \sslash \!  W)(k)$ which is independent of any choices, and which is shown to be a bijection into $(T_0^\text{rss}\! \sslash \!  W)(k)$ by \cite[Proposition 1.13]{Loo93}.

The Lie algebra cases are very similar, making reference to \cite[Proposition 1.11]{Loo93} and \cite[Proposition 1.15]{Loo93}.
\end{proof}
\subsection{Construction of orbits}\label{sec_construction_of_orbits}
We now come to the most important part of this paper. In each of the cases $E_7$, $\fre_7$, $E_6$ and $\fre_6$ described above, we give a semisimple group $G$ over $k$, together with a $G$-variety $X$, and write down orbits in $G(k) \backslash X(k)$ corresponding to elements of the groups $J(k) / 2 J(k)$. We must first fix `reference data'. This means:
\begin{itemize}
\item In cases $E_7$ and $\fre_7$, we fix a choice of pair $(H, \theta)$, where $H$ is a split adjoint simple group over $k$ of type $E_7$, and $\theta$ is an involution satisfying the conditions of Proposition \ref{prop_stable_involution}. We define $G = (H^\theta)^\circ$, and fix an inner class of isomorphisms $\frg \cong \frs\frl_8$; equivalently, we distinguish one of the two 8-dimensional representations of $\frg$ as the `standard representation'. The group $H$ has no outer automorphisms, but the group $H^\theta$ has two connected components, and the non-identity component acts on the identity component $G$ by outer automorphisms, exchanging the two choices of standard representation. Indeed, the component group can be calculated using \cite[Proposition 2.1]{Ree10} and the Kac co-ordinates of the inner automorphism $\theta$, which appear in the tables in \cite{Gro12}. The proof of \cite[Proposition 2.1]{Ree10} shows that we can find a representative of the non-trivial component which normalizes a maximal torus of $G$ but which does not act on this torus in the same way as any Weyl element of $G$; the induced automorphism of $G$ must therefore be outer.
\item In cases $E_6$ and $\fre_6$, we fix a choice of pair $(H, \theta)$, where $H$ is a split adjoint simple group over $k$ of type $E_6$, and $\theta$ is an involution satisfying the conditions of Proposition \ref{prop_stable_involution}. We define $G = (H^\theta)^\circ = H^\theta$, and distinguish one of the two 27-dimensional representations of $\frh$ as the `standard representation'. The connectedness of $H^\theta$ can be shown as above using the papers \cite{Ree10, Gro12}.
\end{itemize}
We recall that in \S \ref{sec_vinberg_and_richardson} we have defined two $G$-varieties $Y$ and $V$ in terms of the pair $(H, \theta)$. We use these to define the $G$-variety $X$ as follows:
\begin{itemize}
\item In cases $E_7$ and $E_6$, we define $X = Y \subset H$.
\item In cases $\fre_7$ and $\fre_6$, we define $X = V \subset \frh$.
\end{itemize}
In each case there is a $G$-invariant open subscheme $X^\text{s} \subset X$ of regular semisimple (equivalently, stable) orbits. We can now state our first main theorem:
\begin{theorem}\label{thm_base_orbits}
In each case, the assignment $(C, P, \dots) \mapsto \kappa_C$ determines a map
\begin{equation}\label{eqn_base_orbit} \cS(k) \to G(k) \backslash X^\text{s}(k).
\end{equation}
If $k = k^s$, then this map is bijective.
\end{theorem}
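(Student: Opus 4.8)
The plan is to reduce the separably closed case to Looijenga's theorem together with the Chevalley restriction theorems, by factoring the claimed map through the categorical quotient, and then to obtain the statement over a general $k$ from the functoriality of the construction of \S\ref{sec_group_with_involution}. Assume first $k=k^s$. I would feed into the construction of \S\ref{sec_group_with_involution} the pair $(\Lambda,\widetilde V)$ consisting of the root lattice of Proposition \ref{prop_geometric_root_lattice} and the extension of \S\ref{sec_heisenberg_and_descent} attached to the odd theta characteristic cut out by the bitangent (resp.\ hyperflex); the hypothesis $\widetilde v^2=(-1)^{\langle v,v\rangle/2}$ needed to do so is exactly the identification of $q_\ell$ with $q_\kappa$ in Proposition \ref{prop_symplectic_pairings_are_identified}. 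The output is a split adjoint group of the correct type carrying a split stable involution $\theta$ together with a torus $T=\Hom(\Lambda,\bbG_m)$ which $\theta$ inverts, so that $T\subset Y$ (resp.\ $\frt\subset V$). By the uniqueness in Proposition \ref{prop_stable_involution} this triple is $H(k^s)$-conjugate to the reference $(H,\theta)$; fixing such a conjugation and an isomorphism $\Lambda\cong\Lambda_0$ transports $\kappa_C$ to a point of $X(k^s)$, which lies in $X^{\text{s}}(k^s)$ precisely because the part of Theorem \ref{thm_application_of_looijenga} we use asserts that its class lands in $T_0^{\text{rss}}\sslash W$ (resp.\ $\frt_0^{\text{rss}}\sslash W$). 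The resulting point of $G(k^s)\backslash X^{\text{s}}(k^s)$ is independent of the choices: in type $E_7$ one has $\Aut(\Lambda_0)=W\ni-1$, so both the ambiguity in $\Lambda\cong\Lambda_0$ and the inversion ambiguity in $\kappa_C$ are absorbed; in type $E_6$ the residual symmetry $\Aut(\Lambda_0)/W\cong\bbZ/2$ cancels against the inversion ambiguity once the class $e_0\in\Lambda_0^\vee/\Lambda_0$ (equivalently, the standard $27$-dimensional representation) is fixed, exactly as in the proof of Theorem \ref{thm_application_of_looijenga}.

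For bijectivity over $k^s$ I would compose with the two identifications
\[ \cS(k^s)\ \xrightarrow{\ \sim\ }\ (T_0^{\text{rss}}\sslash W)(k^s)\ \xleftarrow{\ \sim\ }\ G(k^s)\backslash X^{\text{s}}(k^s), \]
the left one being the bijection of Theorem \ref{thm_application_of_looijenga} and the right one the inverse of the map induced by the Chevalley restriction isomorphism $X^{\text{s}}\sslash G\cong T_0^{\text{rss}}\sslash W$ (resp.\ $\frt_0^{\text{rss}}\sslash W$) of Theorem \ref{thm_vinberg} or \ref{thm_richardson}: the map $G(k^s)\backslash X^{\text{s}}(k^s)\to(X^{\text{s}}\sslash G)(k^s)$ is injective by part (2) of those theorems and surjective because $k^s$ is algebraically closed and $X^{\text{s}}\to X^{\text{s}}\sslash G$ is a surjection of affine schemes ($G$ reductive). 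It then remains to check that this composite is Looijenga's bijection, i.e.\ that the orbit attached to $(C,P,\dots)$ has invariant the class of $\kappa_C$; since Looijenga builds his point inside the very torus $\Hom(\Lambda,\bbG_m)$ occurring in \S\ref{sec_group_with_involution}, this is a comparison of two recipes rather than a new computation, and it yields the bijectivity.

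For a general $k$ only well-definedness is asserted. The whole chain $(C,P,\dots)\rightsquigarrow(\Lambda,\widetilde V,\kappa_C)\rightsquigarrow(\frh,\frt,\theta;\kappa_C)$ is functorial in $k$, hence $\Gamma_k$-equivariant, once the surface $S$ has been rigidified as prescribed in each of the four cases (rationality of the tangent directions of $K$, resp.\ of the two conics $e$ and $f$; and, in cases $\fre_7$ and $\fre_6$, the fact that the defining coordinates, and with them $S$, are uniquely pinned down by the triple $(C,P,t)$). Running the $k^s$-construction for $(C,P,\dots)$ produces a $\Gamma_k$-stable stable orbit in $X^{\text{s}}(k^s)$, and the point of the general case is that this orbit contains a $k$-point, namely the image of $\kappa_C$ once one has identified, over $k$, the output of the construction of \S\ref{sec_group_with_involution} with the reference data; independence of the remaining choices then follows from the separably closed discussion.

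The main obstacle, I expect, is precisely this last identification: showing that Lurie's construction applied to the Galois-equipped data $(\Lambda,\widetilde V)$ attached to $(C,P,\dots)$ reproduces, over $k$, the reference $(H,\theta,G,X)$ so that $\kappa_C$ defines a point of the reference $X^{\text{s}}(k)$. Around it sits the bookkeeping already visible over $k^s$: the $E_6$ Dynkin symmetry and the role of the class $e_0$ / the standard representation; in type $E_7$ the two connected components of $H^\theta$ and the outer automorphism of $G$ they induce, controlled via the component-group computation of \cite{Ree10} and the Kac coordinates tabulated in \cite{Gro12}; the $k^\times/(k^\times)^2$-torsor of surfaces $S$, which must not affect the output and does not because $S$ is rigidified; and the verification that the torus furnished by \S\ref{sec_group_with_involution} genuinely carries Looijenga's point $\kappa_C$. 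Each ingredient is available in the cited literature, but threading them together compatibly with the $\Gamma_k$-actions is the substance of the proof.
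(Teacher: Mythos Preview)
Your architecture is the paper's: build $(\frh_0,\frt_0,\theta_0)$ from $(\Lambda,\widetilde V)$ via \S\ref{sec_group_with_involution}, identify with the reference via Proposition~\ref{prop_stable_involution}, and reduce bijectivity over $k^s$ to Theorem~\ref{thm_application_of_looijenga} through the Chevalley isomorphism. But there is a genuine missing idea, and it is exactly the one needed to close what you call ``the main obstacle.''

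The paper does not feed only $(\Lambda,\widetilde V)$ into the construction of \S\ref{sec_group_with_involution}; it feeds the triple $(\Lambda,\widetilde V,W)$, where $W=H^0(J,\cL)$ is the $8$-dimensional space of sections of the $2\Theta$-bundle, on which $\widetilde V$ acts through its map to the Heisenberg group. The resulting Lie algebra representation $R:\frg_0\to\frgl(W)$ (Proposition~\ref{pro:lie_alg_rep}) is what forces the Lurie output to be split over $k$: in type $E_7$, $\frg_0$ is a form of $\frsl_8$ possessing an $8$-dimensional $k$-rational representation, hence split; in type $E_6$, $\frg_0$ is a form of $\frsp_8$ with an $8$-dimensional $k$-rational representation, hence split, and one then climbs to $H_0$ via the Tits classification and the rationality of $e,f$. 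Without $W$ you have no argument that $(H_0,\theta_0)$ is split over $k$, so Proposition~\ref{prop_stable_involution} cannot be invoked over $k$ and the identification with the reference data never gets off the ground. The same representation $W$ is also what resolves the $H^\theta(k)$-versus-$G(k)$ ambiguity in type $E_7$: the isomorphism $\varphi:H\to H_0$ is \emph{a priori} determined only up to $H^\theta(k)$, whose nontrivial component acts by the outer automorphism of $G\cong\PGL_8$; one rigidifies by demanding that $\varphi^\ast W$ be the chosen ``standard'' $8$-dimensional representation of $\frg$, not its dual. Your proposal mentions the component group and the standard representation as ``bookkeeping,'' but the mechanism that actually pins $\varphi$ down is the Heisenberg module $W$, which you never introduce.

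A smaller correction: your description of $\widetilde V$ as ``the extension of \S\ref{sec_heisenberg_and_descent} attached to the odd theta characteristic cut out by the bitangent (resp.\ hyperflex)'' is only right in cases $E_6$ and $\fre_6$. In cases $E_7$ and $\fre_7$ there is no bitangent at $P$; there $V=\Lambda/2\Lambda$ is $7$-dimensional with a $1$-dimensional radical, and the paper builds $\widetilde V$ by first pushing the Heisenberg extension out along the surjection $V\twoheadrightarrow J[2]$ and then cutting down by the character $\chi_q$ attached to the lattice quadratic form $q$ on $V$ (not to a theta characteristic). Proposition~\ref{prop_symplectic_pairings_are_identified} is used only in the $E_6$ cases.
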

We observe that the theorem has already been proved in the case $k = k^s$. Indeed, in this case, the set $G(k) \backslash X^s(k)$ can be understood, via the Chevalley isomorphisms of \S \ref{sec_vinberg_and_richardson}, in terms of Weyl group orbits in a maximal torus or Cartan subalgebra. Via this isomorphism, the theorem becomes Theorem \ref{thm_application_of_looijenga}. Our problem, then, is to lift this construction so that it works over any field. This also explains the need for the `reference data' described at the beginning of \S \ref{sec_construction_of_orbits}: it will provided the correct rigidification, in analogy with what happens in the proof of Theorem \ref{thm_application_of_looijenga}.

We remark that in cases $\fre_7$ and $\fre_6$, the functor $\cS$ is representable (as the triples $(C, P, t)$ have no automorphisms). This implies that for any field $k$, the map $\cS(k) \to G(k) \backslash V^\text{s}(k)$ is injective, and the composite $\cS(k) \to G(k) \backslash V^\text{s}(k) \to (V^\text{s} \! \! \sslash \! G)(k)$ is bijective.
\begin{proof}
Let us first treat the $E_7$ case. Let $(C, P) \in \cS(k)$, and let $V = \Lambda/2\Lambda$. The point $\kappa_C$ defined above lies in $T(k)$, where $T = \Hom(\Lambda, \bbG_m)$, and is well-defined up to inversion. We are going to define an extension $\widetilde{V}$ of $V$, with $\Gamma_k$-action, and then apply the constructions of \S \ref{sec_group_with_involution} to build a group around the torus $T$. Let $\widetilde{H}_\cL$ be the Heisenberg group defined in \S \ref{sec_heisenberg_and_descent}; it fits into an exact sequence
\[ \xymatrix@1{ 1\ar[r] & \bbG_m \ar[r] & \widetilde{H}_\cL \ar[r] & \Pic^0(C)[2] \ar[r] & 1.} \]
According to Proposition \ref{prop_plane_quartic_commutative_diagram}, there is a canonical injection $\Pic^0(C)[2] \hookrightarrow V^\vee$ of finite $k$-groups. Dualizing, we obtain a surjection $V \to \Pic^0(C)[2]$, and we push out the above extension by this surjection to obtain a central extension
\[ \xymatrix@1{ 1 \ar[r] & \bbG_m \ar[r] & \widetilde{E} \ar[r] & V \ar[r] & 1.} \]
The commutator pairing of $\widetilde{E}$ descends to the natural symplectic form on $V$ (since this is true for $\widetilde{H}_\cL$, by Lemma \ref{lem_weil_pairing_defined_by_commutator}, and the kernel of $V \to \Pic^0(C)[2]$ is exactly the radical of this symplectic form). Since $V$ is endowed with a $\Gamma_k$-invariant quadratic form $q : V \to \bbF_2$, we can define a character $\chi_q : \widetilde{E} \to \bbG_m$ by the formula $\widetilde{e} \mapsto (-1)^{q(e)} \widetilde{e}^2$. This makes sense since for any $\widetilde{e} \in \widetilde{E}$, we have $\widetilde{e}^2 \in \bbG_m$. Taking $\widetilde{V} = \ker \chi_q$ then gives the desired extension
\[ \xymatrix@1{ 1 \ar[r] & \{ \pm 1 \} \ar[r] & \widetilde{V} \ar[r] & V \ar[r] & 1.} \]
(This is a slight variant on the procedure leading to the extension (\ref{eqn_extension_from_theta_characteristic}).) Note that if $W = H^0(\Pic^0(C), \cL)$, then there is a natural homomorphism of $k$-groups $\widetilde{V} \to \GL(W)$. Indeed, the group $\widetilde{H}_\cL$ acts on $W$ by definition by pullback of sections; we can then pull back this action along the homomorphism $\widetilde{V} \to \widetilde{H}_\cL$. If $k = k^s$, then this is an 8-dimensional irreducible representation of the abstract group $\widetilde{V}(k^s)$, which sends $-1$ to $-\mathrm{id}_{W_{k^s}}$.

In \S \ref{sec_group_with_involution} we have associated to the triple $(\Lambda, \widetilde{V}, W)$ a simple adjoint group $H_0$ of type $E_7$, together with a stable involution $\theta$ and maximal torus $T \subset H_0$, and a representation of $\frg_0 = \frh_0^{\theta}$ on $W$. By definition, the torus $T$ is canonically isomorphic to $\Hom(\Lambda, \bbG_m)$, and $\theta$ acts on it by $t \mapsto t^{-1}$. The group $H_0$ is split; in fact, since $\frg_0$ is a form of $\frs\frl_8$ with an $8$-dimensional representation which is defined over $k$, $\frg_0$ is split. The Lie algebras $\frg_0$ and $\frh_0$ are semisimple Lie algebras of rank $7$, so this implies that $\frh_0$ must also be split.

By Proposition \ref{prop_stable_involution}, there is an isomorphism $\varphi : H \to H_0$ satisfying $\theta_0 \varphi = \varphi \theta$. This isomorphism is defined uniquely up to $H^\theta(k)$-conjugacy. The group $H^\theta$ is disconnected, with two connected components; the non-trivial component acts on the connected component $G = (H^\theta)^\circ$ by outer automorphisms. In order to pin down the isomorphism $\varphi$ up to $G(k)$-conjugacy, we observe that $\varphi^\ast(W)$ is an irreducible 8-dimensional representation of $\frg$, which is therefore isomorphic either to the fixed `standard representation' or its dual. After possibly modifying $\varphi$, we can therefore assume that $\varphi$ carries $W$ to the standard representation of $\frg$. The isomorphism $\varphi$ is then indeed determined uniquely up to $G(k)$-conjugacy.

It follows that the orbit $G(k) \cdot \varphi^{-1}(\kappa_C) \in G(k) \backslash Y(k)$ is well-defined. (Note in particular that $\kappa_C$ is defined only up to inversion, but that $\theta$ acts on $\kappa_C$ by inversion and lies in $G(k)$ (in fact in the centre of $G(k)$), so the orbit is independent of any choices.) To complete the proof in this case, we must show that $\varphi^{-1}(\kappa_C)$ is stable (equivalently, regular semisimple in $T$), and that the map we have defined is a bijection if $k = k^s$. This follows from the discussion preceding the proof of this theorem, and Theorem \ref{thm_application_of_looijenga}.

Let us now treat the $E_6$ case. The inverse image $\pi^{-1}(\ell) = e \cup f$ of the bitangent $\ell$ at $P$ in the surface $S$ determines the root lattice $\Lambda_\ell$, and we set $V = \Lambda_\ell/2\Lambda_\ell$. The natural symplectic pairing on $V$ is non-degenerate, and the quadratic form $q : V \to \bbF_2$ arising from the form on on $\Lambda_\ell$ agrees with the quadratic form on $V$ arising from the isomorphism $V \cong \Pic^0(C)[2]$ and the odd theta characteristic $\kappa$ corresponding to $\ell$, by Proposition \ref{prop_symplectic_pairings_are_identified}. We then have the Heisenberg group $\widetilde{H}_{\cL}$:
\[ \xymatrix@1{ 1 \ar[r] & \bbG_m \ar[r] & \widetilde{H}_{\cL} \ar[r] & \Pic^0(C)[2] \ar[r] & 1.} \]
Pushing out by the isomorphism $V \cong \Pic^0(C)[2]$, we obtain an extension (isomorphic to $\widetilde{H}_{\cL}$):
\[ \xymatrix@1{ 1 \ar[r] & \bbG_m \ar[r] & \widetilde{E} \ar[r] & V \ar[r] & 1.} \]
We define a character $\chi_q : \widetilde{E} \to \bbG_m$ by the formula $\widetilde{e} \mapsto (-1)^{q(e)}\widetilde{e}^2 $, and set $\widetilde{V} = \ker \chi_q$. Then $\widetilde{V}$ is an extension
\[ \xymatrix@1{ 1 \ar[r] & \{ \pm 1 \} \ar[r] & \widetilde{V} \ar[r] & V \ar[r] & 1.} \]
We define $W = H^0(\Pic^0(C), \cL)$; then $\widetilde{V}$ acts on $W$ through the homomorphism $\widetilde{V} \to \widetilde{H}_\cL$. Applying the constructions of \S \ref{sec_group_with_involution} to the triple $(\Lambda_\ell, \widetilde{V}, W)$, we obtain an adjoint group $H_0$ of type $E_6$ equipped with a stable involution $\theta_0$, together with an action of the Lie algebra $\frg_0 = \frh_0^{\theta_0}$ on $W$. Since $\frg_0$ is an inner form of $\frs\frp_8$ and has an 8-dimensional representation defined over $k$, it must be split. This implies that $H_0$ has split rank at least 4;  by the classification of forms of $E_6$ \cite[pp. 58--59]{Tit66}, we see that $H_0$ must be quasi-split, and split by a quadratic extension. This quadratic extension is the smallest extension splitting the Galois action on $\Lambda_\ell^\vee/\Lambda_\ell$. Since the geometric irreducible components $e$ and $f$ of $\pi^{-1}(\ell)$ are defined over $k$, this action is trivial, and we see that $H_0$ is also split.

Applying Proposition \ref{prop_stable_involution} once more, we see that there is an isomorphism $\varphi_e : H \to H_0$ such that $\varphi_e \theta = \theta_0 \varphi_e$. Such an isomorphism is determined up to $H^\theta(k) = G(k)$-conjugacy (as $H^\theta$ is connected in this case). Moreover, we can assume that under the isomorphism $\varphi_e$, the minuscule representation of $H_0$ with weights in $\Lambda_\ell^\vee / \Lambda_\ell$ corresponding to $e$ is identified with the `standard representation' of $H$.

The orbit $G(k) \cdot \varphi_e^{-1}(\kappa_C)$ is then well-defined: reversing the roles of $e$ and $f$ in our construction replaces $\kappa_C = \kappa(C, P, e)$ by $\kappa(C, P, f) = \kappa(C, P, e)^{-1}$, and $\theta_0$ is an outer automorphism, acting on $\Lambda_\ell^\vee/\Lambda_\ell \cong \bbZ/3\bbZ$ as multiplication by $-1$, so we can take $\varphi_f = \varphi_e \circ \theta_0$. Then we have
\[ G(k) \cdot \varphi_f^{-1}(\kappa(C, P, f)) = G(k) \cdot \varphi_f^{-1}( \theta_0(\kappa(C, P, e)) = G(k) \cdot \varphi_e^{-1}(\kappa(C, P, e)). \]
  This shows that we have constructed a well-defined map $\cS(k) \to G(k) \backslash X(k)$. The rest of the theorem in this case follows from the discussion preceding the proof of this theorem, and Theorem \ref{thm_application_of_looijenga}.

The arguments in the Lie algebra cases are very similar, with maximal tori replaced by Cartan subalgebras. We omit the details.
\end{proof}
Fix $x = (C, P, \dots) \in \cS(k)$. Let $\pi : X \to X \! \! \sslash \! G$ denote the natural quotient map, and let $X_x = \pi^{-1} \pi(x)$. Then we know that $X_x \subset X^\text{s}$ consists of a single $G$-orbit (see \S \ref{sec_vinberg_and_richardson}), but $X_x(k)$ may break up into several $G(k)$-orbits which all become conjugate over $k^s$. Let $J_x$ denote the Jacobian of $C$. We now state our second main theorem, which shows how to construct elements of the set $G(k) \backslash X_x(k)$ from the set $J_x(k)$:
\begin{theorem}\label{thm_groups_and_descent}
With notation as above, there is a canonical map
\begin{equation}\label{eqn_orbits_from_rational_points} J_x(k)/2J_x(k) \hookrightarrow G(k)\backslash X_x(k).
\end{equation}
It is functorial in $k$ in the obvious sense.
\end{theorem}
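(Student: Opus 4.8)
The plan is to construct the map \eqref{eqn_orbits_from_rational_points} by the same recipe used in the proof of Theorem \ref{thm_base_orbits}, but starting from the \emph{twisted} Heisenberg group attached to a rational point $A \in J_x(k)$. Concretely, given $A \in J_x(k)$, I would choose $B \in J_x(k^s)$ with $[2]B = A$, form the line bundle $t_B^\ast \cL$ (which, as recalled in \S\ref{sec_heisenberg_and_descent}, descends to $k$), and let $\widetilde{H}_{\cL_B}$ be the associated Heisenberg group. Pushing out along $V \to \Pic^0(C)[2] = J_x[2]$ (in case $E_7$) or along the isomorphism $V \cong \Pic^0(C)[2]$ (in case $E_6$), then taking $\ker \chi_q$ exactly as before, produces an extension $\widetilde{V}_B$ of $V$ of the type required at the beginning of \S\ref{sec_group_with_involution}, together with a representation on $W_B = H^0(\Pic^0(C), \cL_B)$. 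Feeding $(\Lambda, \widetilde{V}_B, W_B)$ (resp.\ $(\Lambda_\ell, \widetilde{V}_B, W_B)$) into Lurie's construction gives $(H_B, \theta_B, T_B)$ with $T_B = \Hom(\Lambda, \bbG_m)$ canonically, and the \emph{same} point $\kappa_C \in T_B(k)$ as before. Rigidifying $\varphi_B : H \to H_B$ using the standard representation as in the proof of Theorem \ref{thm_base_orbits} yields a well-defined orbit $G(k) \cdot \varphi_B^{-1}(\kappa_C) \in G(k) \backslash X_x(k)$; when $A = 0$ one may take $B = 0$ and recovers the orbit of Theorem \ref{thm_intro_base_orbit}.

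The next step is to show this depends only on the class of $A$ in $J_x(k)/2J_x(k)$, and not on $B$, the auxiliary isomorphism $f : \cL_B \to t_B^\ast \cL$, or the choice of $\cL_B$. Here is where Lemma \ref{lem_descent_via_theta_groups} and the functoriality of Lurie's construction (Lemma \ref{lem_functoriality_of_isomorphisms}) do the work: the choice of $f$ gives a $k^s$-isomorphism $F : (\widetilde{H}_\cL)_{k^s} \cong (\widetilde{H}_{\cL_B})_{k^s}$, hence a $k^s$-isomorphism $\widetilde{V}_{k^s} \cong (\widetilde{V}_B)_{k^s}$, and the associated descent cocycle $\sigma \mapsto F^{-1}\,{}^\sigma F$ is, by Lemma \ref{lem_descent_via_theta_groups}, the coboundary $\sigma \mapsto {}^\sigma B - B$, i.e.\ the cocycle representing $\delta(A) \in H^1(k, J_x[2])$. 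Transporting through Lurie's construction, the isomorphism $F$ induces a $k^s$-isomorphism of triples $(H, \theta, T) \cong (H_B, \theta_B, T_B)$ inducing the identity on $\Lambda$, and by Lemma \ref{lem_functoriality_of_isomorphisms}(2) the corresponding cocycle in $\Aut(\widetilde{V}; V) \cong V^\vee$ is identified with a cocycle valued in $(T[2] \cap G)(k^s) \cong N_V \subset T[2](k^s)$ acting by $\Ad$. Thus $H_B$, with its $\theta_B$ and the orbit of $\kappa_C$, is obtained from $(H, \theta, \kappa_C)$ by twisting by a class in the image of $\delta$, and independence of the choice of $B$ (changing $B$ by a $2$-torsion point) corresponds to the cocycle being altered by a coboundary — so the orbit depends only on $\delta(A)$, equivalently on $A \bmod 2J_x(k)$. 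Injectivity of $A \bmod 2 J_x(k) \mapsto$ (orbit) then reduces, via the standard dictionary between twisted forms and Galois cohomology, to injectivity of $\delta : J_x(k)/2J_x(k) \hookrightarrow H^1(k, J_x[2])$ together with the fact that the map $H^1(k, (T[2]\cap G)) \to G(k)\backslash X_x(k)$ sending a cocycle to the twist of the base orbit has the base orbit's stabilizer-cohomology as its fibre over the basepoint; since $\Stab_G(\kappa_C) \supseteq T[2]\cap G$ and one checks the relevant map on $H^1$ is injective on the image of $\delta$, the composite is injective.

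The main obstacle I expect is the bookkeeping in the previous paragraph: making precise that ``twist the input $\widetilde{V}$ by the cocycle $\delta(A)$'' corresponds under Lurie's construction to ``twist the output orbit $(H,\theta,\kappa_C)$ by the image of $\delta(A)$ in $H^1(k, \Stab_G(\kappa_C))$'', in a way that is genuinely canonical (independent of $f$, of $\cL_B$, and compatible with the rigidification by the standard representation). The potential subtlety is that the standard-representation rigidification was used to pin down $\varphi : H \to H_0$ up to $G(k)$-conjugacy rather than up to $H^\theta(k)$-conjugacy in the $E_7$ case, so one must check that the twisting cocycle, which a priori lands in $\Aut(\widetilde{V};V) \cong V^\vee$, actually lands in the subgroup $N_V$ corresponding to $T[2]\cap G$ (this is exactly what the commutator-pairing computation in Lemma \ref{lem_weil_pairing_defined_by_commutator} and the inclusion $(T[2]\cap G) \cong N_V$ guarantee, since $\delta(A)$ pairs trivially with the radical of the symplectic form) — so that the induced automorphism lies in $\Ad(G)$ and the rigidified isomorphism $\varphi_B$ can be chosen compatibly. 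Once this is set up carefully, functoriality in $k$ is automatic because every step — forming $t_B^\ast\cL$, the Heisenberg group, the pushout, $\ker\chi_q$, and Lurie's construction — commutes with base change along a field extension $k \hookrightarrow k'$, and $B$ can be chosen over $k'^s$ compatibly.
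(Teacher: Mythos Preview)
Your approach is essentially the paper's: twist the Heisenberg input by $A$, feed into Lurie's construction, rigidify by the standard representation, and then compare with the base orbit via Galois cohomology using Lemmas \ref{lem_descent_via_theta_groups} and \ref{lem_functoriality_of_isomorphisms}. The construction of the orbit and the identification of the twisting cocycle with $\delta(A)$ are exactly what the paper does.

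The one place where your argument is weaker than the paper's is the injectivity step. You write $\Stab_G(\kappa_C) \supseteq T[2]\cap G$ and then appeal to a vague ``one checks the relevant map on $H^1$ is injective on the image of $\delta$''. In fact one has \emph{equality} $Z_G(\varphi_0^{-1}(\kappa_C)) = T[2]\cap G \cong N_V \cong J_x[2]$ (this is \cite[Corollary 2.10]{Tho13} for a regular semisimple element, combined with Proposition \ref{prop_plane_quartic_commutative_diagram}), and this is what makes the argument clean: the standard bijection
\[
G(k)\backslash X_x(k)\;\cong\;\ker\bigl(H^1(k,Z_G(\varphi_0^{-1}(\kappa_C)))\to H^1(k,G)\bigr)
\]
is then an \emph{injection} $G(k)\backslash X_x(k)\hookrightarrow H^1(k,J_x[2])$, and your cocycle computation shows that the composite $J_x(k)/2J_x(k)\to G(k)\backslash X_x(k)\hookrightarrow H^1(k,J_x[2])$ is $\delta$. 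That single statement gives well-definedness (the orbit depends only on $\delta(A)$) and injectivity (because $\delta$ is injective) at once, with no further checking required. With only the containment you wrote, you would not know that distinct classes in $H^1(k,T[2]\cap G)$ give distinct orbits, so the phrase ``one checks'' is hiding precisely the fact you need.
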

The map (\ref{eqn_orbits_from_rational_points}) will extend the map of Theorem \ref{thm_base_orbits}, in the sense that the image of the identity element of $J_x(k)/2J_x(k)$ under (\ref{eqn_orbits_from_rational_points}) equals the image of $x \in \cS(k)$ under (\ref{eqn_base_orbit}).
\begin{proof}The proof is a twist of the proof of Theorem \ref{thm_base_orbits}, using the ideas of \S \ref{sec_heisenberg_and_descent}. We treat first the $E_7$ case. Let $A \in J_x(k)$ be a rational point. Choose $B \in J_x(k^s)$ such that $[2](B) = A$. Then the field of definition of the line bundle $t_B^\ast \cL$ is equal to $k$, and we choose a bundle $\cL_B$ over $k$ which becomes isomorphic to $t_B^\ast \cL$ over $k^s$. We continue to denote $\Lambda = \Pic(S_{k^s})$, $V = \Lambda/2\Lambda$, and associate to $\cL_B$ the Heisenberg group $\widetilde{H}_{\cL_B}$, which fits into an exact sequence
\[ \xymatrix@1{ 1 \ar[r] &  \bbG_m \ar[r] &  \widetilde{H}_{\cL_B} \ar[r] & J_x[2]\ar[r] & 1.} \]
Arguing exactly as in the proof of Theorem \ref{thm_base_orbits}, we obtain an extension
\[ \xymatrix@1{ 1 \ar[r] & \{ \pm 1\} \ar[r] & \widetilde{V}_B \ar[r] & V \ar[r] & 1,} \]
together with a homomorphism $\widetilde{V}_B \to \widetilde{H}_{\cL_B}$ through which the group $\widetilde{V}_B$ acts on the space $W_B = H^0(J_x, \cL_B)$, an 8-dimensional $k$-vector space. Over $k^s$, this defines an irreducible representation of the abstract group $\widetilde{V}_B(k^s)$.

Using the constructions of \S \ref{sec_group_with_involution}, we associate to the triple $(\Lambda, \widetilde{V}_B, W_B)$ a group $H_B$ with involution $\theta_B$, maximal torus $T_B \cong \Hom(\Lambda, \bbG_m)$, and an action of the Lie algebra $\frg_B = \frh_B^{\theta_B}$ on $W_B$. Just as in the proof of Theorem \ref{thm_base_orbits}, the existence of $W_B$ implies that the groups $H_B$ and $G_B$ are split, and $T_B(k)$ has a point $\kappa_C$, well-defined up to inversion. By Proposition \ref{prop_stable_involution}, we can find an isomorphism $\varphi_B : H \to H_B$ which intertwines $\theta$ and $\theta_B$, and under which $W_B$ corresponds to the `standard representation' of $\frg \cong \frs\frl_8$. The choice of $\varphi_B$ is then unique up to the action of $G(k)$, and we associate to the point $B$ the orbit $G(k) \cdot \varphi_B^{-1}(\kappa_C) \subset Y_x(k)$.

We observe that if $A = B = 0$, the identity of $J_x(k)$, then the above construction reduces to that of Theorem \ref{thm_base_orbits}. In general, we must show that the orbit $G(k) \cdot \varphi_B^{-1}(\kappa_C) \subset P_x(k)$ depends only on the image of $A$ in $J_x(k)/2J_x(k)$ (and not on the choice of $B$), and that distinct elements of $J_x(k)/2J_x(k)$ give rise to distinct orbits. Let $\varphi_0^{-1}(\kappa_C) \in Y_x(k)$ be the point constructed in the proof of Theorem \ref{thm_base_orbits}. Since $G(k^s)$ acts transitively on $P_x(k^s)$, a well-known principle asserts that there is a canonical bijection
\begin{equation}\label{eqn_galois_cohomology_set} G(k) \backslash Y_x(k) \cong \ker \left( H^1(k, Z_G(\varphi_0^{-1}(\kappa_C))) \to H^1(k, G) \right),
\end{equation}
under which the base orbit $G(k) \cdot \varphi_0^{-1}(\kappa_C)$ corresponds to the marked element; see, for example, \cite[Proposition 1]{Bha12a}. By \cite[Corollary 2.10]{Tho13} and Proposition \ref{prop_plane_quartic_commutative_diagram}, there is a canonical isomorphism
\[ Z_G(\varphi_0^{-1}(\kappa_C)) \cong Z_{G_0}(\kappa_C) \cong \text{image }(V \to V^\vee) \cong J_x[2]. \]
We will show that under the composite
\[ G(k) \backslash Y_x(k) \hookrightarrow  H^1(k, Z_G(\varphi_0^{-1}(\kappa_C))) \cong H^1(k, J_x[2]), \]
the orbit $G(k) \cdot \varphi_B^{-1}(\kappa_C)$ is mapped to the image of $A$ under the 2-descent homomorphism of \S \ref{sec_heisenberg_and_descent}.

The pullback $t_B^\ast$ defines a canonical isomorphism $\widetilde{V} \cong \widetilde{V}_B$ over $k^s$ by the formula of  (\ref{eqn_isomorphism_of_heisenberg_groups}). This gives rise to an isomorphism of triples $F : (H_0, \theta_0, T_0) \cong (H_B, \theta_B, T_B)$ which induces the identity on $\Hom(\Lambda, \bbG_m)$ under the identification of this torus with $T_0$ and $T_B$. According to Lemma \ref{lem_functoriality_of_isomorphisms}, we can identify $F^{-1} {}^\sigma F$ with an element of $V^\vee$. Lemma \ref{lem_descent_via_theta_groups} now implies that this element in fact lies in the image of the homomorphism $V \to V^\vee$, and that under the identification of this image with $J_x[2]$, is identified with the cocycle $\sigma \mapsto {}^\sigma B - B$. This identity of cocycles implies the desired identity of cohomology classes, and completes the proof in this case.

The proof of the theorem in the remaining cases $E_6$, $\fre_7$, and $\fre_6$ simply requires analogous modifications to the proof of Theorem \ref{thm_base_orbits}. We work out the $E_6$ case here. Let us therefore take $x = (C, P) \in \cS(k)$, so that $P$ is a point such that $T_P C = \ell$ is a bitangent in the canonical embedding of the curve $C$. The root lattice $\Lambda_\ell$ is defined, and we define $V = \Lambda_\ell/2\Lambda_\ell$. The natural symplectic pairing on $V$ is non-degenerate, and the quadratic form $q : V \to \bbF_2$ arising from the form on on $\Lambda_\ell$ agrees with the quadratic form on $V$ arising from the isomorphism $V \cong J_x[2]$ and the odd theta characteristic $\kappa$ corresponding to $\ell$, by Proposition \ref{prop_symplectic_pairings_are_identified}. Let $A \in J_x(k)$, and choose a point $B \in J_x(k^s)$ with $[2](B) = A$. Let $\cL_B$ be a descent of the line bundle $t_B^\ast \cL$ to $k$. We then have the Heisenberg group $\widetilde{H}_{\cL_B}$:
\[ \xymatrix@1{ 1 \ar[r] & \bbG_m \ar[r] & \widetilde{H}_{\cL_B} \ar[r] & J_x[2] \ar[r] & 1.} \]
Arguing exactly as in the proof of Theorem \ref{thm_base_orbits}, we obtain an extension
\[ \xymatrix@1{ 1 \ar[r] & \{ \pm 1 \} \ar[r] & \widetilde{V}_B \ar[r] & V \ar[r] & 1,} \]
and $\widetilde{V}_B$ acts on the 8-dimensional $k$-vector space $W_B = H^0(J_x, \cL_B)$ through a homomorphism $\widetilde{V}_B \to \widetilde{H}_{\cL_B}$. We can apply the constructions of \S \ref{sec_group_with_involution} to the triple $(\Lambda_\ell, \widetilde{V}_B, W_B)$ to obtain a group $H_B$ with involution $\theta_B$, maximal torus $T_B \cong \Hom(\Lambda, \bbG_m)$, and an action of the Lie algebra $\frg_B = \frh_B^{\theta_B}$ on $W_B$. The existence of $W_B$ implies that the groups $H_B$ and $G_B$ are split, and $T_B(k)$ has a point $\kappa_C = \kappa(C, P, e)$ which depends on a choice of component $e$ of $\pi^{-1}(\ell) = e \cup f$. By Proposition \ref{prop_stable_involution}, we can find an isomorphism $\varphi_{B, e} : H \to H_B$ which intertwines $\theta$ and $\theta_B$, and under which the `standard representation' of $\frh$ corresponds to the minuscule representation of $\frh_B$ corresponding to the class of  $e$ in $\Lambda_\ell^\vee/\Lambda_\ell$. The choice of $\varphi_{B, e}$ is then unique up to the action of $G(k)$, and we associate to the point $B$ the orbit $G(k) \cdot \varphi_{B, e}^{-1}(\kappa(C, P, e)) \subset Y_x(k)$. Just as in the $E_7$ case, we can check that the map $B \mapsto G(k) \cdot \varphi_{B, e}^{-1}(\kappa(C, P, e))$ descends to an injection $J_x(k)/2J_x(k) \hookrightarrow G(k) \backslash Y_x(k)$. This completes the proof.
\end{proof}
\subsection{An example}\label{sec_reals}
To illustrate our theorem, we describe explicitly what happens in the $\fre_6$ case, when $k = \bbR$. Then the reference group $H$ is a split adjoint group of type $E_6$ over $\bbR$, $H^\theta = G$ is isomorphic to $\PSp_8$, a projective symplectic group in 8 variables, and $V = \frh^{d \theta = -1}$ is a 42-dimensional irreducible subrepresentation of $\wedge^4(8)$. The corresponding family of curves is the family $(C, P, t)$ of smooth non-hyperelliptic genus 3 curves, equipped with a point $P$ which is a hyperflex in the canonical embedding, and a non-zero Zariski tangent vector $t \in T_PC$. It consists of the smooth members in the family
\[ y^3 = x^4 + y(p_2x^2 + p_5x + p_8) + p_6x^2 + p_9x + p_{12} \]
(here we are using the affine chart which makes $P$ the unique point at infinity). For each tuple
\[ (p_2, p_5, p_8, p_6, p_9, p_{12}) \in \bbR^6 \]
 for which this curve is smooth, we can write down the following data:
\begin{itemize}
\item Topological invariants of the curve $C(\bbR) \subset \bbP^2(\bbR)$: following \cite{Gro81}, we write $n(C)$ for the number of connected components of $C(\bbR)$, and $a(C) = 0$ or $1$ depending on whether or not $C(\bbC) - C(\bbR)$ is disconnected.
\item A stable $G$-orbit $V_x \subset V^s$, and an $H(\bbR)$-conjugacy class of maximal tori $T \subset H$ ($T$ is the stabilizer in $H$ of the base orbit in $V_x(\bbR)$, which is regular semisimple).
\item An injection $J(\bbR) / 2 J (\bbR) \hookrightarrow G(\bbR) \backslash V_x(\bbR)$, where $J$ is the Jacobian of the curve $C$.
\end{itemize}
The isomorphism classes of tori in $H$ are in bijection with the conjugacy class of elements in the Weyl group $W$ of order 2 \cite[\S 6]{Ree11}. It turns out that these correspond to the possible topological types of the curve $C(\bbR)$ in $\bbP^2(\bbR)$, as follows:
\[ \begin{array}{c|ccccc} \text{conjugacy class} & n(C) & a(C)  & \text{no. of real bitangents} & \# J(\bbR)/2J(\bbR) & \# G(\bbR) \backslash V_x(\bbR) \\
1 & 4 & 0 & 28 & 2^3 & 36\\
s_1 & 3 & 1 & 16 & 2^2 & 10\\
s_1 s_2 & 2 & 1 & 8 & 2 & 3 \\
s_1 s_2 s_3 & 1 & 1 & 4 & 1 & 1 \\
\tau & 2 & 0 & 4 & 2 & 3
\end{array} \]
The table should be interpreted as follows: suppose that a curve $C$ has the given invariants. (It follows from the table on \cite[p. 174]{Gro81} that the only possible values for the pair $(n(C), a(C))$ are the ones listed above.) Then the real structure on the torus $T$ is the one determined by the Weyl element in the left-hand column, and the data in the remaining three columns is as given. Here $s_1, s_2, s_3 \in W$ are commuting simple reflections, and $\tau \in W$ may be constructed as follows: choose a $D_4$ root system inside $\Lambda$. Then $-1 \in W(D_4)$, and $\tau$ is the element that acts as $-1$ on the span of the $D_4$ roots, and as $+1$ on their orthogonal complement.  The elements $1, s_1, s_1 s_2, s_1 s_2 s_3$, and $\tau$ are pairwise non-conjugate in $W$ and every involution in $W$ is conjugate to one of these. (For the classification of conjugacy classes of involutions in Weyl groups, see \cite{Ric82a}.)

One can check explicitly that each of the above combinations of $(n(C), a(C))$ does indeed occur. The table can be verified as follows. It follows from our theory that there is an isomorphism $J[2](\bbC) \cong \Lambda_\ell / 2 \Lambda_\ell$ under which the action $\tau$ of complex conjugation corresponds to the action of an involution $w \in W(\Lambda_\ell) = W$ and which identifies the Weil pairing on the left-hand side with the natural symplectic pairing on the right. On the other hand, \cite[Proposition 4.4]{Gro81} shows that the data of the pair $(J[2](\bbC), \tau)$ (as symplectic $\bbF_2$-vector space with involution) is sufficient to recover $n(C)$ and $a(C)$. A calculation shows that the Weyl involutions biject with the possible choices for the pair $(n(C), a(C))$. This determines the number of real bitangents and the quantity $\# J(\bbR) / 2 J(\bbR)$.

We justify the final column using the results in the appendix. The set $G(\bbR) \backslash V_x(\bbR)$ is in canonical bijection with the set $\ker( H^1(\bbR, J[2]) \to H^1(\bbR, G) )$, the marked element corresponding to the trivial element of $J(\bbR) / 2 J (\bbR)$. We analyze this kernel using the diagram of $\bbR$-groups with exact rows, whose existence is asserted by the main result in the appendix to this paper:
\[ \xymatrix{ 1 \ar[r] & \mu_2 \ar[r] & \Sp_8 \ar[r] & \PSp_8 \ar[r] & 1 \\
1 \ar[r] & \mu_2 \ar[r]\ar[u] & \widetilde{V}\ar[r]\ar[u] & J[2] \ar[r]\ar[u] & 1, } \]
where $\widetilde{V}$ is the extension used in the proof of Theorem \ref{thm_base_orbits}; it is a subgroup of the Heisenberg group $\widetilde{H}_\cL$. Using the triviality of the set $H^1(\bbR, \Sp_8)$, we get an identification
\[ G(\bbR) \backslash V_x(\bbR) \cong \ker( H^1(\bbR, J[2]) \to H^1(\bbR, G) ) \cong \ker (H^1(\bbR, J[2] ) \to H^2(\bbR, \mu_2)), \]
where the arrow $q : H^1(\bbR, J[2] ) \to H^2(\bbR, \mu_2) \cong \bbZ / 2 \bbZ$ is the connecting map arising from the bottom row of the above commutative diagram. (Note that we are working here with non-abelian Galois cohomology; the connecting map is defined, because $\mu_2$ is central, but it need not be a homomorphism of groups.)

Tate duality gives a perfect pairing on $H^1(\bbR, J[2] )$, with respect to which $J(\bbR) / 2 J(\bbR)$ is a maximal isotropic subspace. The map $q$ is a quadratic refinement of this pairing, in the sense of \S \ref{sec_quadratic_forms}, which is identically zero on the subspace $J(\bbR) / 2 J(\bbR)$ (see \cite[Corollary 4.7]{Poo12}). It follows that $a(q) = 0$, and the set $q^{-1}(0)$ has $2^{g-1}(2^g+1)$ elements, where $g = \dim_{\bbF_2} J(\bbR) / 2 J(\bbR)$. This leads to the final column in the above table.

\newpage
\begin{appendix}
\section*{Appendix A. A converse to Lurie's functorial construction of simply laced Lie algebras}
\addcontentsline{toc}{section}{Appendix A. A converse to Lurie's functorial construction of simply laced Lie algebras. By Tasho Kaletha}
\setcounter{section}{1}
\setcounter{theorem}{0}
\subsection*{By Tasho Kaletha\footnote{This research is supported in part by NSF grant DMS-1161489.}}
In \S \ref{sec_group_with_involution} a construction due to Lurie was recalled, which associates in a functorial way a semi-simple Lie algebra $\mf{h}$ to a simply laced root lattice $\Lambda$ equipped with an extension $\tilde V$ of $V=\Lambda/2\Lambda$ by $\{\pm 1\}$. In fact, the construction produces not just $\mf{h}$, but also some additional structure, including a Cartan subalgebra $\mf{t}$. This construction was moreover refined in several ways. It was shown that an action of the Galois group of a field $k$ on $\tilde V$ is translated to a $k$-structure on $\mf{h}$; it was shown that $\mf{h}$ comes equipped with a stable involution $\theta$ (i.e.\ an involution satisfying the first condition of Proposition \ref{prop_stable_involution}); and finally a construction was described that produces from a rational representation $\rho$ of the finite algebraic $k$-group $\tilde V$ with $\rho(-1)=-1$ a rational representation $d\pi$ of the Lie-algebra $\mf{g}=\mf{h}^\theta$.

The purpose of this appendix is to provide a converse to this refinement of Lurie's construction. The basic question is: given $\mf{h}$, $\mf{t}$, and $\theta$, is it possible to recover the extension $\tilde V$ in a concrete way? That this should be the case, and in fact where the extension is to be found, was suggested to us by Jack Thorne. His idea was that the extension $\tilde V$ should be the preimage in $G_\tx{sc}$ of the 2-torsion subgroup of $T_\tx{sc}$, where $T_\tx{sc}$ is the maximal torus of the simply connected group $H_\tx{sc}$ with Lie-algabra $\mf{h}$ given by the Cartan subalgebra $\mf{t}$, and $G_\tx{sc}$ is the simply connected group with Lie-algebra $\mf{g}$. In this appendix we will show that this preimage is indeed an extension of $V$ by $\{\pm 1\}$ and we will moreover construct an isomorphism from this extension to $\tilde V$ that preserves the action of the Galois group of $k$ and intertwines the representations $\rho$ and $\pi$.

We thank Jack Thorne for sharing with us this interesting question and for including our results into his paper.

\subsection{Statement of two propositions}

Let $k$ be a field of characteristic 0, $k^s$ a fixed separable closure, $\Gamma_k=\tx{Gal}(k^s/k)$.
Let $\Lambda$ be a finite free $\Z$-module equipped with a symmetric bilinear form $\<-,-\> : \Lambda \otimes \Lambda \rw \Z$ and satisfying the conditions
\begin{itemize}
\item $\tx{rk}\Lambda>1$.
\item For any non-zero $\lambda \in \Lambda$, the value $\<\lambda,\lambda\>$ is a positive even integer.
\item The set $\Gamma = \{\lambda \in \Lambda| \<\lambda,\lambda\>=2\}$ generates $\Lambda$.
\end{itemize}
As discussed in \cite{Lur01}, these are precisely the root lattices of simply laced root systems. Here we are excluding the system $A_1$. The subset $\Gamma \subset \Lambda$ is the set of roots. We shall place the additional assumption that $\Gamma$ is irreducible. This assumption is made just for convenience and can easily be removed.

Write $q(\lambda)=\frac{1}{2}\<\lambda,\lambda\>$, this is a quadratic form. Let $V=\Lambda/2\Lambda$ and let
\[ 1 \rw \{\pm 1\} \rw \tilde V \rw V \rw 0 \]
be an extension of groups (we write the group law of $\tilde V$ multiplicatively) with the property that for each $\tilde v \in \tilde V$ and its image $v \in V$, the equality $\tilde v^2=(-1)^{q(v)}$ holds. This equation characterizes the isomorphism class of this extension.

Assume we are given an action of $\Gamma_k$ on $\Lambda$ that preserves $\<-,-\>$, as well as an action of $\Gamma_k$ on $\tilde V$ that preserves the subgroup $\{\pm 1\}$, such that the two actions on $V$ induced from these coincide. Let $\tilde\Lambda = \Lambda \times_V \tilde V$ and let $\tilde\Gamma \subset\tilde\Lambda$ be the preimage of $\Gamma$. The extension $\tilde\Lambda$ of $\Lambda$ by $\{\pm 1\}$ inherits an action of $\Gamma_k$ and this action preserves $\tilde\Gamma$.

Let $\mf{h}$ be the Lie algebra associated to this data as described in \S \ref{sec_group_with_involution}. It comes equipped with a Cartan subalgebra $\mf{t}$ and a map $\tilde\Gamma \rw \mf{h}$ sending each $\tilde\gamma$ to a non-zero root vector $X_{\tilde\gamma} \in \mf{h}_\gamma$ and having the properties
\begin{itemize}
\item $X_{-\tilde\gamma}=-X_{\tilde\gamma}$;
\item $[X_{\tilde\gamma},X_{\tilde\gamma'}]=X_{\tilde\gamma\tilde\gamma'}$ if $\gamma+\gamma' \in \Gamma$ (by assumption $\tilde\gamma\tilde\gamma' \in \tilde\Gamma$);
\item $[X_{\tilde\gamma},X_{\tilde\gamma'}]=(\tilde\gamma\tilde\gamma')H_\gamma$ if $\gamma'=-\gamma$, where $H_\gamma \in \mf{t}$ is the coroot for $\gamma$ (by assumption $\tilde\gamma\tilde\gamma' \in \{\pm 1\}$).
\end{itemize}

Let $H=\tx{Aut}(\mf{h})^\circ$ be the corresponding adjoint group, $H_\tx{sc}$ its simply connected cover, and $\theta$ the involution of $\mf{h}$ which acts by $-1$ on $\mf{t}$ and by $\theta(X_{\tilde\gamma})=-X_{\tilde\gamma^{-1}}$ on the root subspaces. It induces an involution on $H$ and $H_\tx{sc}$ as well and this involution acts by inversion of the maximal tori $T$ and $T_\tx{sc}$ whose Lie algebra is $\mf{t}$. Let $\mf{g}=\mf{h}^\theta$ be the fixed Lie subalgebra and $G=H^{\theta,\circ}$ the connected component of the fixed subgroup. Let $G'=H_\tx{sc}^\theta$. According to \cite[Theorem 8.1]{Ste68} $G'$ is connected. Its image in $H$ is equal to $G$. Since $\theta$ commutes with the action of $\Gamma_k$, the groups $G$ and $G'$ are defined over $k$.

\begin{proposition} \label{pro:fund} The group $G'$ is semi-simple and its fundamental group has order 2. \end{proposition}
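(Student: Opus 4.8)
The plan is to check both assertions after base change to $k^s$, since semisimplicity and the order of the fundamental group are geometric; so assume $k = k^s$. Connectedness of $G' = H_\tx{sc}^\theta$ is \cite[Theorem 8.1]{Ste68}, already invoked above, and $G'$ is reductive because it is the fixed-point subgroup of a finite-order automorphism of the reductive group $H_\tx{sc}$ in characteristic $0$. So the content is that the central torus of $G'$ is trivial and that the isogeny $G_\tx{sc} \to G'$ has kernel of order exactly $2$.

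For semisimplicity the key point is that $\theta$ inverts the maximal torus $T_\tx{sc}$ of $H_\tx{sc}$, so its $2$-torsion subgroup $T_\tx{sc}[2]$ is contained in $G'$. First I would compute $Z_{H_\tx{sc}}(T_\tx{sc}[2])$: it is a connected reductive subgroup with maximal torus $T_\tx{sc}$ whose set of roots consists of those $\alpha$ with $\alpha \in 2X^\ast(T_\tx{sc})$; since $X^\ast(T_\tx{sc})$ is the weight lattice of $\Lambda$, and no root of an irreducible simply laced root system of rank $>1$ lies in twice the weight lattice, this set is empty, so $Z_{H_\tx{sc}}(T_\tx{sc}[2]) = T_\tx{sc}$. (This is exactly where the hypothesis $\tx{rk}\,\Lambda>1$, i.e.\ the exclusion of $A_1$, enters: for $A_1$ the root is twice the fundamental weight and the centralizer is all of $H_\tx{sc}$.) Taking $\theta$-fixed points gives $Z_{G'}(T_\tx{sc}[2]) = T_\tx{sc}^\theta = T_\tx{sc}[2]$, which is finite. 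Consequently the central torus $Z(G')^\circ$, being central in $G'$, lies in the finite group $Z_{G'}(T_\tx{sc}[2])$ and is therefore trivial; hence $\mf{z}(\mf{g}) = 0$ and $G'$ is semisimple.

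It remains to show the isogeny $G_\tx{sc} \to G'$ has kernel of order $2$; this is the substantive step. Since that kernel is exactly the preimage's kernel $\ker(\widetilde V \to T_\tx{sc}[2])$ with $\widetilde V$ the preimage of $T_\tx{sc}[2]$ in $G_\tx{sc}$, and $T_\tx{sc}[2] \cong V$ under $X_\ast(T_\tx{sc})/2X_\ast(T_\tx{sc}) \cong \Lambda/2\Lambda$, the assertion is precisely that $\widetilde V$ is an extension of $V$ by a group of order $2$ — which is what the remainder of the appendix establishes in detail (along with the comparison of $\widetilde V$ with Lurie's input). To prove the bare statement about the order, I would pin down the root datum of $G'$: choose a $\theta$-stable maximal torus $T_1$ of $H_\tx{sc}$ with $\theta$-fixed part $S=(T_1^\theta)^\circ$ of maximal dimension, so that $S$ is a maximal torus of $G'$; then $X_\ast(S) = \ker\!\left(1-\theta_\ast : X_\ast(T_1)\to X_\ast(T_1)\right)$, and one reads off the coroot lattice of $(G',S)$ from the roots of $(H_\tx{sc},T_1)$ — real roots restrict trivially to $S$, while the imaginary (compact) roots and the averaged pairs of complex roots give the coroots of $G'$. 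A computation, uniform over the simply laced types of rank $>1$ or else done type-by-type for $A_n$ ($n\ge 2$), $D_n$, $E_6$, $E_7$, $E_8$, shows that this coroot lattice has index $2$ in $X_\ast(S)$. Equivalently, and perhaps more transparently, one may identify $G'$ with the complexification of the maximal compact subgroup of the simply connected split real form of $\mf{h}$ and quote its place in Cartan's classification: for every simple simply laced $\mf{h}$ of rank $>1$ that maximal compact has fundamental group of order $2$ (the excluded $A_1$ being the degenerate case where it is a one-dimensional torus). I expect this last determination of the root datum — equivalently, the structure of $H_\tx{sc}^\theta$ — to be the main obstacle; the reductions, the reductivity, and the semisimplicity argument are formal.
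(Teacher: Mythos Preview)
Your proposal is correct and takes a route genuinely different from the paper's. For semisimplicity, the paper invokes the Kac coordinates of $\theta$ and \cite[\S 3.7]{Ree10} to see that $Z(G)$ is finite; your argument via the centralizer of $T_\tx{sc}[2]$ is more elementary, resting only on the observation that no root lies in twice the weight lattice for an irreducible simply laced system of rank~$>1$. (One small point: you assert $Z_{H_\tx{sc}}(T_\tx{sc}[2]) = T_\tx{sc}$, which needs connectedness of this centralizer. That holds because $H_\tx{sc}$ is simply connected, but in any case the weaker statement $Z_{H_\tx{sc}}(T_\tx{sc}[2])^\circ = T_\tx{sc}$ already forces $Z(G')^\circ = 1$.) For the fundamental group, the paper computes $|Z(G')| = |Z(H_\tx{sc})^\theta|\cdot b_\iota$ via Reeder's theory and compares, type by type, with the connection index of the Dynkin diagram of $G$ read off from the Kac diagram of $\theta$. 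Your identification of $G'$ with the complexification of the maximal compact of the simply connected split real form of $\mf{h}$---valid because $\theta$ is precisely the Chevalley involution---reduces the question to Cartan's list: $\SO(n{+}1)$ for $A_n$, a $\mu_2$-quotient of $\mathrm{Spin}(n)\times\mathrm{Spin}(n)$ for $D_n$, and $\Sp(4)/\mu_2$, $\mathrm{SU}(8)/\mu_2$, $\mathrm{Spin}(16)/\mu_2$ for $E_{6,7,8}$, each with $\pi_1$ of order~$2$. Both arguments are ultimately case-by-case; yours trades the Kac-diagram machinery for the classification of real forms. One caution: your aside that ``the remainder of the appendix establishes'' the extension structure of the preimage of $T_\tx{sc}[2]$ is circular---the rest of the appendix \emph{uses} Proposition~\ref{pro:fund} rather than reproving it---so the independent argument you then sketch is indeed required.
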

Let $G_\tx{sc}$ be the simply connected cover of $G$. We will from now on denote the fundamental group of $G'$ by $\{\pm 1\} \subset G_\tx{sc}$. For a root $\gamma \in \Gamma$, let $\gamma^\vee$ be the corresponding coroot. The map
\[ V \rw T_\tx{sc},\qquad [\gamma] \mapsto \gamma^\vee(-1) \]
identifies $V$ with the 2-torsion subgroup of $T_\tx{sc}$ and this subgroup belongs to $G'$. We form the pull-back extension
\[ \xymatrix{
1\ar[r]&\{\pm 1\}\ar[r]&G_\tx{sc}\ar[r]&G'\ar[r]&1\\
1\ar[r]&\{\pm 1\}\ar@{=}[u]\ar[r]&X\ar[r]\ar[u]&V\ar[r]\ar[u]&1
} \]
This extension inherits an action of $\Gamma_k$.

Finally, given a rational representation $\rho : \tilde V \rw \tx{GL}(W)$ of the algebraic $k$-group $\tilde V$ on a finite-dimensional $k$-vector space $W$ such that $\rho(-1)=-1$, we define a representation $d\pi : \mf{g} \rw \mf{gl}(W)$ by $d\pi(X_{\tilde\gamma}-X_{\tilde\gamma^{-1}})=\rho(\tilde\gamma)/2$, and let $\pi : G_\tx{sc} \rw \tx{GL}(W)$ be the corresponding rational representation of $G_\tx{sc}$. Recall that Proposition \ref{pro:lie_alg_rep} asserts that $d\pi$ is indeed a Lie-algebra representation.

\begin{proposition} \label{pro:ext} There exists an isomorphism of extensions $\Phi : \tilde V \rw X$ which is $\Gamma_k$-equivariant and intertwines $\rho$ with $\pi|_X$ for all representations $\rho$ as above. \end{proposition}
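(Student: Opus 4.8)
The plan is to construct $\Phi$ over $k^s$ and then descend: every construction in sight (the root lattice data, the Lie algebra $\frh$, the torus $T_{\mathrm{sc}}$, $\theta$, the groups $G'$ and $G_{\mathrm{sc}}$, the subgroup $X$, and the representation $\pi$) is functorial in the $\Gamma_k$-action, so a $\Gamma_k$-equivariant isomorphism over $k^s$ automatically descends. Over $k^s$ both $\tilde V$ and $X$ are finite groups fitting into central extensions of $V$ by $\{\pm1\}$. Fix a $\mathbb Z$-basis $\gamma_1,\dots,\gamma_n$ of $\Lambda$ consisting of roots (a set of simple roots). Comparing orders, $\tilde V$ is generated by chosen lifts $\tilde\gamma_1,\dots,\tilde\gamma_n$ subject exactly to the relations: $-1$ is central of order $2$; $\tilde\gamma_i^2=-1$ (because $q(\gamma_i)=1$); and $\tilde\gamma_i\tilde\gamma_j=(-1)^{\langle\gamma_i,\gamma_j\rangle}\tilde\gamma_j\tilde\gamma_i$ (a consequence of $\tilde v^2=(-1)^{q(v)}$, which forces the commutator pairing of $\tilde V$ to be $(v,w)\mapsto(-1)^{\langle v,w\rangle}$). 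So it suffices to exhibit elements $g_{\tilde\gamma_i}\in X$ satisfying these same relations, check that the resulting homomorphism $\Phi\colon\tilde V\to X$ restricts to the identity on $\{\pm1\}$ and induces the identity on $V$ — from which it follows formally that $\Phi$ is an isomorphism of extensions — and then verify $\Gamma_k$-equivariance and the compatibility with representations.

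For a root $\gamma$ set $Z_{\tilde\gamma}=X_{\tilde\gamma}-X_{\tilde\gamma^{-1}}\in\frg=\frh^\theta$; this is semisimple and $Z_{\tilde\gamma^{-1}}=-Z_{\tilde\gamma}$. I would define $g_{\tilde\gamma}\in G_{\mathrm{sc}}(k^s)$ to be the image of a primitive fourth root of unity under the cocharacter of $G_{\mathrm{sc}}$ spanned by $Z_{\tilde\gamma}$ (recall $\sqrt{-1}\in k^s$; informally, ``$\exp(\pi Z_{\tilde\gamma})$''), normalized so that, after integrating the $\frs\frl_2$-triple $(X_{\tilde\gamma},H_\gamma,X_{\tilde\gamma^{-1}})$ to a map $\mathrm{SL}_2\to H_{\mathrm{sc}}$, the image of $g_{\tilde\gamma}$ in $H_{\mathrm{sc}}$ is the image $\gamma^\vee(-1)$ of the central element $-I$. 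Two facts must then be established: (i) $g_{\tilde\gamma}$ lies in $X$ and maps to $[\gamma]\in V$ — which follows from the $\mathrm{SL}_2$-picture together with $\theta(-I)=-I$, so that $\gamma^\vee(-1)\in T_{\mathrm{sc}}[2]=V\subset G'$; and (ii) $g_{\tilde\gamma}^2$ is the nontrivial element $-1$ of $\{\pm1\}=\ker(G_{\mathrm{sc}}\to G')$, equivalently $g_{\tilde\gamma}$ has order $4$ and $g_{\tilde\gamma^{-1}}=g_{\tilde\gamma}^{-1}=-g_{\tilde\gamma}$. Fact (ii) is where Proposition \ref{pro:fund} is essential: it guarantees that $G_{\mathrm{sc}}\to G'$ really is a double cover, so that $g_{\tilde\gamma}^2\in\{\pm1\}$, and one must see it is the nontrivial element. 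The clean way to do this — and simultaneously to get the compatibility with representations — is to introduce a faithful representation $\rho_0\colon\tilde V\to\GL(W_0)$ with $\rho_0(-1)=-\mathrm{id}$ (for instance the sum of the irreducible representations of $\tilde V$ on which $-1$ acts by $-1$) and the associated $\pi_0\colon G_{\mathrm{sc}}\to\GL(W_0)$ of \S\ref{sec_group_with_involution}; using $d\pi_0(Z_{\tilde\gamma})=\rho_0(\tilde\gamma)/2$ and $\rho_0(\tilde\gamma)^2=\rho_0(-1)=-\mathrm{id}$ (so $\rho_0(\tilde\gamma)$ is semisimple of order $4$) one computes directly that $\pi_0(g_{\tilde\gamma})=\rho_0(\tilde\gamma)$ and $\pi_0(-1_{G_{\mathrm{sc}}})=-\mathrm{id}_{W_0}$. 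Since $\pi_0(g_{\tilde\gamma})$ has order $4$ while $g_{\tilde\gamma}^2$ already lies in $\ker(G_{\mathrm{sc}}\to G')$, fact (ii) follows; and $\pi_0$ is injective on $\{\pm1\}$, which is all the faithfulness actually needed below.

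With the $g_{\tilde\gamma}$ in hand, the relations are quick. The square relation is (ii). For the commutator relation, $g_{\tilde\gamma_i}g_{\tilde\gamma_j}g_{\tilde\gamma_i}^{-1}g_{\tilde\gamma_j}^{-1}\in X$ maps to $0$ in the abelian group $V$, hence lies in $\{\pm1\}$; applying $\pi_0$ and using $\pi_0(g_{\tilde\gamma})=\rho_0(\tilde\gamma)$ together with $\tilde\gamma_i\tilde\gamma_j\tilde\gamma_i^{-1}\tilde\gamma_j^{-1}=(-1)^{\langle\gamma_i,\gamma_j\rangle}$ in $\tilde V$ identifies its image under $\pi_0$ with $(-1)^{\langle\gamma_i,\gamma_j\rangle}\mathrm{id}$, and injectivity of $\pi_0$ on $\{\pm1\}$ finishes it. (Alternatively one could verify this by a rank-two computation in the subgroup generated by $\gamma_i$ and $\gamma_j$.) Hence $\Phi\colon\tilde V\to X$ is a well-defined homomorphism; it sends $\tilde\gamma_i$ to an element over $[\gamma_i]$ and $-1$ to $-1$, so it induces the identity on $V$ and on $\{\pm1\}$ and is therefore an isomorphism of extensions (a morphism of extensions inducing the identity on both kernel and quotient is automatically bijective). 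The same $\pi_0$-injectivity observation shows $\Phi(\tilde\gamma)=g_{\tilde\gamma}$ for every root $\gamma$. For $\Gamma_k$-equivariance: $\Gamma_k$ acts compatibly on all the data and ${}^\sigma X_{\tilde\gamma}=X_{{}^\sigma\tilde\gamma}$ forces ${}^\sigma Z_{\tilde\gamma}=Z_{{}^\sigma\tilde\gamma}$, hence ${}^\sigma g_{\tilde\gamma}=g_{{}^\sigma\tilde\gamma}$, so $\Phi$ commutes with the two Galois actions and descends to an isomorphism of $k$-group extensions. For the intertwining property: for any $\rho$ with $\rho(-1)=-\mathrm{id}$ and its associated $\pi$, the computation giving $\pi_0(g_{\tilde\gamma})=\rho_0(\tilde\gamma)$ used only $d\pi(Z_{\tilde\gamma})=\rho(\tilde\gamma)/2$ and $\rho(\tilde\gamma)^2=-\mathrm{id}$, so it yields $\pi(g_{\tilde\gamma})=\rho(\tilde\gamma)$ for all roots; since $\pi|_X\circ\Phi$ and $\rho$ are both homomorphisms $\tilde V\to\GL(W)$ agreeing on the generators $\tilde\gamma_i$, they coincide.

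I expect the main obstacle to be fact (ii) — pinning down the normalization of $g_{\tilde\gamma}$ and showing it has order $4$ rather than $2$, i.e. correctly accounting for the factor of $2$ introduced by the cover $G_{\mathrm{sc}}\to G'$. This is exactly the content that makes Proposition \ref{pro:fund} (fundamental group of order exactly $2$) indispensable, and it is why the auxiliary faithful representation $\rho_0$ must be threaded through the argument rather than carrying everything out inside maximal tori.
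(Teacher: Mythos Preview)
Your approach is correct and takes a genuinely different route from the paper's. The paper first proves a separate lemma (Lemma~\ref{lem:conn}): the preimage in $G_\tx{sc}$ of $\eta_{\tilde\gamma}(\tx{SO}_2)$ is connected, equivalently the preimages of $\gamma^\vee(-1)$ have order~$4$. This lemma is the technical heart of the paper's argument and is established by a lengthy direct computation: one conjugates to a $\theta$-stable Borel using the Kostant cascade, identifies the coroot lattice of $G'$ inside $X_*(S_\tx{sc})^\theta$ via a case analysis (with $A_{2n}$ treated separately), and shows $\alpha^\vee$ lies outside it. Only then does the paper define the lift $\phi(\tilde\gamma)$, and it verifies the commutator relation by an explicit $\SL_3\to\tx{SO}_3\to\PGL_2\to\SL_2$ matrix computation, and the intertwining property by a further computation with the quaternion subgroup of~$\tilde V$. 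Your argument bypasses all of this by threading a single auxiliary representation $\rho_0$ (with $\rho_0(-1)=-\tx{id}$, defined over~$k$) through the whole proof: the primitive cocharacter $\mu$ of the rank-one torus $\tilde S_\gamma\subset G_\tx{sc}$ tangent to $Z_{\tilde\gamma}$ pushes forward under $\pi_0$ to a cocharacter of $\GL(W_0)$ whose derivative is a scalar multiple of $\rho_0(\tilde\gamma)$; if $\tilde S_\gamma\to S_\gamma$ had degree~$1$ the eigenvalues would be $\pm\tfrac{1}{2}$, which is impossible, so the degree is~$2$ and Lemma~\ref{lem:conn} falls out in one line. The identity $\pi_0(g_{\tilde\gamma})=\rho_0(\tilde\gamma)$ then immediately yields the square relation, the commutator relation (since both sides lie in $\{\pm1\}$ where $\pi_0$ is faithful), Galois equivariance, and the intertwining for every~$\rho$. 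The only point needing care in your write-up is making the definition of $g_{\tilde\gamma}$ unambiguous over a general $k^s$ (your ``$\exp(\pi Z_{\tilde\gamma})$'' is analytic); the cleanest fix is either to fix $i\in k^s$ and declare $\mu_{\tilde\gamma}$ to be the primitive cocharacter with $d\mu_{\tilde\gamma}(1)=-2iZ_{\tilde\gamma}$ and set $g_{\tilde\gamma}=\mu_{\tilde\gamma}(i)$, or simply to define $g_{\tilde\gamma}$ as the unique preimage of $\gamma^\vee(-1)$ with $\pi_0(g_{\tilde\gamma})=\rho_0(\tilde\gamma)$ (uniqueness following from $\pi_0(-1)=-\tx{id}$, which your argument establishes). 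The paper's approach is more self-contained and yields Lemma~\ref{lem:conn} as an independent structural fact about coroot lattices; yours is considerably shorter.
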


\subsection{Proof of Proposition \ref{pro:fund}}\label{sec_proof_of_pro:fund}
According to \cite[\S 5.3]{Gro12}, the involution $\theta$ is stable and hence its conjugacy class is uniquely determined. A description of this conjugacy class for each Dynkin type is given in \cite[\S 8]{Gro12} in terms of Kac diagrams. The normalized Kac diagram of the stable involution contains a unique node with label 1, and all other nodes have label 0. According to \cite[\S 3.7]{Ree10}, this implies that the center of $G$ is finite. Thus $G$, and hence also $G'$ is semi-simple. Its Dynkin diagram is obtained by removing the unique node with label 1 from the Kac diagram of the stable involution. In order to prove that the fundamental group of $G'$ has order 2, we argue as follows. According to \cite[\S 3.7]{Ree10}, the order of the center of $G$ is given by $b_\iota$, where $\iota$ is the index of the unique node with label 1 in the Kac diagram, and $b_\iota$ is an integer defined in \cite[\S 3.3]{Ree10}, which according to Theorem 3.7 in loc. cit. is equal to $2$ if $\theta$ is inner and to $1$ if $\theta$ is outer. Since $\theta$ acts by $-1$ on the Cartan subalgebra $\mf{t}$, it is inner if and only if $-1$ belongs to the Weyl group of $(\mf{t},\mf{h})$.

The kernel of the map $G' \rw G$ is equal to $Z(H_\tx{sc})^\theta$. Thus the center of $G'$ has size $|Z(H_\tx{sc})^\theta|\cdot b_\iota$. The proof will be complete once we show that this number is equal to one half of the connection index of the Dynkin diagram of $G$. This can be done by inspection of the individual cases $A_n,n>1$, $D_n$, $E_6$, $E_7$, $E_8$. We give the examples of the exceptional types $E_6$, $E_7$, and $E_8$, and leave the discussion of the classical types $A_n$ and $D_n$ to the reader.

%

For type $E_6$, the Kac diagram of $\theta$ is given by the last row of Table 3 of \cite[\S 8.1]{Gro12} and has the form $0~0~0\Leftarrow 0~1$, so $G$ has type $C_4$. Since $\theta$ is outer, $G$ is adjoint. There are no $\theta$-fixed points in the center of $H_\tx{sc}$, thus $G' \cong \tx{PSp}_4$.

For type $E_7$, the Kac diagram of $\theta$ is given by the last row of Table 4 and has the form $\tiny\arraycolsep=1pt\begin{matrix} 0&0&0&0&0&0&0\\&&&1 \end{matrix}$, so $G$ is of type $A_7$. The center of $G$ has now order 2, because $\theta$ is inner, and moreover the fixed points of $\theta$ in $Z(H_\tx{sc})$ also have order $2$, so the center of $G'$ has order $4$.

For type $E_8$, the Kac diagram of $\theta$ is given by the last row in Table 5 and has the form $\tiny\arraycolsep=1pt\begin{matrix} 1&0&0&0&0&0&0&0\\&&0 \end{matrix}$, so $G$ is of type $D_8$. The center of $G$ has order 2, because $\theta$ is inner. Since $Z(H_\tx{sc})=1$, the center of $G'$ also has order 2.

For the classical types, the relevant diagrams are those in row 2 of Table 10 ($H$ is of type $A_{2n}$ and $G$ is of type $B_n$), row 3 of Table 11 with $k=n-1$ ($H$ is of type $A_{2n-1}$ and $G$ is of type $D_n$), row 3 of Table 14 for $k=n$ even ($H$ is of type $D_n$ and $G$ is of type $D_\frac{n}{2} \times D_\frac{n}{2}$), and row 3 of Table 15 with $l=n$ odd ($H$ is of type $D_n$ and $G$ is of type $B_\frac{n-1}{2} \times B_\frac{n-1}{2}$). Note that $\theta$ is inner for $D_\tx{even}$ and outer for $A_n$ and $D_\tx{odd}$.

\subsection{Proof of Proposition \ref{pro:ext}}

\subsubsection{The group $\tx{SO}_n$}
We define the group $\tx{SO}_n$ to be the subgroup of $\tx{SL}_n$ fixed by the transpose-inverse automorphism. This group is semi-simple when $n>2$. For $n=2$, it is non-canonically isomorphic to $\mb{G}_m$ over $k^s$. One can specify an isomorphism by fixing a 4-th root of unity $i \in k^s$. Then we have
\[ \mb{G}_m \rw \tx{SO}_2,\qquad x \mapsto \frac{1}{2}\begin{bmatrix}x+x^{-1}&i(x-x^{-1})\\ -i(x-x^{-1})&x+x^{-1}\end{bmatrix}. \]
For future reference, we record the formula
\[ \begin{bmatrix} a&b\\ -b&a \end{bmatrix}^2 = \begin{bmatrix}a^2-b^2& 2ab\\ -2ab & a^2-b^2 \end{bmatrix} \]
for the squaring map $\tx{SO}_2 \stackrel{(\ )^2}{\lrw} \tx{SO}_2$.

\subsubsection{Construction of the isomorphism $\tilde V \rw X$} \label{sec:const}
Choose a set of simple roots $\Delta \subset \Gamma$. The image $\Delta_V$ of $\Delta$ in $V$ is a set of generators for this group, and the relations on this set are $2v=0$ for all $v \in \Delta_V$. Let $\tilde\Delta$ be the preimage of $\Delta$ in $\tilde\Lambda$, and $\tilde\Delta_V$ be the image of $\tilde\Delta$ in $\tilde V$. Then $\tilde\Delta_V$ is a set of generators for $\tilde V$, and the relations on this set are $\tilde v^2=(-1)$ and $\tilde v\tilde w = (-1)^{\<v,w\>}\tilde w\tilde v$.

We now define a map $\phi : \tilde\Delta \rw X$. Given $\tilde\gamma \in \tilde\Delta$ we obtain a monomorphism $\eta_{\tilde\gamma} : \tx{SL}_2 \rw H_\tx{sc}$ with $\theta$-stable image that translates the action of $\theta$ on its image to the action of transpose-inverse on $\tx{SL}_2$. The fixed subgroup $\tx{SO}_2$ of this action therefore lands in $G'$.

\begin{lemma} \label{lem:conn} The preimage of $\eta_{\tilde\gamma}(\tx{SO}_2)$ in $G_\tx{sc}$ is connected. \end{lemma}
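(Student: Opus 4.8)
\emph{Plan.} Write $p\colon G_{\tx{sc}}\to G'$ for the isogeny with kernel $\{\pm 1\}$ and set $\widetilde S=p^{-1}(\eta_{\tilde\gamma}(\SO_2))$. The plan is to exclude, by means of a single auxiliary representation, the possibility that $\widetilde S$ is disconnected. Since $p$ is a central isogeny, $\widetilde S$ is a $1$-dimensional group of multiplicative type; over $k^s$ its identity component $\widetilde S^\circ$ is a $1$-dimensional torus and $p|_{\widetilde S^\circ}\colon \widetilde S^\circ\to\eta_{\tilde\gamma}(\SO_2)$ is an isogeny of degree $1$ or $2$. If the degree is $2$ then $\widetilde S=\widetilde S^\circ$ is connected; if it is $1$ then $\widetilde S$ has two components. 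So it suffices to rule out the degree-$1$ case. The first step is to compute the relevant tangent vector: using the explicit isomorphism $\mathbb{G}_m\xrightarrow{\sim}\SO_2$ recorded just above, whose differential at the identity is $\left[\begin{smallmatrix}0&i\\-i&0\end{smallmatrix}\right]=i\left[\begin{smallmatrix}0&1\\-1&0\end{smallmatrix}\right]$, together with the fact that $d\eta_{\tilde\gamma}$ carries $\left[\begin{smallmatrix}0&1\\-1&0\end{smallmatrix}\right]$ to $\pm(X_{\tilde\gamma}-X_{\tilde\gamma^{-1}})$, the canonical generator of $X_*(\eta_{\tilde\gamma}(\SO_2))$ has differential $\pm i(X_{\tilde\gamma}-X_{\tilde\gamma^{-1}})$. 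Transporting through $p$ (an isomorphism on Lie algebras since $\tx{char}\,k=0$) and keeping track of the factor $2$ produced by the squaring isogeny in the connected case, a generator $\nu\colon\mathbb{G}_m\xrightarrow{\sim}\widetilde S^\circ$ of $X_*(\widetilde S^\circ)$ has differential $\pm i(X_{\tilde\gamma}-X_{\tilde\gamma^{-1}})$ \emph{if} $\deg(p|_{\widetilde S^\circ})=1$, and $\pm 2i(X_{\tilde\gamma}-X_{\tilde\gamma^{-1}})$ \emph{if} $\deg(p|_{\widetilde S^\circ})=2$.

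The second step is to bring in a representation. Over $k^s$, pick any $\rho\colon\tilde V\to\GL(W)$ with $W\neq 0$ and $\rho(-1)=-\mathrm{id}_W$ (for instance the isotypic summand of the regular representation of the finite group $\tilde V(k^s)$ on which $-1$ acts by $-1$, which is nonzero because $-1\neq 1$ in $\tilde V$). By Proposition \ref{pro:lie_alg_rep} the map $d\pi\colon\frg\to\frgl(W)$ with $d\pi(X_{\tilde\gamma}-X_{\tilde\gamma^{-1}})=\rho(\tilde\gamma)/2$ is a Lie algebra homomorphism, and since $G_{\tx{sc}}$ is simply connected it integrates to a representation $\pi\colon G_{\tx{sc}}\to\GL(W)$. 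As $\gamma$ is a root, $q(\gamma)=\tfrac12\langle\gamma,\gamma\rangle=1$, so $\tilde\gamma^2=-1$ in $\tilde V$ and hence $\rho(\tilde\gamma)^2=\rho(-1)=-\mathrm{id}_W$; therefore $\rho(\tilde\gamma)$ is semisimple with all eigenvalues in $\{i,-i\}$, at least one of which occurs. On the other hand $\pi\circ\nu\colon\mathbb{G}_m\to\GL(W)$ is a representation of $\mathbb{G}_m$, so the image under $d\pi$ of the differential of $\nu$ is a semisimple operator whose eigenvalues are \emph{integers}.

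Finally one compares. In the hypothetical disconnected case the differential of $\nu$ is $\pm i(X_{\tilde\gamma}-X_{\tilde\gamma^{-1}})$, so $d\pi$ sends it to $\pm i\rho(\tilde\gamma)/2$, a nonzero operator all of whose eigenvalues lie in $\{+\tfrac12,-\tfrac12\}$; this contradicts the integrality just established. Hence $\deg(p|_{\widetilde S^\circ})=2$, i.e.\ $\widetilde S$ is connected. (As a sanity check, in the connected case one gets instead $\pm i\rho(\tilde\gamma)$, with eigenvalues $\pm 1$, which is harmless; this is also exactly the point at which the argument needs the \emph{simply connected} cover $G_{\tx{sc}}$ rather than $G'$, since $d\pi$ need not integrate on $G'$.) The step I expect to be the main obstacle is the bookkeeping in the first paragraph --- correctly matching the cocharacter lattices of $\eta_{\tilde\gamma}(\SO_2)$ and of $\widetilde S^\circ$ under $p$, and pinning down the normalization of $X_{\tilde\gamma}-X_{\tilde\gamma^{-1}}$ as a tangent vector --- because the entire argument hinges on the resulting factor of $2$. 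An alternative but less uniform route is to observe that $\widetilde S$ is connected if and only if the cocharacter of $\eta_{\tilde\gamma}(\SO_2)$ is not in the coroot lattice of $G'$, i.e.\ represents the nontrivial class of $\pi_1(G')\cong\bbZ/2$, and then to verify this case by case from the type-by-type description of $G'$ obtained in the proof of Proposition \ref{pro:fund}; the representation-theoretic argument above is preferable precisely because it avoids that case analysis.
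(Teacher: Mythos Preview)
Your argument is correct, and it is genuinely different from the paper's proof. The paper proceeds by first reformulating the statement as the claim that $\gamma^\vee(-1)$ has a preimage of order $4$ in $G_{\tx{sc}}$, then conjugates $T_{\tx{sc}}$ by an explicit element built from the Kostant cascade to a $\theta$-stable maximal torus $S_{\tx{sc}}$ on which $\theta$ acts by a diagram automorphism; this reduces the question to showing that a certain coroot $\alpha^\vee$ of $H_{\tx{sc}}$ does not lie in the coroot lattice of $G'$. That in turn requires an explicit description of the coroots of $G'$ inside $X_*(S_{\tx{sc}})^\theta$ (a separate lemma), together with a case split: type $A_{2n}$ is handled by hand, and the remaining simply laced types are treated uniformly via the $R_1$-type classification of restricted roots.

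Your route bypasses all of this. You use only Proposition~\ref{pro:lie_alg_rep} (which is logically prior and independent of the appendix) and the existence of some $\rho$ with $\rho(-1)=-\mathrm{id}_W$, then read off a contradiction from the integrality of $\mathbb{G}_m$-weights. The bookkeeping you worried about is fine: with $d\eta_{\tilde\gamma}\bigl[\begin{smallmatrix}0&1\\0&0\end{smallmatrix}\bigr]=X_{\tilde\gamma}$ and $d\eta_{\tilde\gamma}\bigl[\begin{smallmatrix}0&0\\1&0\end{smallmatrix}\bigr]=X_{\tilde\gamma^{-1}}$ (forced by $\theta$-compatibility), the generator of $X_*(\eta_{\tilde\gamma}(\SO_2))$ indeed has differential $i(X_{\tilde\gamma}-X_{\tilde\gamma^{-1}})$, and in the hypothetical degree-$1$ case this lifts verbatim to a generator of $X_*(\widetilde S^\circ)$, giving half-integral weights under $d\pi$. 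What your approach buys is uniformity and brevity: no change of torus, no Kostant cascade, no description of the root system of $G'$, and no type-by-type check. What the paper's approach buys is that it stays entirely inside the structure theory of $(H_{\tx{sc}},\theta)$ and does not invoke the auxiliary representation $\pi$ at this stage; it also yields, as a byproduct, the explicit location of $\gamma^\vee$ relative to the coroot lattice of $G'$, which is of independent interest. Your closing remark that the alternative is exactly a coroot-lattice/$\pi_1$ computation is on the nose: that \emph{is} the paper's method.
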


The proof of this lemma will be given in section \ref{sec:conn}. Granting this lemma, it follows from Proposition \ref{pro:fund} that there exists a unique homomorphism $\phi_{\tilde\gamma}: \tx{SO}_2 \rw G_\tx{sc}$ making the following diagram commute.
\[ \xymatrix{
\tx{SO}_2\ar[r]^{\phi_{\tilde\gamma}}\ar[d]^{(\ )^2}&G_\tx{sc}\ar[d]\\
\tx{SO}_2\ar[r]^{\eta_{\tilde\gamma}}&G'
} \]
This homomorphism is injective.
We let $\phi(\tilde\gamma) = \phi_{\tilde\gamma}\left(\begin{bmatrix} &1\\ -1 \end{bmatrix}\right)$. By the above diagram, the image of  $\phi(\tilde\gamma)$ in $G'$ is equal to $\gamma^\vee(-1)$, which shows that $\phi(\tilde\gamma) \in X$. Moreover, $\phi(\tilde\gamma)^2=\phi_{\tilde\gamma}(-1)$ is a non-trivial element of $G_\tx{sc}$ whose image in $G'$ is trivial, hence $\phi(\tilde\gamma)^2=-1$.

We thus obtain a map $\phi : \tilde\Delta \rw X$ which descends to a map $\Phi : \tilde \Delta_V \rw X$ and whose image contains a set of generators for $X$. We claim that $\Phi$ is $\Gamma_k$-equivariant. Given $\sigma \in \Gamma_k$ we have $\eta_{\sigma\tilde\gamma}=\sigma\circ\eta_{\tilde\gamma}$, and hence $\phi_{\sigma\tilde\gamma}=\sigma\circ\phi_{\tilde\gamma}$, where on the right sides of these equations $\sigma$ denotes the action of $\sigma$ on $G'$ and $G_\tx{sc}$ respectively. Thus $\phi(\sigma\tilde\gamma)=\sigma\phi(\tilde\gamma)$ for all $\tilde\gamma \in\tilde\Gamma$ and this establishes the $\Gamma_k$-equivariance of $\Phi$.

Our task is to show that $\Phi$ respects the relation $\tilde v\tilde w = (-1)^{\<v,w\>}\tilde w\tilde v$. Once this is done, it will extend to a surjective homomorphism $\Phi : \tilde V \rw X$, which will then have to be bijective because its source and target have the same cardinality. It will furthermore be $\Gamma_k$-equivariant.

\subsubsection{The isomorphism $\tx{PGL}_2 \rw \tx{SO}_3$} \label{sec:isopgl2}
Consider the adjoint action of $\tx{PGL}_2$ on its Lie-algebra $\mf{sl}_2$. Fix a 4-th root of unity $i \in k^s$ as well as an element $\sqrt{2} \in k^s$. The basis
\[ \sqrt{2}^{-1}\begin{bmatrix} 1\\& -1 \end{bmatrix}\qquad (i\sqrt{2})^{-1}\begin{bmatrix}&1\\-1\end{bmatrix}\qquad \sqrt{2}^{-1}\begin{bmatrix}&1\\1\end{bmatrix} \]
is an orthonormal basis for the symmetric bilinear form $\tr(AB)$ and provides an isomorphism $\tx{PGL}_2 \rw \tx{SO}_3$ defined over $k^s$, which is explicitly given by
\[ \begin{bmatrix}a&b\\c&d\end{bmatrix} \mapsto (ad-bc)^{-1}\begin{bmatrix}ad+bc&i(ac+bd)&bd-ac\\ -i(ab+cd)&\frac{a^2+b^2+c^2+d^2}{2}&i\frac{a^2-b^2+c^2-d^2}{2}\\ -(ab-cd)&i\frac{c^2+d^2-a^2-b^2}{2}&\frac{a^2-b^2-c^2+d^2}{2} \end{bmatrix}. \]
Its derivative $\mf{sl}_2 \rw \mf{so}_3$ is given by
\[ \begin{bmatrix} a&b\\c&d \end{bmatrix} \mapsto \begin{bmatrix} 0&i(b+c)&b-c\\-i(b+c)&0&2ia\\c-b&-2ia&0 \end{bmatrix}. \]

\subsubsection{The relation $\tilde v\tilde w=(-1)^{\<v,w\>}\tilde w\tilde v$} \label{sec:rel}
In section \ref{sec:const} we constructed a map $\Phi : \tilde\Delta_V  \rw X$. In order to show that it extends to an isomorphism $\tilde V \rw X$, it remains to check that for $\tilde v,\tilde w \in \tilde\Delta_V$ with images $v,w \in \Delta_V$ we have
\begin{equation} \label{eq:comm} \Phi(\tilde v)\Phi(\tilde w)=(-1)^{\<v,w\>}\Phi(\tilde w)\Phi(\tilde v). \end{equation}
Let $\tilde\gamma,\tilde\delta \in \tilde\Gamma$ be preimages of $\tilde v,\tilde w$, and $\gamma,\delta \in \Delta$ be their images. We have either $\<\gamma,\delta\>=0$ or $\<\gamma,\delta\>=-1$. In the first case, the cocharacters $\eta_{\tilde\gamma}$ and $\eta_{\tilde\delta}$ commute and hence their images are contained in a common maximal torus of $G'$. The preimage in $G_\tx{sc}$ of this maximal torus is a maximal torus of $G_\tx{sc}$ and contains the images of $\phi_{\tilde\gamma}$ and $\phi_{\tilde\delta}$, and we conclude that these two cocharacters also commute. This proves \eqref{eq:comm} in the case $\<\gamma,\delta\>=0$ and we are left with the case $\<\gamma,\delta\>=-1$. Then the elements $\{X_{\tilde \gamma^{\pm 1}},X_{\tilde \delta^{\pm 1}},X_{(\tilde \gamma\tilde \delta)^{\pm1}}\}$ generate a subalgebra of $\mf{h}$ isomorphic to $\mf{sl}_3$. Even more, there is a preferred embedding $\mu_{\tilde \gamma,\tilde \delta} : \mf{sl}_3 \rw \mf{h}$ given by
\[ \begin{bmatrix} 0&1\\ &0\\&&0\end{bmatrix} \mapsto X_{\tilde \gamma}\quad \begin{bmatrix} 0\\ &0&1\\&&0\end{bmatrix} \mapsto X_{\tilde \delta}\quad \begin{bmatrix} 0&&1\\ &0\\&&0\end{bmatrix} \mapsto X_{\tilde \gamma\tilde \delta}. \]
It integrates to an embedding
$\mu_{\tilde \gamma,\tilde \delta} : \tx{SL}_3 \rw H_\tx{sc}$. The embeddings $\eta_{\tilde \gamma},\eta_{\tilde \delta} : \tx{SL}_2 \rw H_\tx{sc}$ factor through $\mu_{\tilde \gamma,\tilde \delta}$ and give embeddings
\[ \tx{SO}_2 \rw \tx{SO}_3,\qquad \begin{bmatrix} a&b\\ -b&a \end{bmatrix} \mapsto \begin{bmatrix} a&b\\ -b& a\\ &&1 \end{bmatrix} \]
and
\[ \tx{SO}_2 \rw \tx{SO}_3,\qquad \begin{bmatrix} a&b\\ -b&a \end{bmatrix} \mapsto \begin{bmatrix} 1\\ &a&b\\ &-b& a \end{bmatrix}. \]
We compose these with the isomorphism $\tx{SO}_3 \rw \tx{PGL}_2$ of section \ref{sec:isopgl2}, for which we fix the elements $i,\sqrt{2} \in k^s$ as discussed there. This gives two embeddings $\tx{SO}_2 \rw \tx{PGL}_2$.

The first one is characterized by
\[ \begin{bmatrix} a&b\\ -b&a \end{bmatrix} \mapsto \begin{bmatrix} \alpha&\beta\\ \beta&\alpha \end{bmatrix} \]
where $\alpha^2+\beta^2=a$ and $2i\alpha\beta=b$. The composition of this with the squaring map on $\tx{SO}_2$ lifts to the map
\[ \tx{SO}_2 \rw \tx{SL}_2,\qquad \begin{bmatrix} a&b\\ -b&a \end{bmatrix} \mapsto \begin{bmatrix} a&b/i\\ b/i&a \end{bmatrix}. \]
The image of $\begin{bmatrix} 0&1\\ -1&0 \end{bmatrix}$ under this map is equal to $\begin{bmatrix} 0&-i\\ -i&0 \end{bmatrix}$.

The second embedding $\tx{SO}_2 \rw \tx{PGL}_2$ is given by
\[ \begin{bmatrix} a&b\\ -b&a \end{bmatrix} \mapsto \begin{bmatrix} \sqrt{a-ib}\\ &\sqrt{a-ib}^{-1}. \end{bmatrix} \]
Note that this is well-defined with an arbitrary choice of $\sqrt{a-ib}$. Its composition with the squaring map on $\tx{SO}_2$ lifts to the map
\[ \tx{SO}_2 \rw \tx{SL}_2,\qquad \begin{bmatrix} a&b\\ -b&a \end{bmatrix} \mapsto \begin{bmatrix} (a-ib)\\ &(a-ib)^{-1} \end{bmatrix}. \]
The image of $\begin{bmatrix} 0&1\\ -1&0 \end{bmatrix}$ under this map is equal to $\begin{bmatrix} -i\\ &i \end{bmatrix}$. The claim now follows from
\[ \begin{bmatrix} 0&-i\\ -i&0 \end{bmatrix} \cdot \begin{bmatrix} -i\\ &i \end{bmatrix} = - \begin{bmatrix} -i\\ &i \end{bmatrix} \cdot \begin{bmatrix} 0&-i\\ -i&0 \end{bmatrix}. \]

\subsubsection{Intertwining property of $\Phi : \tilde V \rw X$} \label{sec:equiv}
Let $\rho : \tilde V \rw \tx{GL}(W)$ be a rational representation of the finite algebraic $k$-group $\tilde V$ on a finite-dimensional $k$-vector space $W$, having the property that $\rho(-1)=-1$. Let $\pi : G_\tx{sc} \rw \tx{GL}(W)$ be the rational representation obtained from it. We want to show that $\Phi$ intertwines $\rho$ with $\pi|_X$. It is enough to show that, for $\tilde\gamma \in \tilde\Delta$ with image $\tilde v \in \tilde V$, we have the following equality in $\tx{GL}(W)(k^s)$:
\[ \pi(\Phi(\tilde v)) = \rho(\tilde v). \]
Let $\gamma \in \Delta$ be the image of $\tilde\gamma$. Choose $\delta \in \Delta$ with $\<\gamma,\delta\>=-1$ and let $\tilde\delta \in \tilde\Delta$ be a preimage. Let $\tilde w \in \tilde V$ be the image of $\tilde\delta$. Let $Q \subset \tilde V$ be the subgroup generated by $\tilde v,\tilde w$. It is isomorphic to the quaternion group.

Let $\mu_{\tilde\gamma,\tilde\delta} : \mf{sl}_3 \rw \mf{h}$ be the embedding determined by $\tilde\gamma$ and $\tilde\delta$ as in section \ref{sec:rel}. It determines an embedding $\mu_{\tilde\gamma,\tilde\delta} : \tx{SL}_3 \rw H_\tx{sc}$.

Decompose $W = \oplus_{i=1}^n W_i$ under $\rho|_Q$ into irreducible representations over $k^s$. The condition $\rho(-1)=-1$ forces all $W_i$ to be isomorphic to the unique 2-dimensional representation of $Q$. Moreover, by construction of $d\pi$, each subspace $W_i$ of $W$ is preserved by the action of $d\pi(\mu_{\tilde\gamma,\tilde\delta}(\mf{so}_3))$, hence also by the action of $\pi(\mu_{\tilde\gamma,\tilde\delta}(\tx{SO}_3))$. We can thus focus on a single $W_i$. Choosing a suitable basis for $W_i$ over $k^s$, we obtain from $\rho|_Q$ the embedding $Q \rw \tx{SL}_2(k^s)$ given by
\[ \tilde v \mapsto \begin{bmatrix}&-i\\-i\end{bmatrix}\qquad \tilde w \mapsto \begin{bmatrix}-i\\&i\end{bmatrix}\qquad \tilde v\tilde w \mapsto \begin{bmatrix} &1\\ -1\end{bmatrix}. \]
Reviewing the construction of $d\pi$, we see that the restriction to $W_i$ of $d\pi\circ\mu_{\tilde\gamma,\tilde\delta}$ provides the isomorphism $\mf{so}_3 \rw \mf{sl}_2$ given by
\[ \begin{bmatrix} 0&1\\ -1&0\\ &&0 \end{bmatrix} \mapsto \frac{1}{2}\begin{bmatrix}&-i\\-i\end{bmatrix},\quad \begin{bmatrix} 0\\&0&1\\ &-1&0 \end{bmatrix} \mapsto \frac{1}{2}\begin{bmatrix}-i\\&i\end{bmatrix},\quad \begin{bmatrix} 0&&1\\ &0\\ -1&&0 \end{bmatrix} \mapsto \frac{1}{2}\begin{bmatrix}&1\\-1\end{bmatrix}, \]
which one easily checks to be the inverse of the isomorphism of section \ref{sec:isopgl2}. Thus, the composition of the isomorphism $\tx{SL}_2 \rw \tx{Spin}_3$ of section \ref{sec:isopgl2} with the embedding $\mu_{\tilde\gamma,\tilde\delta} : \tx{Spin}_3 \rw G_\tx{sc}$ provides a representation of $\tx{SL}_2$ on $W_i$ which in the chosen basis of $W_i$ is given by the identity map $\tx{SL}_2 \rw \tx{SL}_2$. However, the discussion of section \ref{sec:rel} shows that $\Phi(\tilde v) \in G_\tx{sc}$ is the image of the element $\begin{bmatrix}&-i\\-i\end{bmatrix}$ under the composition of the isomorphism $\tx{SL}_2 \rw \tx{Spin}_3$ of section \ref{sec:isopgl2} with the embedding $\mu_{\tilde\gamma,\tilde\delta} : \tx{Spin}_3 \rw G_\tx{sc}$. We conclude that $\rho(\tilde v)$ and $\pi(\Phi(\tilde v))$ are represented by the same matrix in $\tx{SL}_2(k^s) \subset \tx{GL}(W_i)(k^s)$.

\subsubsection{Proof of Lemma \ref{lem:conn}} \label{sec:conn}
We note first that the statement of the lemma is equivalent to the claim that the preimage of $\gamma^\vee(-1)$ in $G_\tx{sc}$ has order 4. Indeed, if the preimage of $\eta_{\tilde\gamma}(\tx{SO}_2)$ in $G_\tx{sc}$ is connected, then identifying $\tx{SO}_2$ with $\mb{G}_m$ we obtain via pull-back along $\eta_{\tilde\gamma}$ the non-split extension $1 \rw \{\pm 1\} \rw \mb{G}_m \rw \mb{G}_m \rw 1$, and the element $\gamma^\vee(-1)$ corresponds to the element $-1$ of the right copy of $\mb{G}_m$, which evidently has two preimages of order $4$. On the other hand, if the preimage of $\eta_{\tilde\gamma}(\tx{SO}_2)$ in $G_\tx{sc}$ is disconnected, the the corresponding extension is the split extension $1 \rw \{\pm 1\} \rw \{\pm 1\} \times \mb{G}_m \rw \mb{G}_m \rw 1$ and the element $-1 \in \mb{G}_m$ has two lifts of order $2$.

We have the element $\tilde\gamma \in \tilde\Gamma$ and the corresponding element $\gamma \in \Gamma$. The chosen base $\Delta$ of $\Gamma$ in the discussion of section \ref{sec:const} will be unimportant. We first claim that there exists a maximal torus $S_\tx{sc} \subset H_\tx{sc}$, a Borel subgroup $C$ containing $S_\tx{sc}$, and a root $\alpha$ of $H_\tx{sc}$ with respect to $S_\tx{sc}$ such that $\theta$ preserves the pair $(S_\tx{sc},C)$ as well as the root $\alpha$ and $\gamma^\vee(-1)=\alpha^\vee(-1)$. Indeed, choose a base $\Delta$ for $\Gamma$ such that the corresponding Kostant cascade $M$ (see \cite{Kos11}) contains $\gamma$. For each $\beta \in M$, choose a preimage $\tilde\beta \in \tilde\Gamma$. Let
\[ g = \prod_{\beta \in M}\eta_{\tilde\beta}\begin{bmatrix}i/2&1\\-1/2&-i\end{bmatrix} \in H_\tx{sc}.\]
Then one checks that $S_\tx{sc} := \tx{Ad}(g)T_\tx{sc}$ is normalized by $\theta$. If we transport the action of $\theta$ on $S_\tx{sc}$ back to $T_\tx{sc}$ via the isomorphism $\tx{Ad}(g)$, we obtain the automorphism $\tx{Ad}(g^{-1}\theta(g))\circ\theta$ and one computes that $\tx{Ad}(g^{-1}\theta(g))$ acts as the product of reflections $\prod_{\beta\in M}s_\beta$, which according to \cite[Prop. 1.10]{Kos11} represents the longest element of the Weyl group with respect to the basis $\Delta$. This shows that $\tx{Ad}(g^{-1}\theta(g))\circ\theta$ preserves the basis $\Delta$. It also evidently fixes the root $\gamma$. Let $\alpha=\tx{Ad}(g)\gamma$, and let $C$ be the Borel subgroup corresponding to the basis $\tx{Ad}(g)\Delta$. Finally, $\alpha^\vee(-1)=\gamma^\vee(-1)$ follows from the fact that the element $g \in H_\tx{sc}$ centralizes $\gamma^\vee(-1) \in H_\tx{sc}$. Indeed, the image of $\eta_{\tilde\beta}$ for $\beta \in M \setminus \{\gamma\}$ centralizes the image of $\gamma^\vee$, while the image of $\eta_{\tilde\gamma}$ centralizes the element $\gamma^\vee(-1)$. The claim is proved.

We are now interested in showing that the preimage of $\alpha^\vee(-1)$ in $G_\tx{sc}$ has order $4$. For this it is convenient to use again the equivalent formulation that the preimage of $\alpha^\vee(\mb{G}_m)$ in $G_\tx{sc}$ is connected. By passing from $\gamma$ to $\alpha$ we are now in the more advantageos situation that this preimage belongs to the preimage in $G_\tx{sc}$ of $G' \cap S_\tx{sc} = S_\tx{sc}^\theta$, which is a maximal torus. Call this maximal torus $\tilde S \subset G_\tx{sc}$. We form the pull-back diagram
\[ \xymatrix{
1\ar[r]&\{\pm 1\}\ar[r]&\tilde S\ar[r]&S_\tx{sc}^\theta\ar[r]&1\\
1\ar[r]&\{\pm 1\}\ar@{=}[u]\ar[r]&?\ar[r]\ar[u]&\mb{G}_m\ar[u]^{\alpha^\vee}\ar[r]&1
} \]
and would like to show that the bottom extension is not split. Passing to character modules we obtain the push-out diagram
\[ \xymatrix{
0\ar[r]&X^*(S_\tx{sc})_\theta\ar[r]\ar[d]^{\alpha^\vee}&X^*(\tilde S)\ar[d]\ar[r]&\Z/2\Z\ar[r]\ar@{=}[d]&0\\
0\ar[r]&\Z\ar[r]&X^*(?)\ar[r]&\Z/2\Z\ar[r]&0
} \]
and would still like to show that the bottom extension is not split. This is equivalent to showing that for one, hence any, lift $\dot 1 \in X^*(?)$ of $1 \in \Z/2\Z$, we have $2\dot 1 \in \Z \sm 2\Z$. This in turn is equivalent to showing that for one, hence any, lift $\dot 1 \in X^*(\tilde S)$ of $1 \in \Z/2\Z$, we have $\alpha^\vee(2\dot 1) \notin \alpha^\vee(2X^*(S_\tx{sc})_\theta)$. Now $X^*(S_\tx{sc})$ is the weight lattice of the group $H_\tx{sc}$ with respect to the torus $S_\tx{sc}$. Since $\alpha^\vee$ is a coroot, we have $\alpha^\vee(X^*(S_\tx{sc})_\theta)=\Z$. Our task is then to show that the image in $\Q$ of $X^*(\tilde S)$ under $\alpha^\vee$ is not contained in $\Z$. But $X^*(\tilde S)$ is equal to the weight lattice of the group $G_\tx{sc}$ relative to the maximal torus $\tilde S$. We thus have to show that $\alpha^\vee \in X_*(S_\tx{sc})^\theta$ does not belong to the coroot lattice of $G'$.

To that end, we need to describe the root and coroot systems of $G'$. Let $R \subset X^*(S_\tx{sc})$ and $R^\vee \subset X_*(S_\tx{sc})$ be the root and coroot systems of $H_\tx{sc}$, and let $\Delta \subset R$ be the base given by the Borel subgroup $C$. We choose a non-zero root vector $X_\beta \in \mf{h}_\beta$ for each $\beta \in \Delta$ subject to the condition $X_{\theta\beta} = \theta X_\beta$ provided $\theta\beta \neq \beta$. For $\beta \in \Delta$ satisfying $\theta\beta = \beta$ we have $\theta X_\beta = \epsilon X_\beta$ with $\epsilon \in \{1,-1\}$. Letting $\{\check\omega_\beta|\beta \in \Delta\}$ be the system of fundamental coweights, we set $s \in S$ to be the product of $\check\omega_\beta(-1)$ for all $\beta \in \Delta$ with $\theta\beta = \beta$ and $\theta X_\beta = -X_\beta$. Then $s \in S^\theta$ is of order $2$ and $\theta = \tx{Ad}(s)\theta_0$, with $\theta_0$ an automorphism of $H_\tx{sc}$ preserving the splitting $(S_\tx{sc},C,\{X_\beta\})$. The root system of $G'$ is a subset $R' \subset X^*(S_\tx{sc}^\theta) = X^*(S_\tx{sc})_\theta$. The duality between $X^*(S_\tx{sc})$ and $X_*(S_\tx{sc})$ induces a duality between $X^*(S_\tx{sc})_\theta$ and $X_*(S_\tx{sc})^\theta$. The coroot system of $G'$ is a subset $R'^\vee \subset X_*(S_\tx{sc})^\theta$. The system $R' \subset X^*(S_\tx{sc})_\theta$ and its dual system $R'^\vee \subset X_*(S_\tx{sc})^\theta$ can be described using the results of \cite{Ste68}, which are summarized in \cite[\S1.1,\S1.3]{KS99}. As evident from the discussion there, the root system $A_{2n}$ behaves differently from all other root systems, a phenomenon that manifests itself in the occurrence of restricted roots of type $R_2$ and $R_3$. It is therefore convenient to treat the special case of $A_{2n}$ separately. Fortunately, this special case is rather easy.

Assuming that $R$ is of type $A_{2n}$, we enumerate $\Delta=\{\alpha_1,\dots,\alpha_{2n}\}$ with $\theta(\alpha_i)=\alpha_{2n+1-i}$. Since $\theta$ has no fixed points in $\Delta$, we have $\theta_0=\theta$. Thus the projection of $\Delta$ to $X^*(S_\tx{sc})_\theta$ forms a set of simple roots for $R'$. Let $\alpha_i' \in R'$ denote the projection of $\alpha_i$. Then $\alpha_1',\dots,\alpha_{n-1}'$ are of type $R_1$, and the corresponding coroots are given by $\alpha_i'^\vee = \alpha_i^\vee+\alpha_{2n+1-i}^\vee$. On the other hand $\alpha_n'$ is of type $R_2$ and its coroot is given by $2(\alpha_n^\vee+\alpha_{n+1}^\vee)$. It follows that the coroot lattice of $G'$ is the sublattice of $X_*(S_\tx{sc})^\theta$ spanned by the points $\{\alpha_1^\vee+\alpha_{2n}^\vee,\dots,\alpha_{n-1}^\vee+\alpha_{n+2}^\vee,2(\alpha_n^\vee+\alpha_{n+1}^\vee)\}$. On the other hand, we may assume without loss of generality that $\alpha$ is the highest root of $R$ (by making the same assumption on the root $\gamma$, bearing in mind that the highest root is always part of the Kostant cascade). Then $\alpha^\vee = \alpha_1^\vee+\dots+\alpha_{2n}^\vee$ evidently does not belong to the coroot lattice of $G'$. This completes the discussion of the case $A_{2n}$.

The remaining root systems can now be treated uniformly, because all occurring restricted roots are of type $R_1$. According to the discussion in \cite[\S1.3]{KS99}, the root system $R'$ is given by the image of the set
\[ \dot R' = \{\beta \in R| \theta\beta = \beta \Rightarrow \beta(s)=1 \} \]
under the natural projection $X^*(S_\tx{sc}) \rw X^*(S_\tx{sc})_\theta$. For the description of $R'^\vee$, we have the following lemma.
\begin{lemma} For any element of $\beta' \in R'$ represented by $\beta \in \dot R'$, the coroot $\beta'^\vee \in X_*(S_\tx{sc})^\theta$ is given by
\[ \begin{cases}
\beta^\vee&,\theta\beta = \beta \\
\beta^\vee+\theta\beta^\vee&,\theta\beta\neq\beta
\end{cases} \]
\end{lemma}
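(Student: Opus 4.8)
The plan is to compute the root datum of $G'$ relative to its maximal torus $\tilde S := (S_\tx{sc}^\theta)^\circ$ directly and read off the coroots. First I would set up the bookkeeping. Since $\theta = \tx{Ad}(s)\theta_0$ with $s \in S_\tx{sc}$, the automorphisms $\theta$ and $\theta_0$ agree on $S_\tx{sc}$, hence induce the same action on $X^*(S_\tx{sc})$ and on $X_*(S_\tx{sc})$; by Steinberg's theorem (applied to the $\theta$-stable pair $(S_\tx{sc}, C)$) the group $\tilde S$ is a maximal torus of $G'$, with $X_*(\tilde S) = X_*(S_\tx{sc})^\theta$ and $X^*(\tilde S) = X^*(S_\tx{sc})_\theta$ (torsion-free since we have excluded $A_{2n}$), the pairing between these being induced from the perfect pairing on $X^*(S_\tx{sc}) \times X_*(S_\tx{sc})$. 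For $\beta \in \dot R'$ with image $\beta' \in X^*(\tilde S)$, put $c_\beta = \beta^\vee$ when $\theta\beta = \beta$ and $c_\beta = \beta^\vee + \theta\beta^\vee$ when $\theta\beta \neq \beta$; in both cases $c_\beta$ is manifestly $\theta$-invariant, so $c_\beta \in X_*(\tilde S)$, and — using that $\beta'$ is of type $R_1$, i.e.\ $\beta + \theta\beta \notin R$, so that $\<\beta, \theta\beta^\vee\> = 0$ in the second case — a one-line computation gives $\<\beta', c_\beta\> = 2$.

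Next I would show that $c_\beta$ is the coroot of $\beta'$. Let $w_\beta \in W(H_\tx{sc}, S_\tx{sc})$ be $s_\beta$ in the first case and $s_\beta s_{\theta\beta}$ (which equals $s_{\theta\beta} s_\beta$, as $\<\beta, \theta\beta^\vee\> = 0$) in the second; in either case $w_\beta$ commutes with $\theta$ and therefore descends to an automorphism of $X^*(S_\tx{sc})_\theta = X^*(\tilde S)$. Applying the reflection formula $s_\gamma(\mu) = \mu - \<\mu, \gamma^\vee\>\gamma$ and using that $\beta$ and $\theta\beta$ have the same image $\beta'$ in the coinvariants, I would check that the induced map on $X^*(\tilde S)$ is exactly $\mu \mapsto \mu - \<\mu, c_\beta\>\beta'$. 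It then remains to see that this map lies in $W(G', \tilde S)$: the defining condition $\beta \in \dot R'$ ensures that a suitable scalar multiple of $X_\beta$ (resp.\ of $X_\beta + \theta X_\beta$, where one uses $[X_\beta, X_{\theta\beta}] = 0$ for type $R_1$) lies in $\mf{g} = \mf{h}^\theta$ and, together with its opposite, spans an $\mf{sl}_2$-triple in $\mf{g}$; the resulting homomorphism $\tx{SL}_2 \rw G'$ has image a rank-one subgroup on which $\tilde S$ acts through $\beta'$, and a representative of its nontrivial Weyl element normalizes $\tilde S$ and acts there as $w_\beta$. Since $\beta'$ is a reduced root of $R'$ (again because we are not in type $A_{2n}$), its coroot is the unique element of $X_*(\tilde S)$ implementing $s_{\beta'}$, and we have exhibited this element to be $c_\beta$; together with $\<\beta', c_\beta\> = 2$ this is the assertion.

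The step I expect to be the real obstacle is the one just indicated: confirming that membership in $\dot R'$ genuinely produces an $\mf{sl}_2$ (or $\tx{PGL}_2$) inside $G' = H_\tx{sc}^\theta$, i.e.\ that the relevant root vectors really are $\theta$-fixed. This forces one to keep track of the signs $\epsilon_\beta$ entering the definition of $s$, hence of the action of $\theta = \tx{Ad}(s)\theta_0$ on the root spaces $\mf{h}_\beta$ and $\mf{h}_{\theta\beta}$ — which is precisely the point at which the set $\dot R'$ is engineered to single out the roots contributing to $R'$. This is the structure theory of Steinberg, organized in \cite[\S1.1, \S1.3]{KS99}; once that input is granted, the description of $R'^\vee$ on restricted roots of type $R_1$ recorded there already \emph{is} the content of the lemma, and the argument above simply makes the identification of that abstract coroot with the explicit element $c_\beta$ of $X_*(S_\tx{sc})^\theta$ via the coinvariants/invariants pairing.
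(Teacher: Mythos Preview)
Your plan is sound and the argument goes through, but it takes a genuinely different route from the paper's. The paper proves the lemma purely combinatorially via the Bourbaki characterization of the coroot: after checking $\langle\beta'^\vee,\beta'\rangle=2$, it verifies by a case analysis on pairs $\beta_1,\beta_2\in\dot R'$ (splitting according to whether each is $\theta$-fixed and according to the signs of $\langle\beta_1^\vee,\beta_2\rangle$, $\langle\theta\beta_1^\vee,\beta_2\rangle$) that the candidate reflection $s_{\beta',\beta'^\vee}$ carries $R'$ back into $R'$. Your approach instead builds the $\mf{sl}_2$-triple inside $\mf{g}$ directly from the $\theta$-fixed combinations of root vectors and reads off the coroot as its semisimple element $H_\beta$ (resp.\ $H_\beta+H_{\theta\beta}$), which bypasses the case analysis altogether; the descent of $w_\beta$ to $X^*(\tilde S)$ is then just a consistency check rather than the heart of the argument. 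The trade-off is precisely the one you identify: your route needs as input that membership in $\dot R'$ with $\theta\beta=\beta$ really forces $\theta$ to act trivially on $\mf{h}_\beta$ (equivalently, that the pinned automorphism $\theta_0$ fixes $X_\beta$ for every $\theta_0$-fixed root outside type $A_{2n}$), which is the type-$R_1$ structure theory you cite from \cite{KS99}; the paper's verification, by contrast, uses from \cite{KS99} only the combinatorial description of $R'$ and is in that sense more self-contained, at the cost of the explicit casework.
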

\begin{proof}
Since $\beta'$ is of type type $R_1$, we know that if $\theta\beta \neq \beta$ then $\theta\beta \perp\beta$. According to \cite[Chap. VI, \S1, no. 1]{Bou02}, $\beta'^\vee$ is the unique element of the dual space of $X^*(S_\tx{sc})_\theta \otimes \Q$ with the properties $\<\beta'^\vee,\beta'\>=2$ and $s_{\beta',\beta'^\vee}(R') \subset R'$, where $s_{\beta',\beta'^\vee}(x)=x-\<\beta'^\vee,x\>\beta'$ is the reflection determined by $\beta',\beta'^\vee$. We need to check that the elements given in the statement of the lemma satisfy these properties. The first property is immediate. For the second property we take $\beta_1,\beta_2 \in \dot R'$ and let $\beta_1',\beta_2' \in R'$ be their images. Let $\beta_1'^\vee \in X_*(S_\tx{sc})^\theta$ be given by the table above. We need to show that $s_{\beta_1',\beta_1'^\vee}(\beta_2') \in R'$. If $\beta_2$ is perpendicular to both $\beta_1$ and $\theta\beta_1$, or if $\beta_1' = \pm\beta_2'$, then the claim is clear. We thus assume that this is not the case.

If $\beta_1$ is fixed by $\theta$, then $s_{\beta_1',\beta_1'^\vee}(\beta_2')$ is the image of $s_{\beta_1,\beta_1^\vee}(\beta_2)$. This element of $R$ belongs to $\dot R'$, because it is fixed by $\theta$ precisely when $\beta_2$ is, and in this case it kills $s$, since both $\beta_1$ and $\beta_2$ do.

If $\beta_1$ is not fixed by $\theta$, but $\beta_2$ is, then we have $\<\beta_1^\vee+\theta\beta_1^\vee,\beta_2\>=2\<\beta_1^\vee,\beta_2\> =2\epsilon \neq 0$ and conclude that $s_{\beta_1',\beta_1'^\vee}(\beta_2')$ is the image of $\beta_2-2\epsilon\beta_1$, which coincides with the image of $\beta_2-\epsilon\beta_1 - \epsilon\theta\beta_1$. The latter element belongs to $R$, because $\beta_1 \perp \theta\beta_1$. It is furthermore fixed by $\theta$ and kills $s$, so belongs to $\dot R'$.

Now assume that both $\beta_1,\beta_2$ are not fixed by $\theta$. If $\<\beta_1^\vee,\beta_2\>$ and $\<\theta\beta_1^\vee,\beta_2\>$ are both non-zero and have opposite signs, then $s_{\beta_1',\beta_1'^\vee}(\beta_2')=\beta_2'$. If $\<\beta_1^\vee,\beta_2\>$ and $\<\theta\beta_1^\vee,\beta_2\>$ are both non-zero and have the same sign $\epsilon \in \{1,-1\}$, then $s_{\beta_1',\beta_1'^\vee}(\beta_2')$ is equal to the image of $\beta_2-2\epsilon\beta_1$, which coincides with the image of $\beta_2 -\epsilon\beta_1 -\epsilon\theta\beta_1$. As above this element belongs to $R$. It is moreover not $\theta$-fixed, thus belongs to $\dot R'$. It remains to consider the cases where exactly one of $\<\beta_1^\vee,\beta_2\>$ and $\<\theta\beta_1^\vee,\beta_2\>$ is non-zero. We will give the computation only in the case $\<\beta_1^\vee,\beta_2\>=0$, $\<\theta\beta_1^\vee,\beta_2\>=-1$, the other cases being analogous. The element $s_{\beta_1',\beta_1'^\vee}(\beta_2') \in X^*(S_\tx{sc})_\theta$ is equal to the image of $\beta_2+\beta_1 \in R$ and we claim that this element is not $\theta$-fixed. If it were, we'd have $\beta_2=\theta\beta_2+\theta\beta_1-\beta_1$ and applying $\<\theta\beta_1^\vee,-\>$ we would obtain $-1=0+2-0$.
\end{proof}

Armed with this lemma we complete the proof of Lemma \ref{lem:conn} as follows. We have the element $\alpha^\vee \in X_*(S_\tx{sc})^\theta$, which is a coroot for the group $H_\tx{sc}$. We wish to show that it does not belong to the coroot lattice for the group $G'$. Assume the contrary. Then inside of the lattice $X_*(S_\tx{sc})^\theta$ we have the equation $\alpha^\vee = \sum n_i\beta_i'^\vee$ for some integers $n_i$ and some roots $\beta_i' \in R'$. We choose for each $\beta_i'$ a lift $\beta_i \in \dot R'$ and apply the previous lemma, thereby obtaining
\[ \alpha^\vee = \sum n_i\beta_i^\vee+\sum n_i(\beta_i^\vee+\theta\beta_i^\vee), \]
where we have subdivided the set of $\{\beta_i\}$ into the cases corresponding to the statement of above lemma. This equation holds inside the coroot lattice of $H_\tx{sc}$. Since $R$ is a simply laced root system, the bijection $R \rw R^\vee,\beta \mapsto \beta^\vee$ extends to a $\Z$-linear bijection from the root lattice to the coroot lattice. This tells us that we have the equation
\[ \alpha = \sum n_i\beta_i+\sum n_i(\beta_i+\theta\beta_i) \]
in the root lattice of $H_\tx{sc}$, i.e. in $X^*(S)$. However, the right hand side is a character of $S$ which kills the element $s \in S$. This would imply that $\alpha \in \dot R'$, which would then imply that $\theta$ acts trivially on the root space $\mf{h}_\alpha$. This is however false, because for $X=\tx{Ad}(g)X_{\tilde\gamma} \in \mf{h}_\alpha$ we have
\[ \theta(X) =\tx{Ad}(g)\tx{Ad}(g^{-1}\theta(g))\theta(X_{\tilde\gamma})=\tx{Ad}(g)\tx{Ad}\eta_{\tilde\gamma}\begin{bmatrix}&-i\\-i\end{bmatrix}(-X_{\tilde\gamma^{-1}}) = -X. \]
The proof of Lemma \ref{lem:conn} is now complete.
\end{appendix}

\end{document}